\documentclass[11pt]{article}
\usepackage[utf8]{inputenc}

\usepackage[margin=1.0in]{geometry}

\usepackage{graphics,latexsym,amsfonts,epsfig,verbatim,amsmath,algorithm,algorithmic,color}
\usepackage{multirow}
\usepackage{url}
\usepackage{todonotes}
\usepackage{graphicx}
\usepackage{mathrsfs}
\usepackage{booktabs}
\usepackage{bm}
\usepackage{tikz}

\newtheorem{theorem}{Theorem}
\newtheorem{proposition}{Proposition}

\newtheorem{remark}{Remark}
\usepackage{natbib}
%\bibpunct[, ]{(}{)}{,}{a}{}{,}%
%
%
%
\def\newblock{\ }%
\usepackage{mathtools}
\usepackage{subcaption}

%\renewcommand{\baselinestretch}{1.5}

% \usepackage[parfill]{parskip} % Activate to begin paragraphs with an empty line rather than an indent

%%% PACKAGES
\usepackage{array} % for better arrays (eg matrices) in maths
\usepackage{paralist} % very flexible & customisable lists (eg. enumerate/itemize, etc.)
\usepackage{verbatim} % adds environment for commenting out blocks of text & for better verbatim
%\usepackage{subfig} % make it possible to include more than one captioned figure/table in a single float

%%% HEADERS & FOOTERS
\usepackage{fancyhdr} % This should be set AFTER setting up the page geometry
\pagestyle{fancy} % options: empty , plain , fancy
 % customise the layout...
\lhead{}\chead{}\rhead{}
\lfoot{}\cfoot{\thepage}\rfoot{}

%%% SECTION TITLE APPEARANCE
%\usepackage{sectsty}
%\allsectionsfont{\sffamily\mdseries\upshape} % (See the fntguide.pdf for font help)
% (This matches ConTeXt defaults)

%%% ToC (table of contents) APPEARANCE
%\usepackage[nottoc,notlof,notlot]{tocbibind} % Put the bibliography in the ToC
%\usepackage[titles,subfigure]{tocloft} % Alter the style of the Table of Contents
%\renewcommand{\cftsecfont}{\rmfamily\mdseries\upshape}
%\renewcommand{\cftsecpagefont}{\rmfamily\mdseries\upshape} % No bold!
%%% END Article customizations

\allowdisplaybreaks

\graphicspath{{figures/}}

\newcommand{\p}{\bm{p}}
\newcommand{\rb}{\bm{r}}
\newcommand{\ut}{\tilde{\bm{u}}}
\newcommand{\E}{\mathbb{E}}
\newcommand{\argmax}{\mathop{\rm arg~max}\limits}
\newcommand{\argmin}{\mathop{\rm arg~min}\limits}

\begin{document}
\title{Distributionally Robust Partially Observable Markov Decision Process with Moment-based Ambiguity}
	\author{Hideaki Nakao\thanks{Department of Industrial and Operations Engineering, University of Michigan at Ann Arbor, USA;}
    ~~~Ruiwei Jiang\thanks{Department of Industrial and Operations Engineering, University of Michigan at Ann Arbor, USA;}
    ~~~Siqian Shen\thanks{Corresponding author; Department of Industrial and Operations Engineering, University of Michigan at Ann Arbor, USA.
    Email: {\tt siqian@umich.edu}.}}
\date{} % Activate to display a given date or no date (if empty),
         % otherwise the current date is printed 
\maketitle

\begin{abstract}
We consider a distributionally robust Partially Observable Markov Decision Process (DR-POMDP), where the distribution of the transition-observation probabilities is unknown at the beginning of each decision period, but their realizations can be inferred using side information at the end of each period after an action being taken. We build an ambiguity set of the joint distribution using bounded moments via conic constraints and seek an optimal policy to maximize the worst-case (minimum) reward for any distribution in the set. We show that the value function of DR-POMDP is piecewise linear convex with respect to the belief state and propose a heuristic search value iteration method for obtaining lower and upper bounds of the value function. We conduct numerical studies and demonstrate the computational performance of our approach via testing instances of a dynamic epidemic control problem. Our results show that DR-POMDP can produce more robust policies under misspecified distributions of transition-observation probabilities as compared to POMDP, but has less costly solutions than robust POMDP. The DR-POMDP policies are also insensitive to varying parameter in the ambiguity set and to noise added to the true transition-observation probability values obtained at the end of each decision period.
\end{abstract}
{\bf Keywords: } Partially Observable Markov Decision Process (POMDP), distributionally robust optimization, moment-based ambiguity set, heuristic search value iteration (HSVI), epidemic control

\section{Introduction}\label{sec:introduction}
Partially Observable Markov Decision Processes (POMDPs) are useful for modeling sequential decision making problems, where a decision maker (DM) is only able to obtain partial information about the present state of a system of interest. Similar to the Markov Decision Processes (MDPs), the transition probabilities in between the states of the system depend on the current state and the action chosen by the DM. In addition, POMDPs are accompanied with a set of observation outcomes that are realized probabilistically given the DM's action and the state into which the system has transitioned. Different from MDPs where the DM is able to directly observe the current state of the system, in POMDPs the DM can only view an observation instead of the true state. Applications of POMDPs include clinical decision making, inventory control, machine repair, epidemic intervention and many more \cite{cassandra1998survey,hauskrecht2000planning,treharne2002adaptive}. 

A general objective in sequential decision making is to devise a policy of taking dynamic actions to maximize (minimize) the expected value of the cumulative reward (cost). In MDPs, the DM gains a reward (or pays a cost) for each action made on a state of the system. In POMDPs, since the DM has no access to the true state, she is uncertain about the reward (cost) received. Instead, the DM retains her belief of the present state based on past actions and observations, and anticipates an expected value of the reward (or the expected cost) based on the belief. The DM's belief is represented by a probability mass associated with each state of the system, which is a sufficient statistic of the history of past actions and observations \cite[Chapter 6.6]{kumar2015stochastic}. Since a policy is a function of the past actions and observations, this property is useful to compactly represent an increasing sequence of information.

In POMDPs, a critical assumption is that the exact transition and observation probabilities are known to the DM for each action-state combination. In practice, there may exist estimation errors about either the transition or observation probability values, to handle which, \cite{rasouli2018robust} builds an uncertainty set of probabilities and develops an exact algorithm for the problem of maximizing the expected reward in the worst-case realization of the unknown probabilities in POMDPs. We will numerically compare actions of robust POMDP (see \cite{osogami2015robust}) with decision policies of DR-POMDP and POMDP in Section~\ref{sec:comp}.

In this paper, using bounded moments, we construct an ambiguity set of the unknown joint distribution of the transition-observation probabilities, in which the true joint distribution lies with high probability. We consider a distributionally robust optimization framework of POMDPs (called DR-POMDP) to seek an optimal policy against the worst-case distribution in the ambiguity set, when realizations of the transition and observation probabilities in each decision period are generated from this distribution. Moreover, we allow transition-observation probabilities to vary in different decision periods, and assume that at the end of each period, the DM can gather side information to infer the true values of the transition-observation probabilities realized in that period, even these values were unknown to the DM when decisions were made. Admittedly, it is rather restrictive to have this assumption where the transition-observation probabilities can be observed retrospectively. However, there exist a wide range of applications where the underlying dynamics are understood and can be simulated to produce unknown parameters (i.e., transition-observation probabilities) once values of some exogenous parameters are gained after the decisions are made. For example, Mannor et al. \cite{mannor2016robust} justify the electric power system as one case where the system performance can be reliably simulated when environmental factors, such as wind and solar radiation levels, are known. %This setting makes the belief a sufficient statistic of the history of past actions and observations, and it can be justified by a wide range of applications with time-varying, ambiguous transition-observation probabilities. 
In Section \ref{sec:Example}, we provide a few examples to further illustrate and justify this assumption and in Section \ref{sec:comp}, we conduct numerical tests on dynamic epidemic control problem instances, which satisfy the assumption. 

In distributionally robust optimization (DRO), we seek solutions to optimize the worst-case objective given by possible distributions contained in an ambiguity set. Compared with robust optimization that accounts for the worst-case objective outcome given by all possible realizations of uncertain parameters in an uncertainty set, optimal solutions to DRO models are less conservative  and can be adjusted through the amount of data/information we have. 
Ref.\ \cite{delage2010distributionally} develops a moment-based ambiguity set, considering a set of distributions with an ellipsoidal condition on the mean and a conic constraint on the second-order moment, to derive tractable reformulations of several distributionally robust convex programs. Standardization of ambiguity sets via conic representable sets is proposed by \cite{wiesemann2014distributionally}. Ref.\ \cite{zymler2013distributionally} considers tractable reformulations of DR chance-constrained programs using moment-based ambiguity set. Other types of ambiguity sets used in DRO models bound the $\phi$-divergence \cite{ben2013robust, jiang2016data} or Wasserstein distance \cite{esfahani2015data, gao2016distributionally} in between possible distributions to a nominal distribution. In this paper, we also use a moment-based ambiguity set where the moment information is bounded via conic constraints. We establish the Bellman equation for DR-POMDP and prove the piecewise-linear-convex property of the value function, using which we further develop efficient computational algorithms and demonstrate the efficacy of the DR-POMDP model by testing  epidemic control problem instances with diverse parameter settings.

The remainder of the paper is organized as follows. 
In Section \ref{sec:lit}, we review the most relevant POMDP, robust MDP/POMDP, and DRO literature. In Section \ref{sec:Example}, we formally present DR-POMDP and provide a few examples to show possible applications. In Section \ref{sec:dr-or}, we formulate the Bellman equation and show that the value function is piecewise linear convex under general moment-based ambiguity sets described in \cite{yu2016distributionally}. In Section \ref{sec:approx2}, we develop an approximation algorithm for DR-POMDP based on a distributionally robust variant of the heuristic value search iteration algorithm. In Section \ref{sec:comp}, we demonstrate the computational results of solving DR-POMDP on randomly generated instances of a dynamic epidemic control problem, and compare it with POMDP and robust POMDP through different out-of-sample tests. Section \ref{sec:conclusion} concludes the paper and presents future research directions. 

\section{Literature Review}
\label{sec:lit}
Although strong modeling connections exist in between MDP and POMDP, techniques applied to solve MDP models where the states are discrete, are not directly applicable to solving POMDP since belief states are continuous. Ref.\ \cite{smallwood1973optimal} shows that the value function of POMDP is piecewise linear convex (PWLC) with respect to the belief state, and derives an exact algorithm to find an optimal policy. The exact algorithm, which keeps a set of vectors for characterizing the value function, is intractable as the search space increases exponentially over periods. Ref.\ \cite{pineau2003point} proposes a point-based value iteration (PBVI) algorithm by only keeping characterizing vectors for a subset of belief states, and thus maintains a lower bound of the true value function that aims to maximize the reward. The PBVI algorithm is polynomial in the number of states, observations, and actions, and the error induced by taking a subset of belief states is shown to be convergent if the subset  is sampled densely in the reachable set of belief states. Ref.\ \cite{smith2004heuristic} develops a heuristic search value iteration (HSVI) algorithm to derive an upper bound of the value function via finding the reachable set through %Monte Carlo 
simulation. Ref.\ \cite{smith2004heuristic} shows that HSVI is guaranteed to terminate after the gap between the upper and lower bounds converges within a certain threshold.

The research on robust MDP is motivated by possible estimation errors of transition matrices and how they may have a significant impact to the solution quality (see, e.g., \cite{abbad1992perturbation, abbad1990algorithms}). In \cite{wiesemann2013robust}, the authors show probabilistic guarantees for solutions to robust MDPs by building an uncertainty set using fully observable history. By construction, their robust  policy achieves or exceeds its worst-case performance with a certain confidence. 
Ref.\ \cite{nilim2005robust} considers robust control for a finite-state, finite-action MDP, where uncertainty on the transition matrices is described by particular uncertainty sets such as likelihood regions or entropy bounds, and the authors present a robust dynamic programming algorithm for solving the problem. Ref.\ \cite{iyengar2005robust} analyzes a robust formulation for discrete-time dynamic programming where the transition probabilities are uncertain and ambiguously known, and shows that it is equivalent to stochastic zero-sum games with perfect information. Ref.\ \cite{delage2010percentile} argues that robust MDP models may produce over-conservative solutions, as they do not incorporate the distributional information of uncertain parameters. Then \cite{xu2012distributionally} presents a distributionally robust MDP model, where the ambiguity set is characterized by a sequence of nested sets, each having a confidence level to guarantee that the true value is in the set with a certain probability. Ref.\ \cite{yu2016distributionally} generalizes the distributionally robust MDP to include multi-modal distributions and the information of mean and variance. Ref.\ \cite{yang2017convex} proposes a distributionally robust MDP model by building an ambiguity set of distributions on transition probability using a Wasserstein ball centered around a nominal distribution. The use of Wasserstein ball ambiguity set results in a Kantorovich-duality-based convex reformulation for distributionally robust MDP.

Ref.\ \cite{saghafian2018ambiguous} presents a modeling framework of ambiguous POMDP (called APOMDP), which generalizes the robust POMDP  in \cite{rasouli2018robust}. APOMDP optimizes over the $\alpha$-maxmin expected utility, resulting in a policy that can achieve the intermediate performance of the worst case and the best case in the uncertainty set of parameters. Ref.\ \cite{saghafian2018ambiguous} describes conditions under which the value function of APOMDP is PWLC. Meanwhile,  \cite{rasouli2018robust} considers a general setting of robust POMDP, where the DM may not be able to obtain the exact transition-observation probabilities even after taking actions at the end of each period. In this case, the sufficient statistic is no longer a single belief state, but a collection of belief states, and the expected reward up to the current period must be taken into account to realize a policy that is robust in terms of the entire cumulative expected reward. The authors also derive an exact algorithm for robust POMDP where the uncertainty set is discrete. Here we note that robust POMDP with a continuous uncertainty set is computationally challenging even in a very simple setting. Moreover, \cite{osogami2015robust} formulates a robust counterpart for POMDP, where the transition-observation matrix is assumed to lie in a fixed support within the probability simplex. The realized transition-observation probability values are assumed to be observable to the DM at the end of each decision period, similar to the setting in this paper. While the value function for the standard POMDP can be described by a PWLC function, the value function of the robust POMDP is not necessarily piecewise linear, as there are possibly infinitely many supporting hyperplanes. The authors derive an efficient algorithm based on PBVI to approximate the exact solution, and discusses a method to conduct a robust belief update.

\section{Problem Description}
\label{sec:Example}
Figure \ref{fig:events} depicts the sequence of events that occur during one decision period. In a distributionally robust setting, we consider another agent (the ``nature"), who chooses a distribution $\mu$ of the transition-observation probabilities from a pre-assumed ambiguity set. The DM expects that the nature may access to the same information as the DM and acts adversarially against the DM's action $a$ taken at the beginning of each period. Therefore, the distribution $\mu$ is expected to lead to the worst-case  expected reward. Next, the joint transition-observation probability $\p$ is realized from the distribution $\mu$. The state makes a transition according to $\p$, and the observation outcome $z$ is shown. Finally, the DM obtains the values of $z$ and $\p$ at the end of the period. 

\begin{figure}[htbp!]
%\begin{center}
%\resizebox{\textwidth}{!}{
%\begin{tikzpicture}
%\draw[->,line width=0.4mm] (-0.5,0) -- (12.5,0);
%\node[label=below:$t$] (t) at (0,0) {};
%\node[] (a) at (2,0) {};
%\node[] (m) at (4,0) {};
%\node[] (p) at (6,0) {};
%\node[] (z) at (8,0) {};
%\node[] (o) at (10,0) {};
%\node[label=below:$t+1$] (t1) at (12,0) {};
%\node[text width=40,text centered] at (0,2) {\small state $s^t$};
%\node[text width=60,text centered] at (2,2) {\small DM takes action $a$};
%\node[text width=40,text centered] at (4,2) {\small nature chooses $\mu$};
%\node[text width=40,text centered] at (6,2) {\small $\p$ is realized};
%\node[text width=40,text centered] at (8,2) {\small $z$ is shown};
%\node[text width=40,text centered] at (10,2) {\small DM observes $z$ and $\p$};
%\node[text width=40,text centered] at (12,2) {\small transition to $s^{t+1}$};
%
%\node[] (nt) at (0,1.5) {} edge[|->] (t);
%\node[] (na) at (2,1.5) {} edge[|->] (a);
%\node[] (nmu) at (4,1.5) {} edge[|->] (m);
%\node[] (np) at (6,1.5) {} edge[|->] (p);
%\node[] (nz) at (8,1.5) {} edge[|->] (z);
%\node[] (no) at (10,1.5) {} edge[|->] (o);
%\node[] (nt1) at (12,1.5) {} edge[|->] (t1);
%\end{tikzpicture}
%}
%\end{center}
\centering
\includegraphics[width=0.95\textwidth]{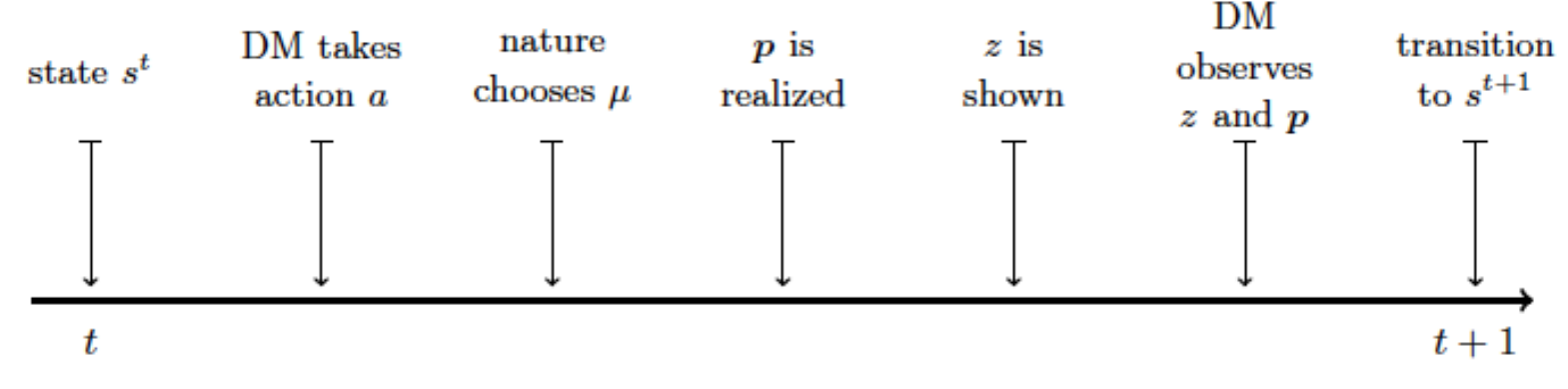}
\caption{Sequence of events during one decision period in a DR-POMDP}
\label{fig:events}
\end{figure}

We denote $\mathcal{S}$ as the set of states, $\mathcal{A}$ as the set of actions, and $\mathcal{Z}$ as the set of observation outcomes. For all $(s,s',z,a)\in\mathcal{S}^2\times\mathcal{Z}\times\mathcal{A}$, we define $p_{as}(s',z)=\mbox{Pr}(s',z|s,a)$ as the probability of transitioning between $(s,s')$ and observing $z$, given action $a$. For $(s,a)\in\mathcal{S}\times\mathcal{A}$, let $r_{as}$ be the reward for taking action $a$ at state $s$. %The objective is to find a policy which maximizes the expected discounted cumulative reward for either finite or infinite time horizon.
%
%To model the case where we neither know the exact probabilities for the states to make transitions to another, nor the exact probability distribution for the measurement outcome, we consider a set of distributions on the transition-observation probability matrix. 
For all $s\in\mathcal{S},\ a\in\mathcal{A}$, we define a vector of probabilities $\p_{as}=\left(p_{as}(s',z),\ (s',z)\in\mathcal{S}\times\mathcal{Z}\right)^{\top}$ and assume that the Cartesian product $(\p_{as},r_{as})$ is a member of a set $\mathcal{X}_{as}\subseteq\Delta(\mathcal{S}\times\mathcal{Z})\times\mathbb{R}$, where $\Delta(\cdot)$ is a probability simplex of  set $\cdot$. We denote $\p_{a}=(p_{as}(s',z),\ (s,s',z)\in\mathcal{S}^2\times\mathcal{Z})^{\top}$ and $\rb_{a}=(r_{as},\ s\in\mathcal{S})^{\top}$ for all $a\in\mathcal{A}$. %$\p_{as}\in\mathcal{X}_{as}^{p}\subseteq\Delta(\mathcal{S}\times\mathcal{Z})$, where $\Delta(\cdot)$ is a probability simplex of the set $(\cdot)$, and $\mathcal{X}_{as}^{p}$ is a set of probability distributions .$\in\mathcal{X}_{as}^{r}\subseteq\mathbb{R}$ Let us define $\mathcal{X}_{as}=\mathcal{X}_{as}^{p}\times\mathcal{X}_{as}^{r}$. 
We assume that $(\p_{as},r_{as})$ follows a distribution $\mu_{as}$, which is unknown but is included in an ambiguity set $\mathcal{D}_{as}\subseteq\mathcal{P}(\mathcal{X}_{as})$, where $\mathcal{P}(\cdot)$ represents a set of all probability distributions with support $\cdot$. 
%Note that the support of the distribution is a set of values of the reward, and a set of joint probabilities of transition and observation. 
%Note that for a convex support of the transition-observation probability, we cannot consider an individual convex support on transition probability and observation probability, because transition-observation probability is a product of the two probabilities. 
Furthermore, the set of distributions is rectangular with respect to the set of actions $\mathcal{A}$ and the set of states $\mathcal{S}$, i.e., the overall ambiguity set is $\mathcal{D}=\bigotimes_{\substack{a\in\mathcal{A}\\s\in\mathcal{S}}}D_{as}$. This assumption is analogous to the $(s,a)$-rectangularity in \cite{wiesemann2013robust}. The above conditions increase the conservativeness of the model in general. In the online supplement \cite{online-sup} \ref{sec:arectangular}, we discuss a relaxation of the $a$-rectangularity assumption for DR-POMDP.

Below we describe several examples in which the above settings of DR-POMDP can be justified, and therefore our approach can be applied to optimize corresponding policies. The key is to justify whether the DM can obtain the true value of $\p$ using side information at the end of each decision period. In Section \ref{sec:comp}, we also numerically show that our approach can produce quite stable reward in out-of-sample simulation tests even we add noise to the true $\p$-value obtained at the end of each period and thus the assumption is relatively weak. 

First, consider dynamic epidemic surveillance and control. During a flu season, %Centers for Disease Control and Prevention reports 
the number of weekly visits of patients who show influenza-like illness (ILI) symptoms is reported to the public. The number of ILI patients divided by the total population, called the ILI rate, is frequently used to estimate the prevalence of an epidemic. For example, \cite{rath2003automated} studies a two-state MDP model (i.e., epidemic vs.\ non-epidemic) and shows that the ILI rate follows a Gaussian and an exponential distribution for the epidemic and non-epidemic state, respectively; \cite{le1999monitoring} uses ILI rate to predict influenza epidemics through a hidden Markov model. The hidden states correspond to the current epidemic level, which is unobservable to the DM due to incubation period and patient arrival latency. Different epidemic levels also cause different probabilities of the population visiting healthcare providers, which will then be reflected in ILI rate. 

Arguably, the transition probabilities and ILI rates are dependent on government control policies, such as restricting travels, stopping mass gatherings, and so on. These decisions often have to be made before knowing the true transition matrix and observation probabilities between ILI rate and the true epidemic state. The DR-POMDP seeks a policy to minimize the worst-case expected cost (e.g., the total infected count, death toll, etc.) and at the end of each decision period, side information such as humidity, antigenic evolution of the virus, and population travels in the past period can be used to infer the true transition and ILI-rate observation probabilities \cite[see, e.g.,][]{du2017evolution}. Note that the side information is not available at the beginning of each decision period when the DM takes an action, but can be collected at the end of each period.

Another example arises in clinical decision-making such as deciding prostate cancer treatment plans \cite{zhang2018partially}, where different treatment plans can probabilistically vary cancer conditions (i.e., states) of a patient. The true state of a cancer patient is hard to know but can be inferred probabilistically from belief states. Using DR-POMDP, a doctor's objective is to provide treatment and inspection as needed in order to minimize the maximum expected quality-adjusted life years for each patient under ambiguously known transition-observation probabilities. According to \cite{zhang2018partially}, the detection of prostate-specific antigen (PSA), has a varying accuracy rate depending on the patient's condition. After treatment in each period, the doctor can utilize the PSA information to infer the true transition and observation probabilities happening to the patient and update her belief to make treatment plans for the next period.

One can also consider planning production or maintaining inventory in highly seasonal industries such as agriculture \cite{treharne2002adaptive}, where system states correspond to market trends in each decision period. The trend makes a transition according to a probability mass function that is unknown to the DM and each trend is associated with a certain distribution of demand that the DM aims to satisfy. For a certain product, the market transition probability and the demand distribution are correlated with climate factors, such as temperature and precipitation, which are uncertain to the DM when she makes a production plan and thus using DR-POMDP, the goal is to minimize the maximum demand loss due to distributional ambiguity. After each period, the DM observes the realized temperature and precipitation and also the true demand, to identify the true value of $\p$. %The state makes a transition according to $\p$ and DM's production decision. 

\section{Optimal Policy for DR-POMDP}
\label{sec:dr-or}
We derive an optimal policy for DR-POMDP when the DM can obtain the value of transition-observation probability at the end of each decision period. In Section \ref{sec:DRBellman}, we formulate DR-POMDP as an optimization problem and construct the Bellman equation to derive the optimal policy. In Section \ref{sec:DRBellmanSol}, we show that the value function satisfying the Bellman equation is PWLC. Finally, in Section \ref{sec:infinte}, we consider the infinite-horizon case, and demonstrate that the value function converges under the Bellman update operation.

\subsection{Distributionally Robust Bellman Equation}\label{sec:DRBellman}
We formulate a dynamic game involving two players: The DM selects $a\in\mathcal{A}$ and then the nature selects  $\mu_a=\bigotimes_{s\in\mathcal{S}}\mu_{as}$ from the ambiguity set $D_a=\bigotimes_{s\in\mathcal{S}}\mathcal{D}_{as}$ to minimize the expected reward given the DM's action $a$. Let $a^t$, $\p_{a^t}^t$, $z^t$ be the action, transition-observation probability outcome, and observation during decision period $t$. We denote $\mathcal{H}^t$ as the set of all possible histories up to period $t$, and denote $h^t=\left(a^1,\p_{a^1}^1,z^1,\ldots,a^{t-1},\p_{a^{t-1}}^{t-1},z^{t-1}\right)$ as a history in $\mathcal{H}^t$. The DM's objective is to find an optimal policy of selecting an action $a\in\mathcal{A}$ based on the history from $t=1$ to $T$, i.e., finding the best policy $\pi=(\pi^1,\ldots,\pi^{T-1})$ with $\pi^t: \mathcal{H}^t\rightarrow\mathcal{A}$. We denote the set of all such policies as $\Pi$, and define an extended history $\tilde{h}^t=\left(a^1,\p_{a^1}^1,z^1,\ldots,a^{t-1},\p_{a^{t-1}}^{t-1},z^{t-1},a^t\right)\in\tilde{\mathcal{H}}^t$, on which the nature bases its decision for choosing $\mu_{a^t}$. The nature's objective is to find the best policy (from the nature's perspective) $\gamma=(\gamma^1,\ldots,\gamma^{T-1})$, with $\gamma^t:\tilde{\mathcal{H}}^t\rightarrow\mathcal{D}_{a^t}$ to minimize the expected reward. Similarly, we denote the set of all the nature's policies as $\Gamma$.

Rasouli and Saghafian \cite{rasouli2018robust} point out that the sufficient statistic for robust POMDP is no longer a single belief state, but a set of belief states. Moreover, they discuss that the set of belief states by itself cannot be used to construct an optimal policy since there exists uncertainty for the reward accumulated in the past, associated with each of the belief states. % inside the set of belief states. 
Because of the uncertainty in the expected reward, the DM must consider a belief state that achieves the smallest expected reward both in the past and the future, posing great challenge for optimization. We claim that a similar observation holds true for the distributionally robust case.  However, when the DM can obtain the value of transition-observation probability at the end of each decision period, the ambiguity of the belief state, as well as the expected reward diminishes and the single belief state becomes a sufficient statistic for DR-POMDP, which can also be used to characterize the optimal policy.

%The belief state is a sufficient statistic for DR-POMDP when the DM can obtain the value of transition-observation probability at the end of each decision period. 
Let the belief state in period $t$ be $(b_{s}^t,\ s\in\mathcal{S}) =\bm{b}^t\in\Delta(\mathcal{S})$. Given action $a$, transition-observation probability $\p_{a}$, and observation outcome $z$, the sufficient statistic for the history $h^{t+1}=(h^t,a,\p_{a},z)$, or the belief state in period $t+1$ is given by
\begin{align}\label{eq:bayesian}
\bm{b}^{t+1}=\bm{f}(\bm{b},a,\p_a,z)=\frac{\sum_{s\in\mathcal{S}}{\bm{J}_z}\p_{as}b_s}{\sum_{s\in\mathcal{S}}\bm{1}^{\top}{\bm{J}_z}\p_{as}b_s},
\end{align}
where $\bm{1}$ represents a vector of ones having the length $|\mathcal{S}|$; ${\bm{J}_z}\in\mathbb{R}^{|\mathcal{S}|\times(|\mathcal{S}|\times|\mathcal{Z}|)}$ is a matrix of zeros and ones that projects the vector $\p_{as}$ to a vector $\p_{asz}=\left(p_{as}(s',z),\ s'\in\mathcal{S}\right)^{\top}$, whose entries correspond to the outcome $z$. That is, $\p_{asz} = \bm{J}_z\p_{as}, \ \forall a, \ s, \ z$. Note that the belief state cannot be updated using \eqref{eq:bayesian} and will not be a sufficient statistic of the history of past actions and observations if we do not have the true values of $\p_{as}$.

With slight abuse of notation, let $\pi$ be a policy that maps belief states to the actions, i.e., $\pi^t:\ \Delta(\mathcal{S})\rightarrow\mathcal{A}$ for all $t\in\{1,\ldots,T-1\}$. Similarly, let $\gamma^t:\ \Delta(\mathcal{S})\times \mathcal{A}\rightarrow\mathcal{D}_{a^t}$ for all $t\in\{1,\ldots,T-1\}$. Note that the nature's policy is dependent on the belief state since the nature acts adversarial to the DM. 

\begin{remark}
Note that the deterministic policy is optimal since the nature is able to access to the same information as the DM, plus the action that the DM has performed. This does not hold true when the nature is not able to perfectly access to the DM's immediate action.
\end{remark}

Given the nature's choice of distribution $\mu_a$, the expected value of the instantaneous reward given belief state $\bm{b}$ and action $a$ is denoted as $\E_{(\p_a,\rb_a)\sim\mu_{a}}\left[\bm{b}^{\top}\bm{r}_a\right]$, where ``$\sim$" expresses the relation between random variables and probability distributions. Let $\beta\in(0,1]$ be a discount factor. The objective of the DM is to find a policy to maximize the minimum cumulative discounted expected reward given all possible policies (i.e., distributions of transition-observation probabilities) by the nature. That is, DR-POMDP aims to solve 
\begin{subequations}\label{eq:optimization}
\begin{align}
\max_{\pi\in\Pi}\min_{\gamma\in\Gamma}\quad&\E\left[\sum_{t=1}^{T-1}\beta^t {\bm{b}^{t}}^{\top} \rb_{a^t}^t\right]\\
\mbox{s.t.}\quad& a^t=\pi^t(\bm{b}^t), &&\forall t\in\{1,\ldots,T-1\}\\
&\mu^t_{a^t}=\gamma^t(\bm{b}^t,a^t),&&\forall t\in\{1,\ldots,T-1\}\\
&(\p^t_{a^t},\rb^t_{a^t})\sim\mu^t_{a^t},&&\forall t\in\{1,\ldots,T-1\}\\
&(s^{t+1},z^t)\sim\p^t_{a^ts^t},&&\forall t\in\{1,\ldots,T-1\}\\
&\bm{b}^{t+1}= \bm{f}(\bm{b}^t,a^t,\p_{a^t}^t,z^t),&&\forall t\in\{1,\ldots,T-1\}%\\
%&s^1 = \bm{b},
\end{align}
\end{subequations}
where the terminal reward is zero without loss of generality. The initial belief state is given as $\bm{b}$. Alternatively, we denote the problem \eqref{eq:optimization} as
\begin{align}\label{eq:simplified}
\max_{\pi\in\Pi}\min_{\gamma\in\Gamma}\E \left[\sum_{t=1}^{T-1}\beta^t{\bm{b}^{t}}^{\top} \rb_{a^t}^t\Biggr|\ \bm{b}^1=\bm{b}\right].
\end{align}
Here we omit all the constraints in \eqref{eq:optimization} for presentation simplicity.

To solve \eqref{eq:optimization}, we propose to use dynamic programming, and derive the Bellman equation below.  
\begin{proposition}\label{prop:dp}
Denote $\pi^{t:T-1}=(\pi^t,\pi^{t+1},\ldots,\pi^{T-1})$ and $\gamma^{t:T-1}=$\hfill \linebreak $(\gamma^t,\gamma^{t+1},\ldots,\gamma^{T-1})$ as sequences of policies from $t$ to $T-1$. Let  $\Pi^{t:T-1}$ and $\Gamma^{t:T-1}$ be the sets of all policies $\pi^{t:T-1}$ and $\gamma^{t:T-1}$, respectively.  Consider the value function in period $t$ as 
\begin{align}\label{eq:vf_source}
V^t(\bm{b})=\max_{\pi^{t:T-1}\in\Pi^{t:T-1}}\min_{\gamma^{t:T-1}\in\Gamma^{t:T-1}}\E\left[\sum_{n=t}^{T-1}\beta^{n-t}{\bm{b}^{n}}^{\top} \rb_{a^n}^n\Biggr|\ \bm{b}^t=\bm{b}\right].
\end{align}
Then,
\begin{align}\label{eq:vf2}
V^t(\bm{b})&=\max_{a\in\mathcal{A}}\min_{\mu_a\in\mathcal{D}_a}\E_{(\p_{a},\rb_a)\sim\mu_{a}}\Biggl[\sum_{s\in\mathcal{S}}b_s\Biggl\{r_{as}+\beta\sum_{z\in\mathcal{Z}}\bm{1}^{\top}{\bm{J}_z}\p_{as}V^{t+1}\left(\bm{f}\left(\bm{b},a,\p_a,z\right)\right)\Biggr\}\Biggr].
\end{align}
\end{proposition}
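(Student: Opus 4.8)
The plan is to prove \eqref{eq:vf2} by backward induction on $t$, with the terminal convention $V^{T}\equiv 0$ for the empty sum. Because the discussion preceding the proposition already reduces both players to policies depending on the history only through the current belief state, the classes $\Pi^{t:T-1}$ and $\Gamma^{t:T-1}$ in \eqref{eq:vf_source} consist of belief-indexed policies and the whole argument can be carried out at the level of the belief-state recursion. The base case $t=T-1$ is immediate: the sum in \eqref{eq:vf_source} is the single term $\bm{b}^{\top}\rb_{a^{T-1}}$ with $a^{T-1}=\pi^{T-1}(\bm{b})$ and $\mu_{a^{T-1}}=\gamma^{T-1}(\bm{b},a^{T-1})$ ranging freely over $\mathcal{A}$ and $\mathcal{D}_{a^{T-1}}$ (randomization of $\pi^{T-1}$ being useless since the adversary observes the realized action), so $V^{T-1}(\bm{b})=\max_{a\in\mathcal{A}}\min_{\mu_a\in\mathcal{D}_a}\E_{(\p_a,\rb_a)\sim\mu_a}[\bm{b}^{\top}\rb_a]$, which is \eqref{eq:vf2} with $V^{T}\equiv 0$.

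For the inductive step I would fix $t\le T-1$, assume \eqref{eq:vf_source} and \eqref{eq:vf2} agree at $t+1$, and peel the first stage off \eqref{eq:vf_source}. Writing $\pi^{t:T-1}=(\pi^{t},\pi^{t+1:T-1})$, $\gamma^{t:T-1}=(\gamma^{t},\gamma^{t+1:T-1})$, and setting $a=\pi^{t}(\bm{b})$, $\mu_a=\gamma^{t}(\bm{b},a)$, I split the quantity inside $\E[\cdot]$ into the period-$t$ contribution $\bm{b}^{\top}\rb_{a}$ (which the policies influence only through $\mu_a$) and the discounted remainder $\beta\sum_{n=t+1}^{T-1}\beta^{n-t-1}{\bm{b}^{n}}^{\top}\rb_{a^{n}}^{n}$; by the Remark, $\max$ over $\pi^{t}$ reduces to $\max$ over $a\in\mathcal{A}$. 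I then apply the tower property, conditioning the remainder on $(\p_a,\rb_a)$ and on the observation $z$. Since $\p_a$ is revealed at the end of the period, the Bayesian update \eqref{eq:bayesian} is available, so $\bm{b}^{t+1}=\bm{f}(\bm{b},a,\p_a,z)$ is a deterministic function of $(\bm{b},a,\p_a,z)$ and equals the conditional law of $s^{t+1}$, while $z$ occurs with conditional probability $\sum_{s\in\mathcal{S}}b_s\bm{1}^{\top}\bm{J}_z\p_{as}$ given $(\bm{b},a,\p_a)$. Consequently, conditionally on $\bm{b}^{t+1}$ the remainder has the law of the DR-POMDP restarted at $\bm{b}^{t+1}$, and its conditional expectation given the continuation policies $(\pi^{t+1:T-1},\gamma^{t+1:T-1})$ depends on the past only through $\bm{b}^{t+1}$.

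The core of the proof is then to commute the continuation optimization with the expectation over $\bm{b}^{t+1}$, i.e.\ to show, for every fixed $(a,\mu_a)$,
\[
\max_{\pi^{t+1:T-1}}\min_{\gamma^{t+1:T-1}}\E_{\bm{b}^{t+1}}\bigl[\,g(\bm{b}^{t+1};\pi^{t+1:T-1},\gamma^{t+1:T-1})\,\bigr]=\E_{\bm{b}^{t+1}}\bigl[\,V^{t+1}(\bm{b}^{t+1})\,\bigr],
\]
where $g(\bm{b}^{t+1};\cdot,\cdot)$ is the continuation return observed from $\bm{b}^{t+1}$. This rests on the decomposability of $\Pi^{t+1:T-1}$ and $\Gamma^{t+1:T-1}$ over belief states together with the induction hypothesis: because each player's continuation policy is a (measurable) selection indexed by the realized belief, and because the belief-state process is Markov so that the continuation subgame rooted at a belief has a value depending on that belief alone, a single continuation policy for each player is simultaneously optimal from every reachable $\bm{b}^{t+1}$ no matter which $\mu_a$ produced it; hence both $\max$ and $\min$ may be pulled pointwise under the integral, and no $\max$ is ever pulled across the outer $\min$ over $\mu_a$. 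After this step the bracket in \eqref{eq:vf_source} depends on the continuation only through $V^{t+1}$, so \eqref{eq:vf_source} collapses to $\max_{a\in\mathcal{A}}\min_{\mu_a\in\mathcal{D}_a}\E_{(\p_a,\rb_a)\sim\mu_a}\bigl[\bm{b}^{\top}\rb_a+\beta\sum_{z\in\mathcal{Z}}\bigl(\sum_{s\in\mathcal{S}}b_s\bm{1}^{\top}\bm{J}_z\p_{as}\bigr)V^{t+1}(\bm{f}(\bm{b},a,\p_a,z))\bigr]$; exchanging the finite sums over $s$ and $z$ and gathering terms under a single expectation gives exactly \eqref{eq:vf2}, closing the induction.

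I expect the commuting step to be the main obstacle. It is not a naive interchange of $\max$ and $\min$: it is valid only because the rectangular structure of $\mathcal{D}$ decouples nature's choice $\mu_{a^{t}}\in\mathcal{D}_{a^{t}}$ across periods so the continuation subgames are independent, and because the belief state is a genuine sufficient statistic so that per-belief optimizers splice together into globally optimal policies. Making the ``pull the operators under the integral'' argument rigorous also needs a measurable-selection statement and, implicitly, attainment of the inner $\min$, which in the general setting follows from compactness of $\mathcal{D}_a$ (guaranteed for the moment-based ambiguity sets considered here); I would discharge this by invoking the standard value-existence theorem for rectangular (robust) dynamic programs.
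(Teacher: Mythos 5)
Your proposal is correct and follows essentially the same route as the paper: isolate the period-$t$ term, condition on $(\p_a,z)$ using the observation probability $\sum_s b_s\bm{1}^{\top}\bm{J}_z\p_{as}$ and the fact that the updated belief is an information state, then interchange the continuation $\max$--$\min$ with the expectation and recognize $V^{t+1}$. The only cosmetic difference is that you wrap the argument in an explicit backward induction (which is unnecessary since \eqref{eq:vf2} already expresses $V^t$ directly in terms of $V^{t+1}$ as defined by \eqref{eq:vf_source}), and you are more explicit than the paper about why the interchange step is legitimate (rectangularity and belief sufficiency), which the paper dispatches with the remark that policies beyond period $t$ do not affect $(\p_{a^t}^t,\rb_{a^t}^t)$.
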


\begin{proof}
We first isolate the term associated with period $t$ inside the expectation of \eqref{eq:vf_source} as follows.
\begin{align*}
V^t(\bm{b})&=\max_{\pi^{t:T-1}\in\Pi^{t:T-1}}\min_{\gamma^{t:T-1}\in\Gamma^{t:T-1}} \E\left[{\bm{b}^{t}}^{\top} \rb_{a^t}^t+\beta\sum_{n=t+1}^{T-1}\beta^{n-(t+1)}{\bm{b}^{n}}^{\top} \rb_{a^n}^n\Biggr|\ \bm{b}^t=\bm{b}\right].
\end{align*}
Given $a^t=\pi^t(\bm{b}),\ \p^t_a=\p_{\pi^t(\bm{b})},\ z^t=z$, the probability of observing $z$ is 
\begin{align*}
\sum_{s\in\mathcal{S}}b_s\bm{1}^{\top}{\bm{J}_z}\p_{\pi_t(\bm{b})s}.
\end{align*}
Thus, we can calculate the expectation conditioned on the values of $a^t,\ \p_a^t,\ z^t$ in the value function as: 
\scriptsize
\begin{eqnarray*}
V^t(\bm{b})&= & \max_{\pi^{t:T-1}\in\Pi^{t:T-1}}\min_{\gamma^{t:T-1}\in\Gamma^{t:T-1}}\E_{(\p_{\pi_t(\bm{b})},\rb_{\pi_t(\bm{b})})\sim\mu_{\pi^t(\bm{b})}}\Biggl[\sum_{s\in\mathcal{S}}b_sr^t_{\pi^t(\bm{b})s}\\
& & \quad+\sum_{z\in\mathcal{Z}}\sum_{s\in\mathcal{S}}b_s\bm{1}^{\top}{\bm{J}_z}\p_{\pi_t(\bm{b})s}{\E}\Biggl[\sum_{n=t+1}^{T-1}\beta^{n-(t+1)}{\bm{b}^{n}}^{\top} \rb_{a^n}^n \Biggr|\ \bm{b}^t=\bm{b},a^t=\pi^t(\bm{b}),\p^t_a=\p_{\pi^t(\bm{b})},z^t=z\Biggr]\Biggr\}\Biggr]\\
&= &\max_{\pi^{t:T-1}\in\Pi^{t:T-1}}\min_{\gamma^{t:T-1}\in\Gamma^{t:T-1}}\E_{(\p_{\pi_t(\bm{b})},\rb_{\pi_t(\bm{b})})\sim\mu_{\pi^t(\bm{b})}}\Biggl[\sum_{s\in\mathcal{S}}b_s\Biggl\{r^t_{\pi^t(\bm{b})s}\\
& & \quad+\beta\sum_{z\in\mathcal{Z}}\bm{1}^{\top}{\bm{J}_z}\p_{\pi_t(\bm{b})s}{\E}\Biggl[\sum_{n=t+1}^{T-1}\beta^{n-(t+1)}{\bm{b}^{n}}^{\top} \rb_{a^n}^n \Biggr|\ \bm{b}^{t+1}=\bm{f}\left(\bm{b},\pi^t(\bm{b}),\p_{\pi^t(\bm{b})},z\right)\Biggr]\Biggr\}\Biggr],
\end{eqnarray*}
\normalsize
where the second equality is due to rearranging the terms and the fact that $\bm{b}$ is an information state.
Because policies beyond period $t$ do not affect $(\p_{a^t}^t,\rb_{a^t}^t)$, we have
\scriptsize
\begin{align*}
V^t(\bm{b})&=\max_{a\in\mathcal{A}}\min_{\mu_a\in\mathcal{D}_a}\E_{(\p_{a},\rb_a)\sim\mu_{a}}\Biggl[\sum_{s\in\mathcal{S}}b_s\Biggl\{r_{as}+\beta\sum_{z\in\mathcal{Z}}\bm{1}^{\top}{\bm{J}_z}\p_{as}\\
&\quad\times\max_{\pi^{t+1:T-1}\in\Pi^{t+1:T-1}}\min_{\gamma^{t+1:T-1}\in\Gamma^{t+1:T-1}}{\E}\Biggl[\sum_{n=t+1}^{T-1}\beta^{n-(t+1)}{\bm{b}^{n}}^{\top} \rb_{a^n}^n \Biggr|\ \bm{b}^{t+1}=\bm{f}\left(\bm{b},a,\p_a,z\right)\Biggr]\Biggr\}\Biggr]\\
&=\eqref{eq:vf2}.
\end{align*}
\normalsize
The final equality follows the definition of $V^{t+1}$. This completes the proof. %\qed 
\end{proof}
Following Proposition \ref{prop:dp}, the policies optimal to \eqref{eq:simplified} can be determined by recursively solving \eqref{eq:vf2} from period $T$ to $t=1$.

Now define two functions: %the functions $U^t(\bm{b},a,\mu_{a})$ and $Q^t(\bm{b},a)$ as 
\small
\begin{align}
\label{eq:U}U^t(\bm{b},a,\mu_{a})&=\E_{(\p_{a},\rb_a)\sim\mu_{a}}\Biggl[\sum_{s\in\mathcal{S}}b_s\Biggl\{r_{as}+\beta\sum_{z\in\mathcal{Z}}\bm{1}^{\top}{\bm{J}_z}\p_{as}V^{t+1}\left(\bm{f}\left(\bm{b},a,\p_a,z\right)\right)\Biggr\}\Biggr],\\
\label{eq:Q}Q^t(\bm{b},a)&=\min_{\mu_a\in\mathcal{D}_a}U^t(\bm{b},a,\mu_{a}).
\end{align}
\normalsize
The solution to the Bellman equation provides the optimal action given belief state $\bm{b}$. That is, an optimal action for the DM in period $t$ is
\begin{align*}
\argmax_{a\in\mathcal{A}}Q^t(\bm{b},a),
\end{align*}
whereas the optimal distribution chosen by the nature, under belief state $\bm{b}$ and the DM's action $a$, is 
\begin{align*}
\argmin_{\mu_a\in\mathcal{D}_a}U^t(\bm{b},a,\mu_{a}).
\end{align*}

\subsection{Properties of Distributionally Robust Bellman Equation \eqref{eq:vf2}}\label{sec:DRBellmanSol}
We consider an ambiguity set based on mean absolute deviation of transition-observation probabilities as described below. We refer the readers to the online supplement \cite{online-sup} \ref{app:generalAS} for a more general ambiguity set that can also involve ambiguity in the reward, and the mean values are on an affine manifold with conic representable support. The same property here holds for DR-POMDP with the general ambiguity set and we omit the details for presentation simplicity. 

Suppose that the expected value of the deviation of the transition-observation probability from its mean value $\bar{\p}_{as}$ is at most $\bm{c}_{as}$. Then for all $a\in\mathcal{A}$ and $s\in\mathcal{S}$, the unknown distribution $\mu_{as}$ satisfies  $\E_{\p_{as}\sim\mu_{as}}\left[|\p_{as}-\bar{\p}_{as}|\right]\leq\bm{c}_{as}$, which is reformulated as: 
\begin{align*}
&\E_{(\p_{as},\ut_{as})\sim\tilde{\mu}_{as}}\left[\ut_{as}\right]=\bm{c}_{as},\\
&\tilde{\mu}_{as}\left(
\begin{tabular}{cc}
$\ut_{as}\geq\p_{as}-\bar{\p}_{as}$, & $\bm{1}^{\top}\p_{as}=1$\\
$\ut_{as}\geq\bar{\p}_{as}-\p_{as}$, & $\p_{as}\geq 0$
\end{tabular}
\right)=1.
\end{align*}

Here, $\ut_{as}\in\mathbb{R}^{|\mathcal{S}|\times|\mathcal{Z}|}$ denotes a vector of auxiliary variables, and $\tilde{\mu}_{as}$ is a joint distribution of $(\p_{as}, \ut_{as})$. This notation is introduced to differentiate from $\mu_{as}$, which represents the true distribution of $\p_{as}$. The ambiguity set for distribution $\tilde{\mu}_{as}$ is therefore
\begin{align}\label{eq:ambiguityset}
\tilde{\mathcal{D}}_{as}=\left\{\tilde{\mu}_{as}
\begin{pmatrix}
\p_{as} \\ \ut_{as}
\end{pmatrix}
\middle\vert
\begin{array}{ll}
\E_{(\p_{as},\ut_{as})\sim\tilde{\mu}_{as}}\left[\ut_{as}\right]=\bm{c}_{as}\\
\tilde{\mu}_{as}\left(\mathcal{X}_{as}\right)=1
\end{array}
\right\}, 
\end{align}
while the support $\tilde{\mathcal{X}}_{as}$ for $(\p_{as}, \ut_{as})$ is given by 
\begin{align}\label{eq:support}
\tilde{\mathcal{X}}_{as}=\left\{
\begin{pmatrix}
\p_{as} \\ \ut_{as}
\end{pmatrix}\in
\begin{matrix}
\mathbb{R}^{|\mathcal{S}|\times|\mathcal{Z}|}_+\\
\mathbb{R}^{L}
\end{matrix}
\middle\vert\ 
\begin{matrix}
\ut_{as}\geq\p_{as}-\bar{\p}_{as}\\
\ut_{as}\geq\bar{\p}_{as}-\p_{as}\\
\bm{1}^{\top}\p_{as}=1
\end{matrix}
\right\}.
\end{align}
 
For ambiguity sets and supports respectively defined in terms of \eqref{eq:ambiguityset} and \eqref{eq:support}, we show that the value function is convex with respect to the belief state $\bm{b}$ for each decision period. 
\begin{theorem}
\label{thm:pwl}
For all $a\in\mathcal{A}$ and $s\in\mathcal{S}$, let the ambiguity set and support be \eqref{eq:ambiguityset} and \eqref{eq:support}, respectively. For all $t\in\{1,\ldots,T\}$, there exists  a set $\Lambda^t$ of slopes such that the value function can be expressed as follows.
\begin{align}\label{eq:pwlc2}
V^t(\bm{b})=\max_{\bm{\alpha}\in\Lambda^t}\bm{\alpha}^{\top}\bm{b}.
\end{align}
\end{theorem}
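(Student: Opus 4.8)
The plan is to prove the statement by backward induction on $t$, from $t=T$ down to $t=1$, following the classical Smallwood--Sondik argument for POMDPs but with the extra minimization over the nature's distribution handled by linear-programming duality. For the base case $t=T$, the terminal reward is zero, so $V^T(\bm{b})=0=\max_{\bm\alpha\in\{\bm 0\}}\bm\alpha^\top\bm b$, and we take $\Lambda^T=\{\bm 0\}$. For the inductive step, assume $V^{t+1}(\bm b)=\max_{\bm\alpha\in\Lambda^{t+1}}\bm\alpha^\top\bm b$ for a finite set $\Lambda^{t+1}$; I would then unwind the Bellman recursion \eqref{eq:vf2} and show that each of the operations applied to $V^{t+1}$ preserves the PWLC form, and in particular keeps the representation as a finite maximum of linear functions.

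The key steps, in order: First, substitute the inductive hypothesis into \eqref{eq:vf2} and use the belief-update formula \eqref{eq:bayesian}. The crucial cancellation is that $\bm 1^\top\bm J_z\p_{as}\,V^{t+1}(\bm f(\bm b,a,\p_a,z))$ becomes $\bm 1^\top\bm J_z\p_{as}\max_{\bm\alpha\in\Lambda^{t+1}}\bm\alpha^\top\frac{\sum_{s}\bm J_z\p_{as}b_s}{\sum_s\bm 1^\top\bm J_z\p_{as}b_s}$; the normalizing denominator cancels against the leading factor when we also sum over $s$, leaving an expression that is, for each fixed assignment $z\mapsto\bm\alpha_z\in\Lambda^{t+1}$, linear in $\bm b$ with coefficients that are linear (hence affine) in $(\p_a,\rb_a)$. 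Second, observe that the inner objective $U^t(\bm b,a,\mu_a)$ is therefore the expectation under $\mu_a$ of a function that is a maximum over finitely many choices $(\bm\alpha_z)_{z\in\mathcal Z}$ of a bilinear form in $(\bm b,(\p_a,\rb_a))$; pulling the maximum over observation-assignments outside the belief-weighting and using convexity of the max, one gets that $U^t(\bm b,a,\mu_a)$ is a concave (in fact linear in $\bm b$ for each fixed vertex selection) piecewise-linear-convex function of $\bm b$. Third, take the minimum over $\mu_a\in\mathcal D_a=\bigotimes_s\mathcal D_{as}$: since $\mathcal D_{as}$ is defined by the finitely many moment equalities $\E[\ut_{as}]=\bm c_{as}$ and the support constraint $\tilde\mu_{as}(\tilde{\mathcal X}_{as})=1$, the inner problem $\min_{\mu_a}U^t$ is a moment problem (an infinite-dimensional LP in $\mu_a$); by conic/LP duality it equals a finite-dimensional maximization over dual multipliers of a function that is again a finite maximum of affine functions of $\bm b$. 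Because $\tilde{\mathcal X}_{as}$ is a (bounded, since probabilities lie in the simplex and $\ut_{as}$ is squeezed between $\pm|\p_{as}-\bar\p_{as}|\le$ const) polytope, the worst-case distribution can be taken to be supported on finitely many atoms and the dual is attained, so $Q^t(\bm b,a)=\min_{\mu_a}U^t(\bm b,a,\mu_a)$ is PWLC in $\bm b$ and expressible as a finite $\max$ of linear functions. Fourth, take $\max_{a\in\mathcal A}Q^t(\bm b,a)$: the pointwise maximum of finitely many such functions is again of the same form, and collecting all the linear pieces over $a$ and over the dual solutions yields the finite slope set $\Lambda^t$.

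The main obstacle I expect is Step 3: making precise that the inner $\min_{\mu_a\in\mathcal D_a}U^t(\bm b,a,\mu_a)$ — a minimization of a PWLC (hence non-smooth, piecewise-linear) integrand over a moment-constrained ambiguity set with a product (rectangular) structure across $s$ — can be dualized cleanly and that the resulting value function still has a \emph{finite} linear-max representation with a number of pieces one can bound (even if the bound grows with the horizon). One has to verify strong duality for the moment problem (a Slater-type condition: the prescribed mean-absolute-deviation vector $\bm c_{as}$ must be achievable in the relative interior, i.e. there is a distribution on $\tilde{\mathcal X}_{as}$ strictly meeting the support inequalities with the right expected deviation), handle the rectangularity so the product min decomposes state-by-state, and argue that, because the integrand is a maximum of finitely many bilinear terms, the worst-case distribution is a finite mixture — which is what keeps $\Lambda^t$ finite. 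Once that LP-duality/finite-support argument is in place, the convexity in $\bm b$ is immediate (pointwise max/expectation/min of linear functions over a set not depending on $\bm b$), and the rest is bookkeeping of the slope vectors. I would lean on the structure of the ambiguity set in \eqref{eq:ambiguityset}--\eqref{eq:support} exactly at this point, and possibly cite the general moment-problem duality results from the references (e.g. \cite{wiesemann2014distributionally,yu2016distributionally}) to justify strong duality and finite support.
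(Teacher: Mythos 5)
Your overall route --- backward induction, substituting the belief update so the normalizing denominator cancels, dualizing the inner moment problem, and collecting slopes over actions and dual solutions --- is the same as the paper's. But there are two concrete problems. First, you claim $\Lambda^t$ is \emph{finite}, arguing that the worst-case distribution has finitely many atoms. This is a non sequitur: the optimal atoms depend on $\bm{b}$, so a finite support for each fixed $\bm{b}$ does not yield a $\bm{b}$-independent finite family of linear pieces. In the paper's construction $\Lambda^t$ is indexed by $a\in\mathcal{A}$ and by tuples $\bm{\alpha}_{az}\in\mbox{Conv}\left(\Lambda^{t+1}\right)$, a continuum; the paper explicitly warns that $\Lambda^t$ may have infinitely many elements (see the discussion immediately after Theorem~\ref{thm:pwl} and the remark on robust POMDP in Section~\ref{sec:lit}), and the theorem only asserts existence of \emph{some} (compact, hence max-attaining) slope set. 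Your induction hypothesis should therefore not assume finiteness, or the induction breaks at the first level where finiteness fails.

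Second, your justification of convexity --- ``pointwise max/expectation/min of linear functions over a set not depending on $\bm{b}$'' --- does not work as stated: a pointwise minimum of convex functions of $\bm{b}$ is not convex, and $Q^t(\bm{b},a)=\min_{\mu_a\in\mathcal{D}_a}U^t(\bm{b},a,\mu_a)$ is exactly such a minimum. The step that actually delivers the linear-max representation is a minimax exchange that you never perform: after dualizing the moment problem, the binding dual constraint involves $\min_{(\p_a,\ut_a)\in\tilde{\mathcal{X}}_a}\max_{\bm{\alpha}_{az}}(\cdots)$; replacing $\Lambda^{t+1}$ by its convex hull and invoking a minimax theorem (bilinear objective, convex compact feasible sets) turns this into $\max_{\bm{\alpha}_{az}}\min_{(\p_a,\ut_a)}(\cdots)$, and only then does the $(s,a)$-rectangularity together with $b_s\ge0$ let the inner minimum decompose state-by-state into quantities $\Xi(a,\bm{\alpha}_{az}\ \forall z\in\mathcal{Z},s)$ that do not depend on $\bm{b}$, so that $V^t(\bm{b})$ becomes a maximum over $(a,\bm{\alpha})$ of the linear functions $\sum_{s\in\mathcal{S}}b_s\,\Xi(a,\bm{\alpha}_{az}\ \forall z\in\mathcal{Z},s)$. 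You mention both duality and rectangularity as ingredients, but without the explicit min--max swap placed at this point the representation \eqref{eq:pwlc2} does not follow from your steps.
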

A detailed proof of Theorem \ref{thm:pwl} is shown in the online supplement \cite{online-sup} \ref{sec:proofs}. Following this result, having provided the values of $a$ and $\bm{\alpha}_{az}$, the inner minimization in \eqref{eq:valuefunc} can be solved efficiently using linear programming. The issue, however, is that there are possibly infinitely many elements in  $\mbox{Conv}\left(\Lambda^{t+1}\right)$, and even if there are finitely many, the number of supporting hyperplanes $\bm{\alpha}$ inside $\Lambda^t$ increases exponentially as the value functions are calculated from period $t=T$ to $t=1$. We describe in Section \ref{sec:approx2} a heuristic search value iteration (HSVI) algorithm for efficiently computing optimal policies in DR-POMDP. 

\subsection{Case of Infinite Horizon}\label{sec:infinte}
We show that the PWLC property of the value function can be extended to the case with infinite horizon. We prove the result by following the Banach fixed point theorem (see, e.g., \cite{puterman2014markov}), and show that by repeatedly updating the value function in \eqref{eq:vf2}, it converges to a unique function corresponding to the optimal value $V^*$ of the infinite-horizon DR-POMDP problem.
\begin{theorem}\label{thm:fixedpoint2}
The operator $\mathcal{L}$ defined as 
\footnotesize
\begin{align}\label{eq:bellmaninf}
\mathcal{L}V(\bm{b})=\max_{a\in\mathcal{A}}\min_{\mu_{a}\in\tilde{\mathcal{D}}_{a}}&\E_{(\p_a,\rb_a)\sim\mu_{a}}\left[\sum_{s\in\mathcal{S}}b_s\left(r_{as}+\beta\sum_{z\in\mathcal{Z}}\bm{1}^{\top}{\bm{J}_z}\p_{as}V\left(\bm{f}(\bm{b},a,\p_a,z)\right)\right)\right]
\end{align}
\normalsize
is a contraction for $0<\beta<1$.
\end{theorem}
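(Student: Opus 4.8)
The plan is to verify the hypotheses of the Banach fixed point theorem on the space $\mathcal{B}$ of bounded real‑valued functions on the belief simplex $\Delta(\mathcal{S})$, equipped with the supremum norm $\|V\|_\infty=\sup_{\bm{b}\in\Delta(\mathcal{S})}|V(\bm{b})|$, which is a complete metric space. First I would check that $\mathcal{L}$ maps $\mathcal{B}$ into itself: assuming the rewards are bounded on the supports (say $|r_{as}|\le R$, e.g.\ $\mathcal{X}_{as}$ compact), and using $\sum_{s\in\mathcal{S}}b_s=1$ together with $\sum_{z\in\mathcal{Z}}\bm{1}^{\top}\bm{J}_z\p_{as}=\bm{1}^{\top}\p_{as}=1$, the integrand defining $\mathcal{L}V(\bm{b})$ is bounded in absolute value by $R+\beta\|V\|_\infty$ uniformly over $\bm{b}$, $a$, and $\mu_a\in\tilde{\mathcal{D}}_a$. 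Taking expectations and then the inner $\min$ over $\mu_a$ and the outer $\max$ over the finite set $\mathcal{A}$ preserves this bound, so $\|\mathcal{L}V\|_\infty\le R+\beta\|V\|_\infty<\infty$. Along the way one notes that whenever the observation probability $\sum_{s}b_s\bm{1}^{\top}\bm{J}_z\p_{as}$ vanishes the corresponding summand is zero, so the (otherwise undefined) value of $\bm{f}(\bm{b},a,\p_a,z)$ there is immaterial.

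The heart of the argument is the contraction estimate. Fix $V_1,V_2\in\mathcal{B}$ and $\bm{b}\in\Delta(\mathcal{S})$, and write $U(\bm{b},a,\mu_a;V)$ for the expectation appearing in \eqref{eq:bellmaninf} with $V$ substituted. Applying the elementary inequalities $|\max_a f(a)-\max_a g(a)|\le\max_a|f(a)-g(a)|$ and $|\inf_x f(x)-\inf_x g(x)|\le\sup_x|f(x)-g(x)|$ successively to the outer maximum over $a\in\mathcal{A}$ and the inner minimum over $\mu_a\in\tilde{\mathcal{D}}_a$ gives
\[
|\mathcal{L}V_1(\bm{b})-\mathcal{L}V_2(\bm{b})|\le\max_{a\in\mathcal{A}}\ \sup_{\mu_a\in\tilde{\mathcal{D}}_a}\bigl|U(\bm{b},a,\mu_a;V_1)-U(\bm{b},a,\mu_a;V_2)\bigr|.
\]
Since the reward term is independent of $V$, it cancels in the difference, leaving $U(\bm{b},a,\mu_a;V_1)-U(\bm{b},a,\mu_a;V_2)=\beta\,\E_{(\p_a,\rb_a)\sim\mu_a}\bigl[\sum_{s}b_s\sum_{z}\bm{1}^{\top}\bm{J}_z\p_{as}\,(V_1-V_2)\bigl(\bm{f}(\bm{b},a,\p_a,z)\bigr)\bigr]$. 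Because $|(V_1-V_2)(\cdot)|\le\|V_1-V_2\|_\infty$ and the nonnegative weights $b_s\,\bm{1}^{\top}\bm{J}_z\p_{as}$ sum to $\sum_s b_s\bm{1}^{\top}\p_{as}=1$, the bracket is bounded pointwise in the sample by $\|V_1-V_2\|_\infty$; taking expectations, $|U(\bm{b},a,\mu_a;V_1)-U(\bm{b},a,\mu_a;V_2)|\le\beta\|V_1-V_2\|_\infty$ uniformly in $a$ and $\mu_a$. Plugging this into the displayed inequality and taking the supremum over $\bm{b}$ yields $\|\mathcal{L}V_1-\mathcal{L}V_2\|_\infty\le\beta\|V_1-V_2\|_\infty$, a strict contraction for $0<\beta<1$.

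I expect the only real difficulties here to be bookkeeping rather than conceptual: justifying that $\mathcal{L}$ stays inside a complete space (boundedness of rewards, and — if one additionally wants the fixed point to inherit the convex/PWLC structure of Theorem \ref{thm:pwl} — checking closedness of the relevant subspace under uniform limits, which follows since uniform limits of convex functions are convex); correctly handling the degenerate observations where $\bm{f}$ is undefined; and verifying that the $\max$–$\inf$ reduction in the first estimate is legitimate for the (possibly non‑compact) distributional index set, which it is because the two elementary inequalities hold for arbitrary index sets. With these in place, the Banach fixed point theorem delivers a unique $V^*\in\mathcal{B}$ with $\mathcal{L}V^*=V^*$ and geometric convergence $\mathcal{L}^nV\to V^*$ for every $V\in\mathcal{B}$, which one then identifies with the optimal value function of the infinite‑horizon DR‑POMDP.
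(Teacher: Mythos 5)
Your proof is correct and follows essentially the same route as the paper's: both arguments reduce the difference $\mathcal{L}V_1(\bm{b})-\mathcal{L}V_2(\bm{b})$ to a difference of the inner expectations at a common action--distribution pair (the paper by explicitly swapping in suboptimal choices $a_1^{\star}$ and $\mu_{a_1^{\star},2}^{\star}$, you by the equivalent non-expansiveness inequalities for $\max$ and $\inf$), after which the reward terms cancel and the bound $\beta\|V_1-V_2\|_\infty$ follows because the weights $b_s\bm{1}^{\top}\bm{J}_z\p_{as}$ sum to one. Your added bookkeeping (completeness of the sup-norm space, self-mapping of $\mathcal{L}$, validity of the $\inf$ inequality without attainment, and the degenerate-observation case) is sound and in fact slightly more careful than the paper's argument, which implicitly assumes the $\argmax$/$\argmin$ are attained.
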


We refer the readers to a detailed proof provided in \ref{sec:proofs} in the online supplement \cite{online-sup}. 
Theorem \ref{thm:fixedpoint2} suggests that by employing the exact algorithm discussed in the finite horizon case, starting from any initial value function, the value function $V$ converges to an optimal function $V^*$ with rate $\beta$ by iteratively performing the Bellman operator $\mathcal{L}$. Therefore, we can use the same solution approach to be discussed in Section \ref{sec:approx2} for handling both finite-horizon and infinite-horizon cases of DR-POMDP.

\section{Solution Method}
\label{sec:approx2}
We present a variant of the HSVI algorithm proposed in \cite{smith2004heuristic} (originally for solving POMDP) for efficiently computing upper and lower bounds for DR-POMDP. We maintain a set of finite number of hyperplanes $\Lambda_{\underline{V}}$, where the resulting PWLC function $\underline{V}$ bounds the true value function from below. We also maintain a set of points $\Upsilon_{\overline{V}}$ whose elements are $(\bm{b},v)$, which is a combination of a belief $\bm{b}$ and an upper bound $v$ of the true value function at the belief $\bm{b}$. Therefore, the resulting PWLC function $\overline{V}$ bounds the value function from above. The upper bound $v$ corresponding to a belief $\bm{b}$ is obtained through %Monte Carlo 
sampling. The sampling follows a greedy strategy to close the gap between the upper bound $\overline{V}$ and the lower bound $\underline{V}$ for the belief points that are reachable from the initial belief. 

\begin{algorithm}
\caption{Heuristic Search Value Iteration (HSVI)}
\label{alg:main}
\begin{algorithmic}[1]
	\STATE {\bf Input:} initial belief state $\bm{b}^0$, tolerance $\epsilon$
	\STATE {\bf Initialize:} $\overline{V}$, $\underline{V}$ (see details in Section \ref{subsubsec:init}) \label{step:initial}
	%$\Lambda_{\underline{V}},\ \Upsilon_{\overline{V}}$
	\WHILE{$\overline{V}(\bm{b}^0)-\underline{V}(\bm{b}^0)>\epsilon$ or time limit is reached}
		\STATE $DR\mbox{-}BoundExplore(\bm{b}^0,0)$ (see details in Algorithm \ref{alg:explore})\label{step:explore}
	\ENDWHILE
	\STATE {\bf Output:} $\overline{V}$, $\underline{V}$
\end{algorithmic}
\end{algorithm}

Algorithm \ref{alg:main} presents the main algorithmic steps in HSVI, where the details of Step \ref{step:explore} are later provided in Algorithm \ref{alg:explore}. During Step \ref{step:explore}, one sample path of DM, the nature's action and the observation outcomes are greedily selected, and then the bounds are updated using Bellman equations. Figure \ref{fig:vf} demonstrates how the lower bound of the value function can be described as the maximum of the lower bounding hyperplanes, and the upper bound can be described as a convex hull of the upper bounding points. Figure \ref{fig:vf2} illustrates an example of how newly discovered bounding hyperplanes and points can be used to locally update the bounds. 

\begin{figure}[htbp!]
\centering
\includegraphics[width=0.65\textwidth]{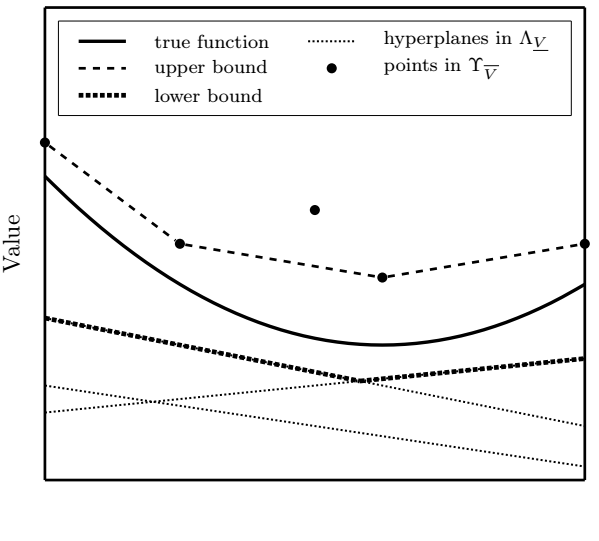}
\caption{An example of upper- and lower-bounds of a value function}
\label{fig:vf}
\end{figure}

\begin{figure}[htbp!]
\centering
\includegraphics[width=0.65\textwidth]{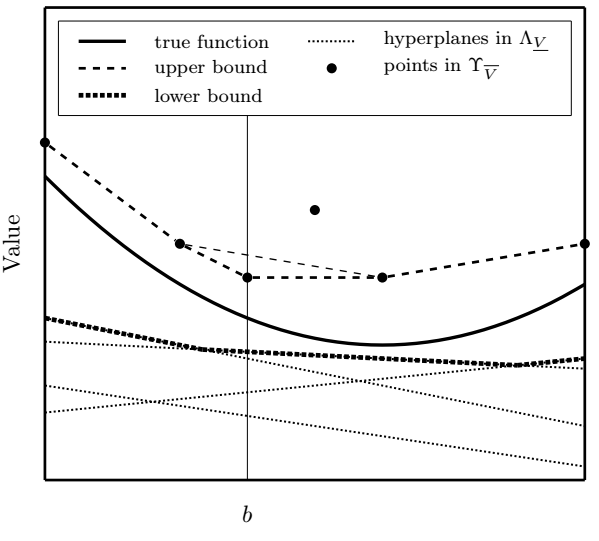}
\caption{An example of updated upper- and lower-bounds}
\label{fig:vf2}
\end{figure}

In Section \ref{subsubsec:init}, we explain how the upper and lower bounds of the value function are initialized (i.e., the details for Step \ref{step:initial}), and in Section \ref{subsubsec:heuristic}, we present an exploration strategy to close the gap to a pre-determined tolerance level. Finally, in Section \ref{subsubsec:local}, we discuss how the value functions are updated given a belief state $\bm{b}$.

\subsection{Initialization}\label{subsubsec:init}
Recall the ambiguity set and support defined in \eqref{eq:ambiguityset} and \eqref{eq:support}, respectively. In the initialization step, we compute the lower bound for the true value function by taking the best action for obtaining the worst-case expected reward in each decision period.   That is, for each action $a$, we solve
\begin{align*}
\underline{R}_a&=\sum_{t=0}^{\infty}\beta^t\min_{s\in\mathcal{S}}\min_{\mu_{as}\in\mathcal{D}_{as}}\E_{(\p_{as},r_{as})\sim\mu_{as}}\left[r_{as}\right]=\frac{1}{1-\beta}\min_{s\in\mathcal{S}}\min_{\mu_{as}\in\mathcal{D}_{as}}\E_{(\p_{as},r_{as})\sim\mu_{as}}\left[r_{as}\right].
\end{align*}
%where for each $s\in\mathcal{S}$, the inner minimization problem is solved by 
%\begin{align*}
%\min_{\p_{as},r_{as},\ut_{as}}\quad&r_{as}\\
%\mbox{s.t.}\quad& F_{as}\p_{as}+G_{as}r_{as}+H_{as}\ut_{as}=\bm{c}_{as}\\
%&B_{as}\p_{as}+C_{as}r_{as}+E_{as}\ut_{as}\preceq_{K_{as}}\bm{d}_{as},
%\end{align*}
In the case of mean absolute deviation based ambiguity set \eqref{eq:ambiguityset}, the second minimization is trivial as $r_{as}$ is fixed. The minimum value for all $s\in\mathcal{S}$ is computed by enumeration. We then define an initial lower bounding hyperplane $\alpha_s'=\max_{a\in\mathcal{A}}\underline{R}_a,\ \forall s\in\mathcal{S}$ and set $\Lambda_{\underline{V}}=\left\{\bm{\alpha}'\right\}$, where $\bm{\alpha}'=\left(\alpha_s', s\in\mathcal{S}\right)^{\top}$.

The upper bound for the true value function is obtained by considering full observability of the system and computing the MDP for the best-case scenario in the ambiguity set. Let $\bm{V}^{MDP}\in\mathbb{R}^{|\mathcal{S}|}$ be a value function for the distributionally-optimistic MDP. It satisfies
\begin{align*}
V^{MDP}_s&=\max_{a\in\mathcal{A}}\max_{\mu_{as}\in\mathcal{D}_{as}}\E_{(\p_{as},r_{as})\sim\mu_{as}}\left[r_{as}+\beta \bm{V}^{MDP\top}\sum_{z\in\mathcal{Z}}{\bm{J}_z}\p_{as}\right], && \forall s\in\mathcal{S}.
\end{align*}
To solve this, we take a linear programming approach by formulating
\footnotesize 
\begin{subequations}
\label{eq:LP:initial}
\begin{align}
    \min_{\bm{V}^{MDP}}\quad&\bm{1}^{\top}\bm{V}^{MDP}\\
    \mbox{s.t.}\quad&V^{MDP}_s\geq\max_{\mu_{as}\in\mathcal{D}_{as}}\E_{(\p_{as},r_{as})\sim\mu_{as}}\left[r_{as}+\beta \bm{V}^{MDP\top}\sum_{z\in\mathcal{Z}}{\bm{J}_z}\p_{as}\right], \ \forall a\in\mathcal{A},s\in\mathcal{S}.
\end{align}
\end{subequations}\normalsize
In the case of ambiguity set \eqref{eq:ambiguityset}, model \eqref{eq:LP:initial} becomes
\begin{subequations}
\begin{align}
\min_{\bm{\rho},\bm{\kappa}, \bm{V}^{MDP}}\quad&\bm{1}^{\top}{\bm{V}^{MDP}}\\
\mbox{s.t.}\quad&{V^{MDP}_s-}\bm{c}_{as}^{\top}\bm{\rho}_{as}-{\bar{\bm{p}}_{as}^{\top}\bm{\kappa}_{as}^1+\bar{\bm{p}}_{as}^{\top}\bm{\kappa}_{as}^2-\sigma_{as}\geq r_{as}},&&\forall s\in\mathcal{S},\ a\in\mathcal{A}\\
&\beta\sum_{z\in\mathcal{Z}}{\bm{J}_z}^{\top}{\bm{V}^{MDP}-\bm{\kappa}_{as}^1+\bm{\kappa}_{as}^2-\bm{1}\sigma_{as}}\leq 0,&&\forall s\in\mathcal{S},\ a\in\mathcal{A}\\
&{\bm{\kappa}_{as}^1+\bm{\kappa}_{as}^2-\bm{\rho}_{as}=0},&&\forall s\in\mathcal{S},\ a\in\mathcal{A}\\
&{\bm{\kappa}_{as}^1,\kappa_{as}^2\in \mathbb{R}_+^{|\mathcal{S}|\times|\mathcal{A}|},\sigma_{as}\in\mathbb{R}\ \bm{\rho}_{as}\in\mathbb{R}^{|\mathcal{S}|\times|\mathcal{A}|},}&& \forall s\in\mathcal{S},\ a\in\mathcal{A}\\
&{\bm{V}^{MDP}}\in\mathbb{R}^{|\mathcal{S}|}.
\end{align}
\end{subequations}
After the optimal solution is discovered, we initialize $\Upsilon_{\overline{V}}=\left\{\left(\bm{e}_s, V^{MDP}_s\right),\ \forall s\in\mathcal{S}\right\}$, where $\bm{e}_s$ is a column vector with 1 in the element corresponding to $s$ and zero elsewhere.
Overall, the initialization step consists of solving a polynomial number of convex optimization problems. 

To obtain $\underline{V}(\bm{b})$, we solve
\begin{align*}
\max\left\{\bm{\alpha}^{\top}\bm{b}\ |\ \forall \bm{\alpha}\in\Lambda_{\underline{V}}\right\}
\end{align*}
by enumerating all the values of $\bm{\alpha}^{\top}\bm{b}$. To obtain $\overline{V}(\bm{b})$, we consider a convex combination of points $(\bm{b}^i,v^i)\in\Upsilon_{\overline{V}}$, and find a point $(\bm{b},v)$ so that $v$ is the smallest attainable value. That is, we let $w^i$ be a weight corresponding to a point  $(\bm{b}^i,v^i)$ and solve
\small
\begin{align}\label{eq:UB}
v=\min\left\{\sum_{i\in[|\Upsilon_{\overline{V}}|]}w^iv^i\ \Biggl|\ \sum_{i\in[|\Upsilon_{\overline{V}}|]}w^i\bm{b}^i=\bm{b},\ \sum_{i\in[|\Upsilon_{\overline{V}}|]}w^i=1,\ w^i\geq 0,\ \forall i\in[|\Upsilon_{\overline{V}}|]\right\},
\end{align}
\normalsize
where $[N]$ denotes the set $\left\{1,\ldots,N\right\}$ for some integer $N$.

%We refer the readers to the online supplement \ref{app:upper-lower-HSVI} for an illustration of the lower and upper bounds for a specific DR-POMDP value function computed for  a two-state system. 

\subsection{Forward Exploration Heuristics}
\label{subsubsec:heuristic} 
The forward heuristics follow from the HSVI algorithm from \cite{smith2004heuristic}, where the selection of a suboptimal action leads to lowering the upper bound of the value function, eventually being replaced by another action having higher upper bound. Then, the scenario of the observation is chosen such that the expected value of the gap is the highest in the child node. This process is repeated until the discounted value of the gap is smaller than a tolerance. 
The algorithmic steps described in this section are based on a greedy %Monte Carlo simulation 
sampling strategy to close the gap between the upper and lower bounds of the value function. Samples in the simulation are branched by the DM's actions $a$, the nature's distribution choices $\mu_a$, and their outcomes $z$ and $\p_a$. 

We consider the following function: 
\begin{align*}
U_{V}(\bm{b},a,\mu_{a})&=\E_{(\p_{a},\rb_a)\sim\mu_{a}}\Biggl[\sum_{s\in\mathcal{S}}b_s\Biggl\{r_{as}+\beta\sum_{z\in\mathcal{Z}}\bm{1}^{\top}{\bm{J}_z}\p_{as}V\left(\bm{f}\left(\bm{b},a,\p_a,z\right)\right)\Biggr\}\Biggr].
\end{align*}
We can obtain $U_{\overline{V}}$ and $U_{\underline{V}}$ by letting $V = \overline{V}$ and $V = \underline{V}$, respectively. 

First, we select the DM and nature's decision pair $(a^*,\mu_{a^*}^*)$. The gap between $U_{\overline{V}}$ and $U_{\underline{V}}$ at belief state $\bm{b}$ is
\begin{small}
\begin{align}
    U_{\overline{V}}&(\bm{b},a^{*},\mu_{a^*}^*)-U_{\underline{V}}(\bm{b},a^{*},\mu_{a^*}^*)\nonumber\\
    =\quad&\E_{(\p_{a^*},\rb_{a^*})\sim\mu_{a^*}}\left[\sum_{s\in\mathcal{S}}b_s\left(r_{a^*s}+\beta\sum_{z\in\mathcal{Z}}\bm{1}^{\top}{\bm{J}_z}\p_{a^*s}\overline{V}\left(\bm{f}(\bm{b},a^*,\p_{a^*},z)\right)\right)\right]\nonumber\\
    & \hspace{20pt} - \E_{(\p_{a^*},\rb_{a^*})\sim\mu_{a^*}}\left[\sum_{s\in\mathcal{S}}b_s\left(r_{a^*s}+\beta\sum_{z\in\mathcal{Z}}\bm{1}^{\top}{\bm{J}_z}\p_{a^*s}\underline{V}\left(\bm{f}(\bm{b},a^*,\p_{a^*},z)\right)\right)\right]\nonumber\\
    =\quad&\beta\E_{(\p_{a^*},\rb_{a^*})\sim\mu_{a^*}^*}\left[\sum_{s\in\mathcal{S}}b_s\sum_{z\in\mathcal{Z}}\bm{1}^{\top}{\bm{J}_z}\p_{a^*s}\left(\overline{V}\left(\bm{f}(\bm{b},a^*,\p_{a^*},z)\right)-\underline{V}\left(\bm{f}(\bm{b},a^*,\p_{a^*},z)\right)\right)\right].\label{eq:u-bound}
\end{align}
\end{small}
Here we describe a greedy strategy to select the branches. For a given action $a$, we define $\mu_a^*=\mbox{argmin}_{\mu_{a}\in\tilde{\mathcal{D}}_{a}}U_{\underline{V}}(\bm{b},a,\mu_{a})$. Then, we let $a^* = \mbox{argmax}_{a\in\mathcal{A}}U_{\overline{V}}(\bm{b},a,\mu_{a}^*)$. We therefore have
\begin{align}
    \overline{V}(\bm{b})-\underline{V}(\bm{b}) &= \max_{a\in\mathcal{A}}\min_{\mu_a\in\tilde{\mathcal{D}}_a}U_{\overline{V}}(\bm{b},a,\mu_a)-\max_{a\in\mathcal{A}}\min_{\mu_a\in\tilde{\mathcal{D}}_a}U_{\underline{V}}(\bm{b},a,\mu_a)\nonumber\\
    &\leq\max_{a\in\mathcal{A}}U_{\overline{V}}(\bm{b},a,\mu_a^*)-\max_{a\in\mathcal{A}}U_{\underline{V}}(\bm{b},a,\mu_a^*)\nonumber\\
    &\leq U_{\overline{V}}(\bm{b},a^{*},\mu_{a^*}^*)-U_{\underline{V}}(\bm{b},a^{*},\mu_{a^*}^*).\label{eq:v-u}
\end{align}
This greedy strategy ensures that a suboptimal decision pair $(a^*,\mu_{a^*}^*)$ gets replaced by better ones as updating the value functions reduces the gap.

To achieve the gap $\epsilon$ at the initial state $\bm{b}_0$, the condition for the gap at depth level $t$ starting from the initial one is only $\epsilon\beta^{-t}$, which can readily be seen from \eqref{eq:u-bound} and \eqref{eq:v-u}. We define the difference of the gap and the required condition as the excess uncertainty, which is
\begin{align*}
\mbox{excess}(\bm{b},t)=\overline{V}\left(\bm{b}\right)-\underline{V}\left(\bm{b}\right)-\epsilon\beta^{-t}.
\end{align*}
%Recall that action $a^*$ maximizes $Q_{\overline{V}}(\bm{b},a)$ for all $a\in\mathcal{A}$, but it does not necessarily maximize $Q_{\underline{V}}(\bm{b},a)$. Therefore, \begin{align*}
%\overline{V}(\bm{b})-\underline{V}(\bm{b}) \leq Q_{\overline{V}}(\bm{b},a^{*})-Q_{\underline{V}}(\bm{b},a^{*}).
%\end{align*}
Using \eqref{eq:v-u} and applying the identity \eqref{eq:u-bound}, we have
\small
\begin{align}\label{eq:excess}
\mbox{excess}(\bm{b},t) \leq\beta\E_{(\p_{a^*},\rb_{a^*})\sim\mu_{a^*}^*}\left[\sum_{s\in\mathcal{S}}b_s\sum_{z\in\mathcal{Z}}\bm{1}^{\top}{\bm{J}_z}\p_{a^*s}\mbox{excess}(\bm{f}(\bm{b},a^*,\p_{a^*},z),t+1)\right].
\end{align}
\normalsize
Next, we greedily choose $(z^*, p_{a^*}^*)$ so that the quantity associated to the pair in right-hand side of \eqref{eq:excess} has the maximum expected value, i.e., 
\small
\begin{align}
(z^*,\p_{a^*}^*) \in\argmax_{z\in\mathcal{Z},\ \p_{a^*}\in\mathcal{X}_{a^*}}\mu_{a^*}^*(\p_{a^*})\times\sum_{s\in\mathcal{S}}b_s\bm{1}^{\top}{\bm{J}_z}\p_{a^*s}^*\mbox{excess}(\bm{f}(\bm{b},a^*,\p_{a^*},z),t+1).
\end{align}
\normalsize
Note that because the worst-case distribution under ambiguity set \eqref{eq:ambiguityset} is a point mass distribution, obtaining $\p_{a^*}^*$ is trivial. Algorithm \ref{alg:explore} describes the detailed algorithmic steps. In the HSVI approach, Algorithm \ref{alg:explore} is called recursively to make decisions on which branch to choose in the next depth level $t+1$. After the simulation is terminated, the updates on the lower and upper bounds are made for the belief states that are discovered through the simulation.

\begin{algorithm}
\caption{DR-BoundExplore$(\bm{b},t)$}
\label{alg:explore}
\begin{algorithmic}[1]
	\STATE {\bf Input:} belief state $\bm{b}$, depth level $t$
	\IF{$\overline{V}(\bm{b})-\underline{V}(\bm{b})>\epsilon\beta^{-t}$}
	    \STATE ($\mu_{a}^*,\ \forall a \in \mathcal{A})\gets\mbox{argmin}_{\mu_{a}\in\mathcal{D}_{a}}U_{\underline{V}}(\bm{b},a,\mu_{a})$
		\STATE $a^*\gets\mbox{argmax}_{a\in\mathcal{A}}U_{\overline{V}}(\bm{b},a,\mu_{a}^*)$
		
		\STATE $z^*,\p_{a^*}^*$ $\gets\mbox{argmax}_{z\in\mathcal{Z},\ \p_{a^*}\in\mathcal{X}_{a^*}}\mu_{a^*}^*(\p_{a^*})\times\sum_{s\in\mathcal{S}}b_s\bm{1}^{\top}{\bm{J}_z}\p_{a^*s}^*\times\mbox{excess}(\bm{f}(\bm{b},a^*,\p_{a^*},z),t+1)$
		\STATE $DR\mbox{-}BoundExplore(\bm{f}(\bm{b},a^*,\p_{a^*}^*,z^*),t+1)$
		\STATE $\Lambda_{\underline{V}}\gets \Lambda_{\underline{V}}\cup DR\mbox{-}backup(\bm{b},\Lambda_{\underline{V}})$ (see the details in Algorithm \ref{alg:backup})
		\STATE $\Upsilon_{\overline{V}}\gets \Upsilon_{\overline{V}}\cup DR\mbox{-}update(\bm{b},\Upsilon_{\overline{V}})$ (see the details in Algorithm \ref{alg:update})	
	\ENDIF
\end{algorithmic}
\end{algorithm}

\subsection{Local Updates}
\label{subsubsec:local}
In this section, we describe the details of $DR\mbox{-}backup$ and $DR\mbox{-}update$ steps in Algorithm \ref{alg:explore}. We first illustrate how the lower bound is updated in $DR\mbox{-}backup$. For each $a\in\mathcal{A}$, we solve the two inner maximization problems in \eqref{eq:maxproblem} provided $a$ and $\bm{b}$, where we set $\Lambda^{t+1} = \Lambda_{\underline{V}}$. The convex hull of $\Lambda_{\underline{V}}$ is therefore,
\begin{align}\label{eq:convLambda}
\mbox{Conv}\left(\Lambda_{\underline{V}}\right)=\left\{\sum_{i\in[|\Lambda_{\underline{V}}|]}w^i\bm{\alpha}^{i}\Biggr|\ \sum_{i\in[|\Lambda_{\underline{V}}|]}w^i=1,\ \bm{\alpha}^{i}\in\Lambda_{\underline{V}},\ w^i\geq0,\ i\in[|\Lambda_{\underline{V}}|]\right\}.
\end{align}
Thus, we combine the two inner maximization problems in \eqref{eq:maxproblem} as
\footnotesize
\begin{subequations}\label{eq:LBprob}
\begin{align}
\max_{\bm{\rho}_a, \bm{\kappa}_{a}^1,\bm{\kappa}_{a}^2,\bm{\sigma}_{a}} \quad&{\sum_{s\in\mathcal{S}}\bm{c}_{as}^{\top}\bm{\rho}_{as}+\sum_{s\in\mathcal{S}}b_s r_{as}+\sum_{s\in\mathcal{S}}\left(-\bar{p}_{as}^\top\bm{\kappa}_{as}^1+\bar{p}_{as}^\top\bm{\kappa}_{as}^2+\sigma_{as}\right)}\\
\label{eq:LBphat}\mbox{s.t.}\quad&{\beta b_{s}\sum_{z\in\mathcal{Z}}\sum_{i\in[|\Lambda_{\underline{V}}|]}w_{az}^i{\bm{J}_z}^{\top}\bm{\alpha}_{az}^i+\bm{\kappa}_{as}^1-\bm{\kappa}_{as}^2-\bm{1}\sigma_{as}\geq0},&&\forall s\in\mathcal{S}\\
&\sum_{i\in[|\Lambda_{\underline{V}}|]}w_{az}^i=1,&&\forall z\in\mathcal{Z}\\
&w_{az}^i\in\mathbb{R}_+, &&\forall i\in[|\Lambda_{\underline{V}}|],\ z\in\mathcal{Z}\\
&\eqref{eq:dualut},\eqref{eq:dualvar},\eqref{eq:dualrho}.\nonumber
\end{align}
\end{subequations}
\normalsize 
We denote the optimal solutions to \eqref{eq:LBprob} using a superscript $\star$, and let the optimal dual solutions associated with constraints \eqref{eq:LBphat} %and \eqref{eq:dualrhat} 
be $\hat{\p}_{as}^{\star}$. %and $\hat{r}_{as}^{\star}$. 
For each action $a\in\mathcal{A}$, we can generate a lower bounding hyperplane
\begin{align}\label{eq:newalpha}
\bm{\alpha}'=\left(r_{as}+\beta\sum_{z\in\mathcal{Z}}\bm{\alpha}_{az}^{\star\top}{\bm{J}_z}\hat{\p}_{as}^{\star},\ s\in\mathcal{S}
\right)^{\top},
\end{align}
where $\bm{\alpha}_{az}^{\star}=\sum_{i\in[N]}w_{az}^{i\star}\bm{\alpha}_{az}^i$.
We present the detailed algorithmic steps in Algorithm~\ref{alg:backup}. 

\begin{algorithm}
\caption{DR-backup$(\bm{b},\Lambda_{\underline{V}})$}
\label{alg:backup}
\begin{algorithmic}[1]
	\STATE {\bf Input:} belief $\bm{b}$, lower bounding hyperplanes $\Lambda_{\underline{V}}$
			\FOR{$\forall a\in\mathcal{A}$}
			\STATE solve \eqref{eq:LBprob} for action $a$
			\STATE $\mathcal{L}(a)\gets\bm{\alpha}'$ (calculated using \eqref{eq:newalpha})
		\ENDFOR
		\STATE {\bf Output:} $\mbox{argmax}_{\bm{\alpha}\in\mathcal{L}}\bm{\alpha}^{\top}\bm{b}$
\end{algorithmic}
\end{algorithm}

Next, we discuss how to update the upper bound and describe the algorithmic steps of $DR\mbox{-}update$ in Algorithm \ref{alg:update}. Combining \eqref{eq:maxproblem} and the dual representation of \eqref{eq:UB}, for each $a\in\mathcal{A}$, we solve 
\footnotesize
\begin{subequations}\label{eq:UBprob}
\begin{align}
{\max_{\bm{\rho}_a, \bm{\kappa}_{a}^1,\bm{\kappa}_{a}^2,\bm{\sigma}_{a}}}\quad&{\sum_{s\in\mathcal{S}}\bm{c}_{as}^{\top}\bm{\rho}_{as}+\sum_{s\in\mathcal{S}}b_s r_{as}+\sum_{s\in\mathcal{S}}\left(-\bar{p}_{as}^\top\bm{\kappa}_{as}^1+\bar{p}_{as}^\top\bm{\kappa}_{as}^2+\sigma_{as}\right)}\\
\label{eq:UBphat}\mbox{s.t.}\quad&\beta {b_{s}\sum_{z\in\mathcal{Z}}{\bm{J}_z}^{\top}\varphi_{az}+\beta b_s\sum_{z\in\mathcal{Z}}\psi_{az}{\bm{J}_z}^{\top}\bm{1}+\bm{\kappa}_{as}^1-\bm{\kappa}_{as}^2-\bm{1}\sigma_{as}\geq0},&&\forall s\in\mathcal{S}\\
&\bm{b}^{i\top}\varphi_{az}+\psi_{az}\leq v_i, &&\forall z\in\mathcal{Z}, i\in[|\Upsilon_{\overline{V}}|]\\
&\varphi_{az}\in\mathbb{R}^{|\mathcal{S}|},\ \psi_{az}\in\mathbb{R}, &&\forall z\in\mathcal{Z}, i\in[|\Upsilon_{\overline{V}}|]\\
&\eqref{eq:dualut},\eqref{eq:dualvar},\eqref{eq:dualrho}.\nonumber
\end{align}
\end{subequations}
\normalsize
Here $\varphi_{az}$ and $\psi_{az}$ are the dual variables associated with the two sets of constraints, $\sum_{i\in[|\Upsilon_{\overline{V}}|]}w^i\bm{b}^i=\bm{b}$, $\sum_{i\in[|\Upsilon_{\overline{V}}|]}w^i=1$, respectively. The maximum objective value among all $a\in\mathcal{A}$ is added to $\Upsilon_{\overline{V}}$.

\begin{algorithm}
\caption{DR-update$(\bm{b},\Upsilon_{\overline{V}})$}
\label{alg:update}
\begin{algorithmic}[1]
	\STATE {\bf Input:} belief $\bm{b}$, upper bounding points $\Upsilon_{\overline{V}}$
	%\STATE {\bf Initialize:} $\tilde{\Upsilon}_{\bm{b}}\gets\emptyset$
		\FOR{$\forall a\in\mathcal{A}$}
		\STATE $\mathcal{Q}(a)\gets$(optimal objective value of \eqref{eq:UBprob} for action $a$)
		%\STATE $\tilde{\Upsilon}_{\bm{b}}\gets\tilde{\Upsilon}_{\bm{b}}\cup Q$
		\ENDFOR
	%\STATE $\Lambda_{\underline{V}}\gets\Lambda_{\underline{V}}\cup\mbox{argmax}_{\bm{\alpha}\in\tilde{\Lambda}_{\bm{b}}}\bm{\alpha}^{\top}\bm{b}$
\STATE {\bf Output:} $(\bm{b},\max_{a \in \mathcal A}\{\mathcal{Q}(a)\})$
%\STATE {\bf Output:} $\Lambda_{\underline{V}}$
\end{algorithmic}
\end{algorithm}

\begin{remark}
The complexity of the related algorithm presented in \cite{smith2004heuristic} is based on the finiteness of the scenario tree up to a tolerance level $\epsilon$. In the DR-HSVI algorithm, the scenario tree is not finite as the nature is able to choose from a continuous ambiguity set of distributions, and therefore the scenario tree has an infinite number of elements. Later we numerically demonstrate the convergence of the DR-HSVI algorithm in Section \ref{sec:comp} for different combinations of parameter choices. 
\end{remark}

\section{Numerical Studies}
\label{sec:comp}
We test DR-POMDP policies for dynamic epidemic control (Sections \ref{subsec:setup} and \ref{subsec:dynamic}), and compare the  results of a two-state epidemic control problem with the ones given by POMDP and robust POMDP (Section \ref{subsec:policy}). We vary parameter choices to test the robustness and sensitivity of DR-POMDP policies (i) under various types of ambiguity sets used in the in-sample tests (Sections \ref{sec:varyingASsize}, \ref{sec:varyingASnum}) and (ii) given certain noise added to the transition-observation probability value obtained at the end of each decision period in out-of-sample tests (Sections \ref{sec:sensitivity1}, \ref{sec:sensitivity2}).  In Sections \ref{sec:state-time} and \ref{sec:uncertainty-time}, we increase the sizes of the two-state influenza epidemic control instances in Section \ref{subsec:setup}, demonstrate the algorithmic convergence, and present computational time results of using POMDP and DR-POMDP for solving larger-scale epidemic control instances. 

\subsection{Two-state Influenza Epidemic Control Problem}
\label{subsec:setup}
We study the problem of influenza epidemic control mentioned in Section \ref{sec:Example}. 
In the base setting, we consider two states, epidemic (E) and non-epidemic (N), and four actions as $a \in $ \{Level 0, Level 1, Level 2, Inspection\}. Here Level 0 corresponds to the minimum disease prevention and intervention plan, e.g., doing nothing, while Level 2 corresponds to the most restrictive strategy. The ``Inspection'' action refers to the same disease-control strategy as the Level 0 action, except that the DM pays extra cost to improve the observation of disease spread to obtain more accurate ILI rate. 

For actions $a\in\{0,1,2\}$, the transition probability matrix is given by
\begin{align}\label{eq:TRepidemic}
    %\bm{T}=
    \begin{pmatrix}
        0.99 - 0.1a & 0.01 + 0.1a\\
        0.3 - 0.1a  & 0.7 + 0.1a
    \end{pmatrix}.
\end{align}
When $a=0$ (i.e., the DM does nothing), the above transition probabilities follow studies on influenza epidemics (see, e.g., \cite{le1999monitoring}). The setting of the matrix \eqref{eq:TRepidemic} indicates that higher-level actions (i.e., more restrictive control strategies) will lead to greater chances that an epidemic state turns into non-epidemic and that a non-epidemic state remains itself. The transition probability for $a=$ `Inspection' (`I') is the same as the one for $a=0$. The observation outcome is the ILI rate, calculated as the number of ILI patients per 1000 population. For actions $a\in\{0,1,2\}$, we follow  \cite{rath2003automated} and assume that the ILI rate follows a Gaussian distribution with mean value $\mu_{E} = 2-0.5a$ and variance $\mbox{Var}_E = 30 - \mu_{E}^2$ for $s=$ `Epidemic' (`E'), and with mean $\mu_{N} = 0.2-0.05a$ and variance $\mbox{Var}_N = 2 - \mu_{N}^2$ for $s=$ `Non-epidemic' (`N'). 
We discretize the observation outcome into five levels as $\{(-\infty,0],(0,1/3],(10/3,20/3],(20/3,10],(10,\infty)\}$. For $a=$ `I',  the probabilities of observing the five outcomes are $\{0.01,0.1/3,0.1/3,0.1/3,0.89\}$ when $s=$ `E', and the ILI rate follows the same distribution as the one of $a=0$ if $s=$ `N', to model the situation where more careful inspection action can result in more ILI patients showing up. The rewards for each action-state combination are presented in Table \ref{tab:rewards}, reflecting the negative number of total infections minus the effort paid for different actions in different states. 

\begin{table}[htbp]
\centering
\caption{Reward setting for each state-action pair}
\label{tab:rewards}
\begin{tabular}{c|r r r r} 
State/Action & Level 0 & Level 1 & Level 2 & Inspection \\ 
 \hline
Epidemic & $-100$ & $-50$ & $-25$ & $-110$\\
Non-epidemic & $0$ & $-20$ & $-40$ & $-20$\\ 
\hline
\end{tabular}
\end{table}

When implementing the HSVI algorithm in Section \ref{sec:approx2} for solving DR-POMDP, we set the discount factor $\beta=0.95$ and the gap tolerance $\epsilon=1.0$. The computation is terminated when the gap between the upper and lower bounds is less than $\epsilon$, at the initial states $b^0_E=0.5,\ b^0_N=0.5$. We code the algorithm in Python and execute all the tests on a computer with Intel Core i5 CPU running at 2.9 GHz and 8 GB of RAM. We solve all the linear programming models using the Gurobi solver. Note that the complexity of computing the lower bound is linear in the number of elements in $\Lambda_{\underline{V}}$, and the complexity of computing the upper bound is polynomial in the size of set $\Upsilon_{\overline{V}}$ as we need to solve linear programs. Both $|\Lambda_{\underline{V}}|$ and $|\Upsilon_{\overline{V}}|$ increase monotonically, but most elements in the two sets are dominated by others. We follow a heuristic to prune all the dominated elements whenever the number of elements increases by 10\%.

\subsubsection{Policy Comparison}
\label{subsec:policy}

We compare DR-POMDP policies with the ones by POMDP and robust POMDP via cross testing.  We randomly generate ten samples of the transition probability for Level 2 action (i.e., $a=2$) and epidemic state (i.e., $s=$ `E'), by keeping all the values the same as the base setting in \eqref{eq:TRepidemic} but letting the probability $p_{2}(N|E)=0.99-0.1 \times 2+0.1 \times x$, where $x$ follows a standard Normal distribution. (We make sure that $0 \leq p_{2}(N|E) \leq 1$ and re-sample if not.) For all three approaches, the mean value of the ten samples is used as the nominal transition probability. For robust POMDP, the maximum L1 norm from the mean defines an uncertainty set centered around the nominal probability. For DR-POMDP, we use the  mean absolute deviation to define the ambiguity set.

\begin{table}[htbp]
\centering
\caption{Estimated median values of the cross-tested rewards}
\label{tab:median}
\begin{tabular}{c|rrr}
\hline
& \multicolumn{3}{c}{Nature's policy}                                                                              \\ \hline
DM's policy                & \multicolumn{1}{c}{POMDP(std)} & \multicolumn{1}{c}{DR-POMDP(std)} &  \multicolumn{1}{c}{Robust(std)} \\ \hline
POMDP           & $\mathbf{-541.22}\ (1.08)$ &	$-609.63\ (0.93)$	& $-597.06\ (2.19)$	\\
DR-POMDP        & $-559.02\ (0.95)$ &	$-589.93\ (0.92)$	& $\mathbf{-594.30}\ (1.31)$ \\
Robust          & $-570.16\ (1.44)$ &	$\mathbf{-585.99}\ (1.22)$   & $-597.75\ (1.18)$	\\
\hline                                    
\end{tabular}
\end{table}

\begin{table}[htbp]
\centering
\caption{Estimated five-percentile values of the cross-tested rewards}
\label{tab:5perc}
\begin{tabular}{c|rrr}
\hline
& \multicolumn{3}{c}{Nature's policy}                                                                              \\ \hline
DM's policy             & \multicolumn{1}{c}{POMDP(std)} & \multicolumn{1}{c}{DR-POMDP(std)} &  \multicolumn{1}{c}{Robust(std)} \\ \hline
POMDP           & $\mathbf{-656.99}\ (2.39)$ &	$-696.34\ (1.34)$   & $-711.14\ (1.43)$	\\
DR-POMDP        & $-669.26\ (2.35)$ &   $\mathbf{-677.87}\ (1.95)$   & $-705.61\ (1.60)$	\\
Robust          & $-689.26\ (1.78)$ &	$-691.77\ (2.07)$	& $\mathbf{-698.93}\ (2.19)$	\\
\hline                                 
\end{tabular}
\end{table}

We implement the DM's optimal polices given by different approaches in out-of-sample environments where the nature follows the settings of POMDP, DR-POMDP, and robust POMDP to realize the transition probabilities in each period. 
The number of simulated instances is 5000 each.
We report the estimated value of the median and the 5-percentile values of the reward in each case in Tables \ref{tab:median} and \ref{tab:5perc}, respectively using Harrell-Davis quantile estimator \cite{harrell1982new}. We also include the standard deviation of the estimator. Note that the 5-percentile of the reward is equivalent to the 95-percentile of the cost, indicating the tail (worse) performance of different policies. Therefore, Tables \ref{tab:median} and \ref{tab:5perc} indicate that POMDP has the smallest reward when the nature agrees with the DM to pick the nominal transition probabilities at each decision period, but it can lead to much worse reward (both in terms of the mean value and tail performance) if the transition probabilities are realized as the worst-case (in robust POMDP) or from the worst-case distribution (in DR-POMDP). On the other hand, the performance of DR-POMDP solutions is quite stable and robust under all out-of-sample circumstances but the tail performance is worse than the mean results. Lastly, the robust POMDP policy yields worse mean value and tail performance when the true environment is POMDP or DR-POMDP.

\subsubsection{Results of Varying Ambiguity Set Sizes}
\label{sec:varyingASsize}
We first only consider an ambiguity in the transition-observation probabilities of Level 0 action and epidemic state. We build the ambiguity set based on the mean absolute deviation such that $\E_{\p_{as}\sim\mu_{as}}\left[|\p_{as}-\bar{\p}_{as}|\right]\leq\bm{c}_{as}$ for $a=0$ and $s=$ `E', where $\bar{\p}_{as}\in\Delta(\mathcal{S}\times\mathcal{Z})$ is the mean value of given probability samples and $\bm{c}_{as}\in\mathbb{R}^{|\mathcal{S}\times\mathcal{Z}|}$. We let $\bm{c}_{as}$ be $c\cdot\bm{1}$ for some $c\in\mathbb{R}$ and vary the values of $c$ in our tests to vary the size of the ambiguity set. 

\begin{figure}[htbp]
\centering
\begin{subfigure}[b]{0.49\textwidth}
   \includegraphics[width=\textwidth]{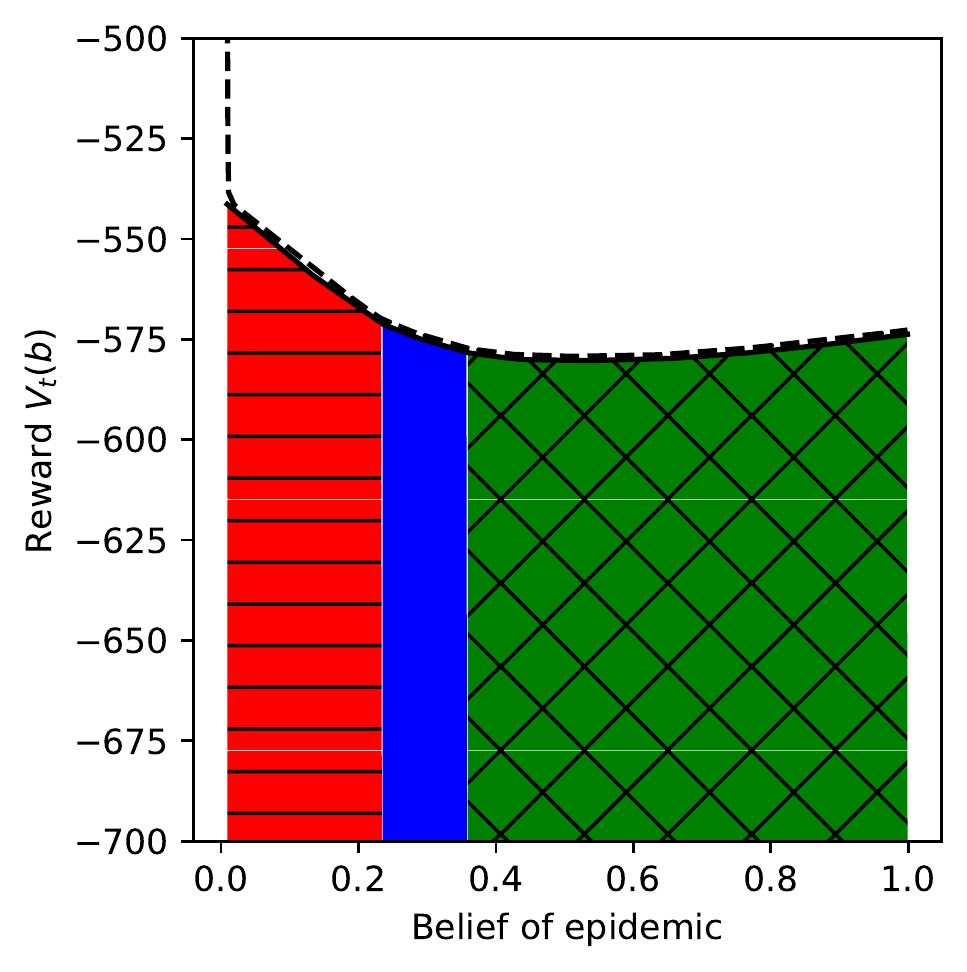}
   \caption{POMDP ($c=0.00$)}
   \label{fig:c0} 
\end{subfigure}
\hfill
\begin{subfigure}[b]{0.49\textwidth}
   \includegraphics[width=\textwidth]{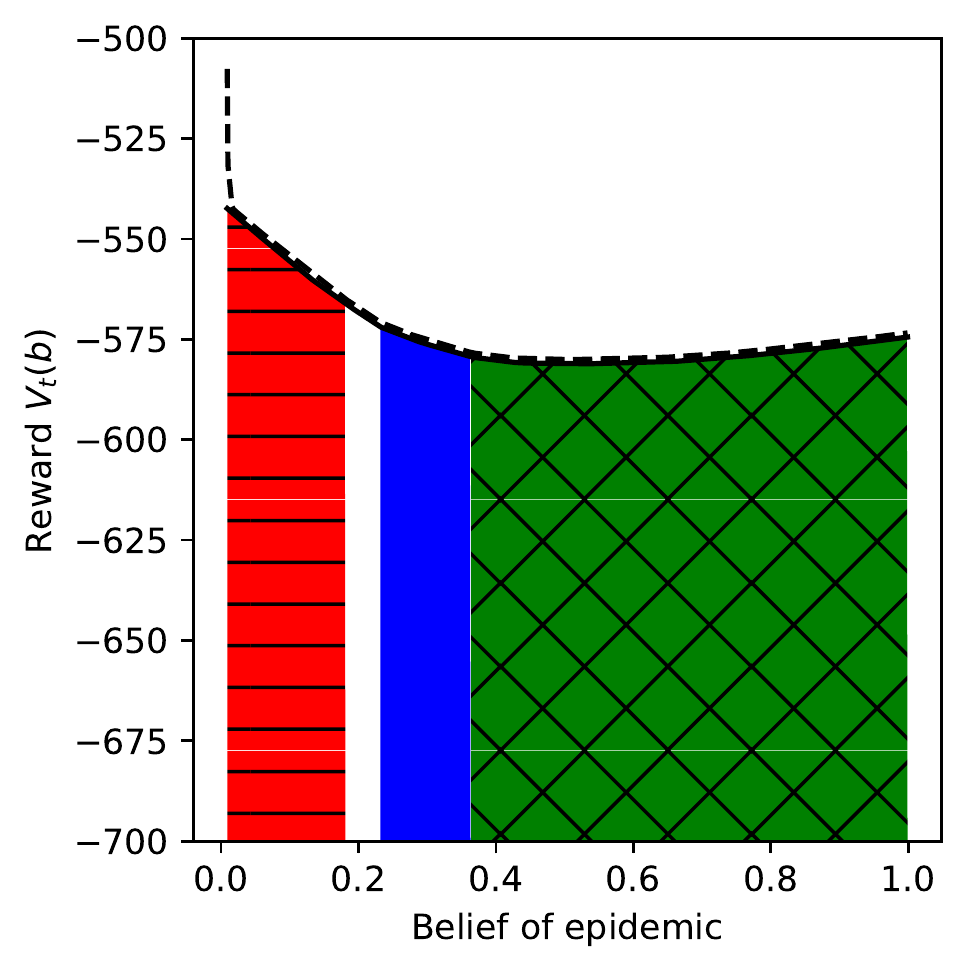}
   \caption{DR-POMDP ($c=0.03$)}
   \label{fig:c3}
\end{subfigure}
\vskip\baselineskip
\begin{subfigure}[b]{0.49\textwidth}
   \includegraphics[width=\textwidth]{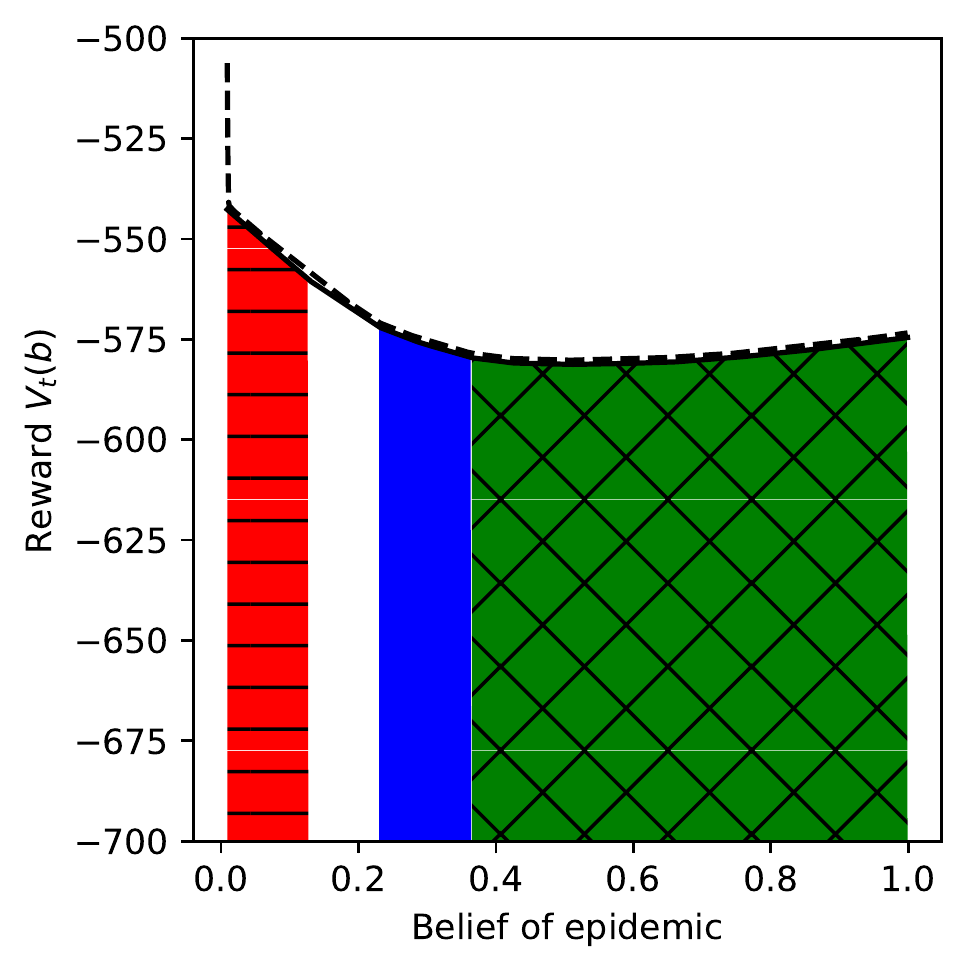}
   \caption{DR-POMDP ($c=0.06$)}
   \label{fig:c6}
\end{subfigure}
\hfill
\begin{subfigure}[b]{0.49\textwidth}
   \includegraphics[width=\textwidth]{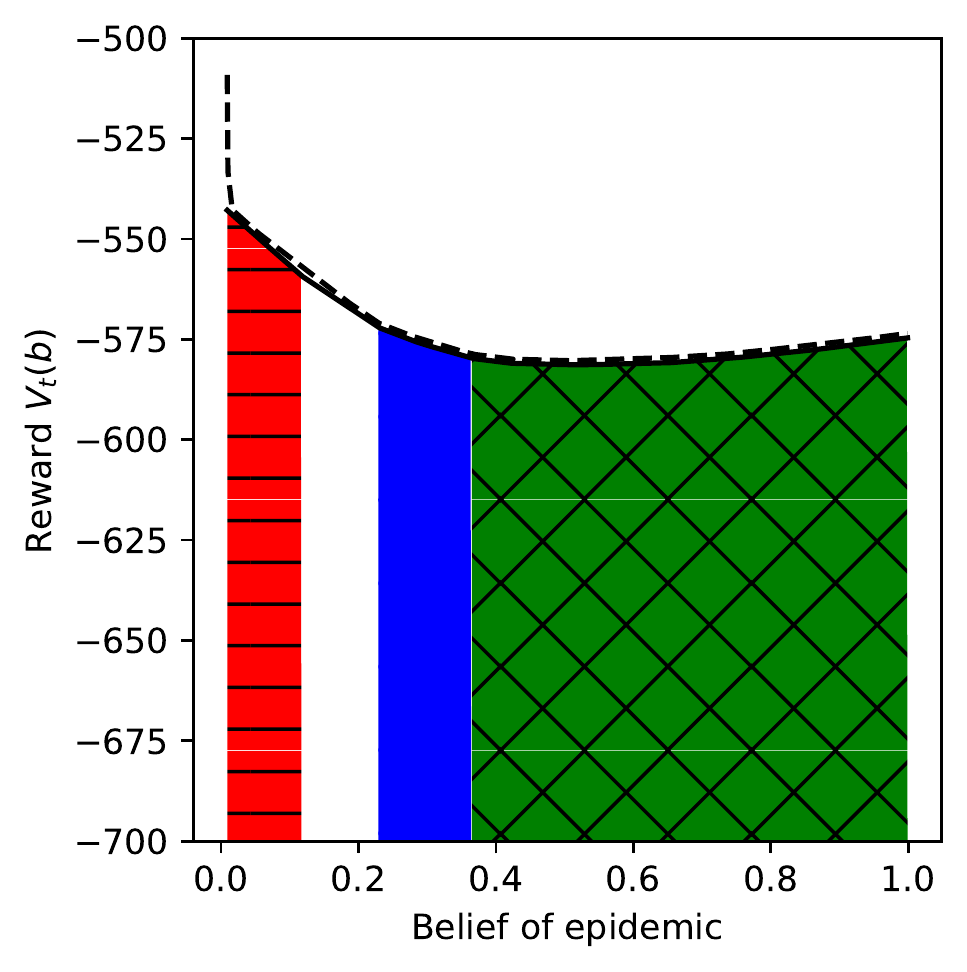}
   \caption{DR-POMDP ($c=0.09$)}
   \label{fig:c9}
\end{subfigure}
\caption{Value functions for different ambiguity-set sizes. Solid line: lower bound, dashed line: upper bound. Corresponding actions:  Level 0 -- (red, horizontal), Level 1 -- (blue, dot), Level 2 -- (green, cross),  Inspection -- (white, diagonal)}
\label{fig:AmbiguitySize}
\end{figure}

We vary $c=0.03, 0.06, 0.09$ for DR-POMDP and also compute the POMDP policy using $\bar{\p}_{as}$ as the transition-observation probabilities for all $a$ and $s$, which corresponds to a special case of DR-POMDP with $c=0.00$. Figure \ref{fig:AmbiguitySize} depicts the upper bound (dashed line) and the lower bound (solid line) of the value functions of POMDP and DR-POMDP, as well as optimal actions corresponding to different beliefs of the epidemic. The region of the belief in red (horizontal shade) corresponds to Level 0 action, blue (dotted shade) to Level 1 action, green (cross shade) to Level 2 action, and white (diagonal shade) to Inspection action. Because the ambiguity is in the transition-observation probabilities related to $a=0$, in all the subfigures, as compared to POMDP, the DR-POMDP policy relies less on Level 0 action and replaces it with the `Inspection' action when the belief of epidemic is relatively higher. When the belief increases further, both DR-POMDP and POMDP agree on implementing Level 1 or Level 2 action. As the ambiguity set size increases (i.e., $c$ increases), the DR-POMDP policy becomes more conservative and shifts to the `Inspection' action earlier, even in relatively low belief of epidemic.

\subsubsection{Results of Multiple Ambiguities}
\label{sec:varyingASnum}

Next, we increase the number of action-state pairs that have distributional ambiguity in the transition-observation probabilities. We use $c=0.05$ for all ambiguity sets and vary the number of action-state pairs among $\{2,3,4,5\}$. In Figure \ref{fig:num2}, action-state pairs (Level 0, E) and (Level 0, N) have ambiguous probability distributions and then we add pairs (Level 1, E), (Level 1, N), and (Level 2, E) one by one in the subsequent Figures \ref{fig:num3}, \ref{fig:num4}, \ref{fig:num5}. 

\begin{figure}[htbp]
\centering
\begin{subfigure}[b]{0.49\textwidth}
   \includegraphics[width=\textwidth]{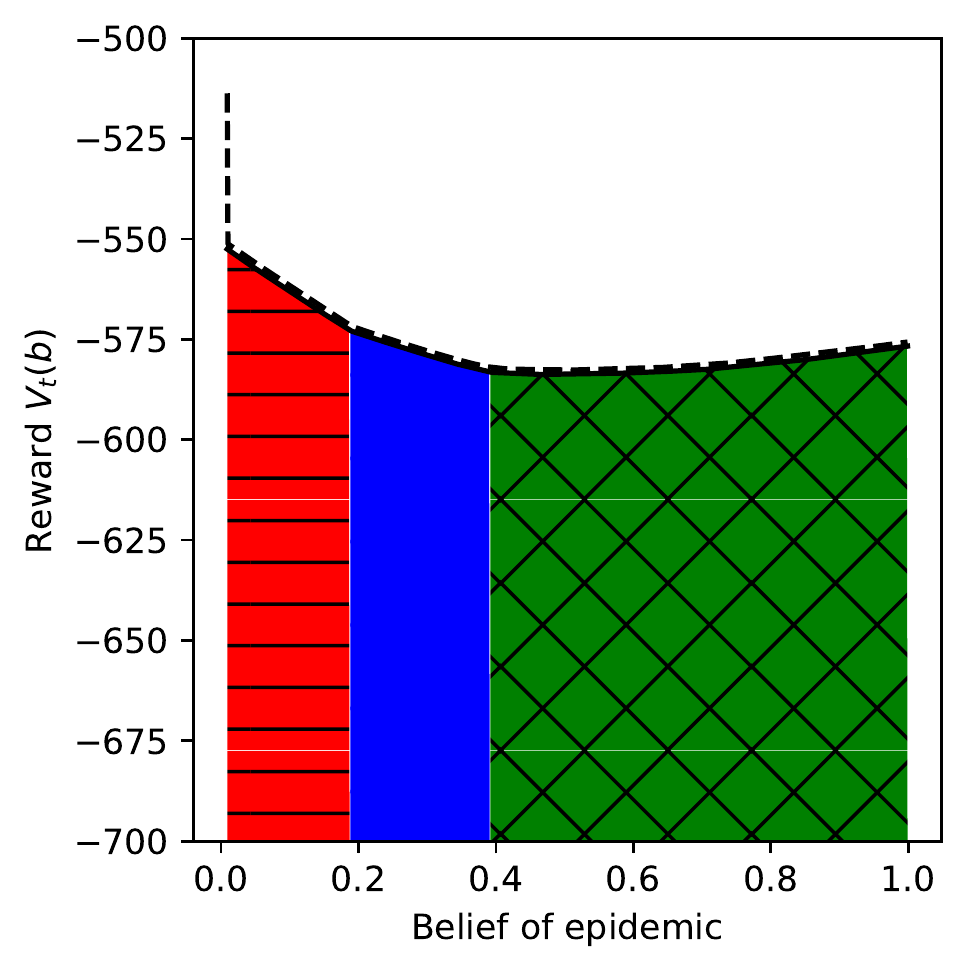}
   \caption{\{(Level 0, E), (Level 0, N)\}}
   \label{fig:num2} 
\end{subfigure}
\hfill
\begin{subfigure}[b]{0.49\textwidth}
   \includegraphics[width=\textwidth]{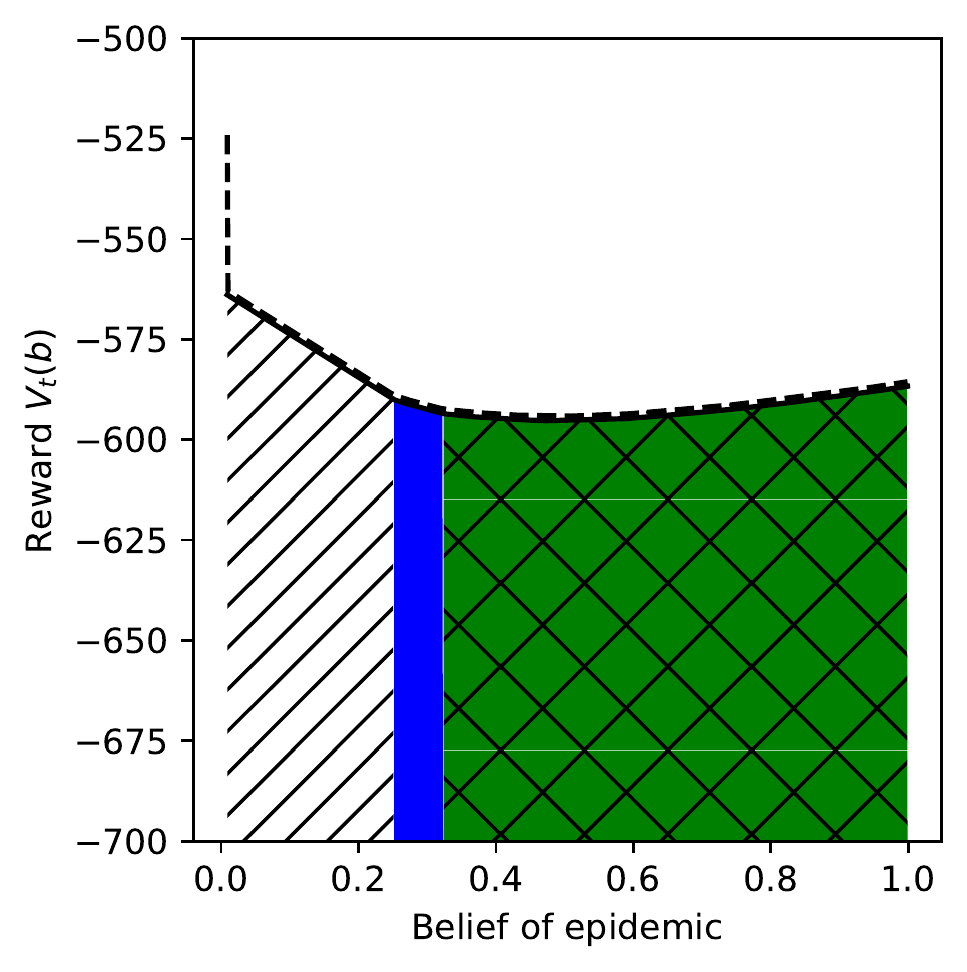}
   \caption{\{(Level 0, E), (Level 0, N), (Level 1, E)\}}
   \label{fig:num3}
\end{subfigure}
\vskip\baselineskip
\begin{subfigure}[b]{0.49\textwidth}
   \includegraphics[width=\textwidth]{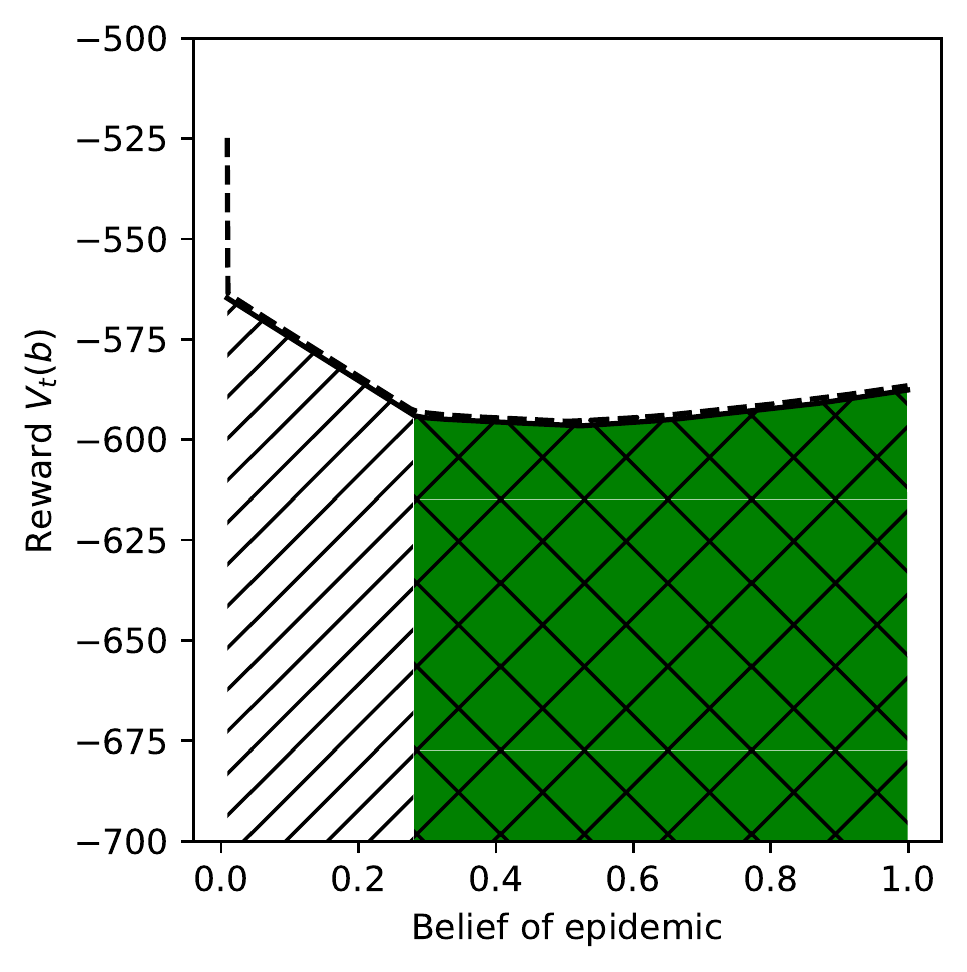}
   \caption{\{(Level 0, E), (Level 0, N), (Level 1, E), (Level 1, N)\}}
   \label{fig:num4}
\end{subfigure}
\hfill
\begin{subfigure}[b]{0.49\textwidth}
   \includegraphics[width=\textwidth]{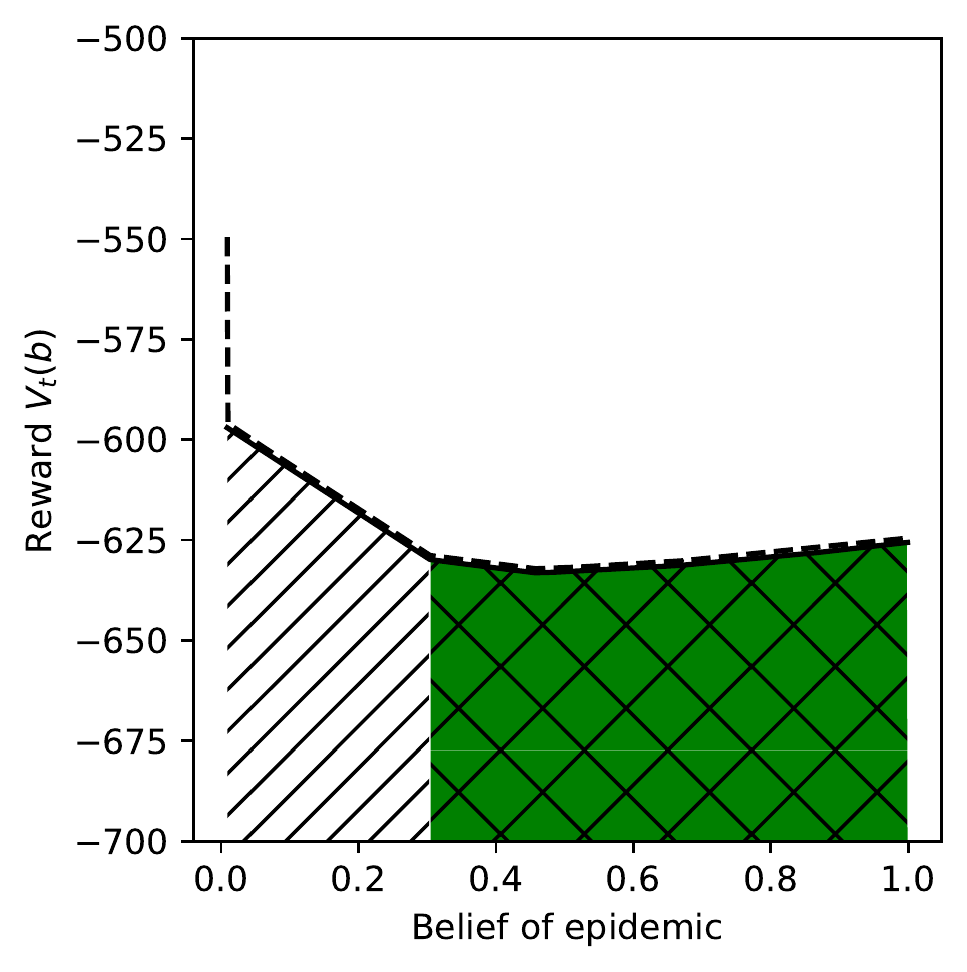}
   \caption{\{(Level 0, E), (Level 0, N), (Level 1, E), (Level 1, N), (Level 2, E)\}}
   \label{fig:num5}
\end{subfigure}
\caption{Value functions for increasing number of action-state pairs with distributional ambiguity. Solid line: lower bound, dashed line: upper bound. Corresponding actions:  Level 0 -- (red, horizontal), Level 1 -- (blue, dot), Level 2 -- (green, cross),  Inspection -- (white, diagonal)}
\label{fig:AmbiguityNumber}
\end{figure}

We observe that the reward becomes smaller as we increase the number of action-state pairs with distributional ambiguity. This is because the worst-case scenario is considered jointly for all action-state pairs and the DR-POMDP policy aims to achieve a conservative reward outcome. Moreover, the belief range where Level 1 action is taken becomes smaller as we consider the distributional ambiguity in the transition-observation probabilities associated with $a=1$. The `Inspection' action also replaces the Level 0 action as we increase the number of ambiguity sources.

\subsubsection{Solution Robustness under Different Ambiguity Sets}
\label{sec:sensitivity1}

We simulate the DR-POMDP policies on instances with an initial state `E' chosen with probability 50\%. We use different sizes of ambiguity sets for the nature to choose the worst-case distributions in the in-sample computation. Specifically, we consider $c=0.03, 0.06, 0.09$ to compute DR-POMDP policies using the ambiguity setting in Section \ref{sec:varyingASsize} and then vary $c' = 0.00, 0.03, 0.06, 0.09$ to change the nature's ambiguity set size for testing each DR-POMDP policy. 

\begin{figure}[htbp]
\centering
\begin{subfigure}[b]{0.6\textwidth}
   \includegraphics[width=\textwidth]{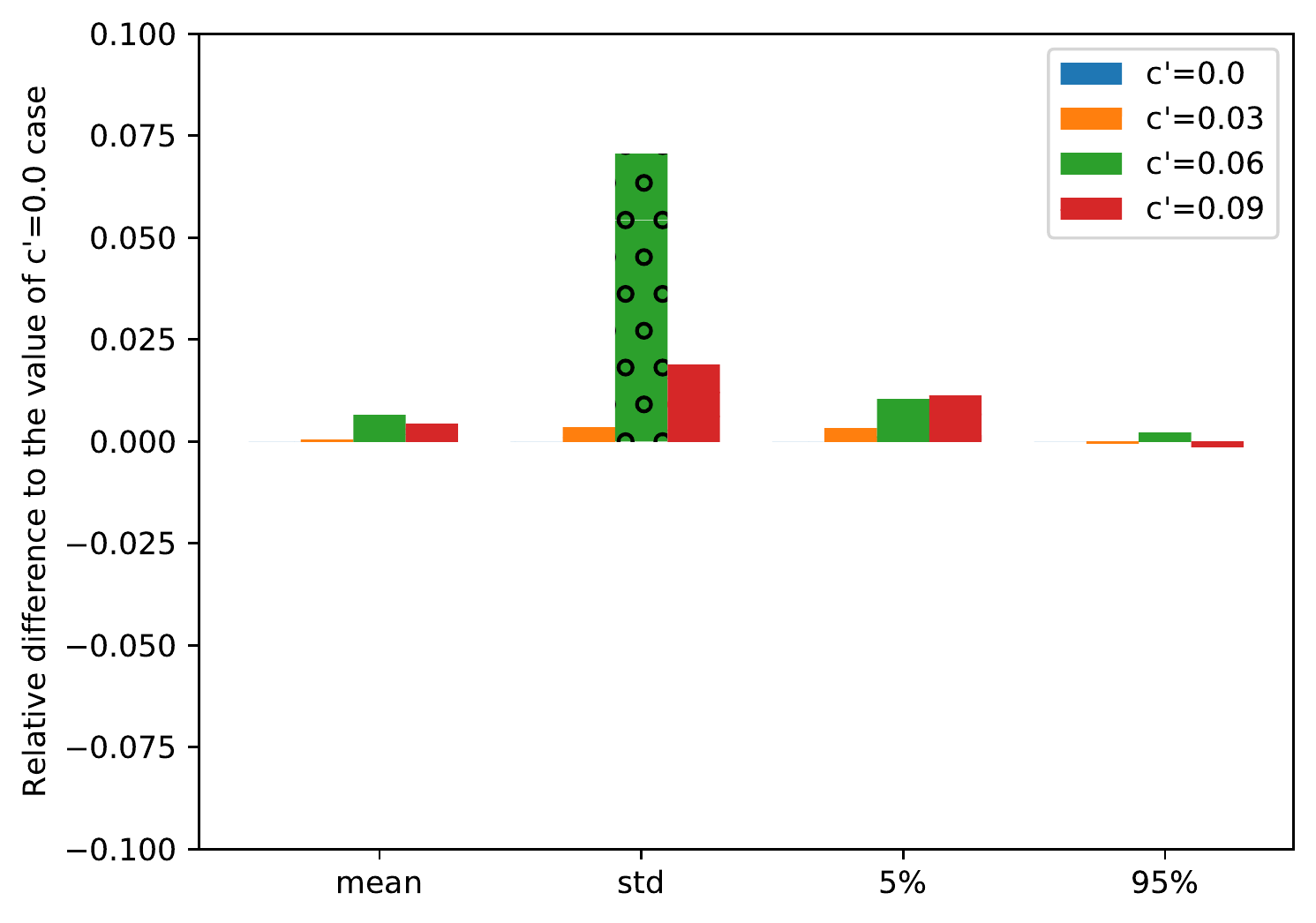}
   \caption{DR-POMDP ($c=0.03$)}
   \label{fig:Ic0} 
\end{subfigure}

%\vskip\baselineskip
\begin{subfigure}[b]{0.6\textwidth}
   \includegraphics[width=\textwidth]{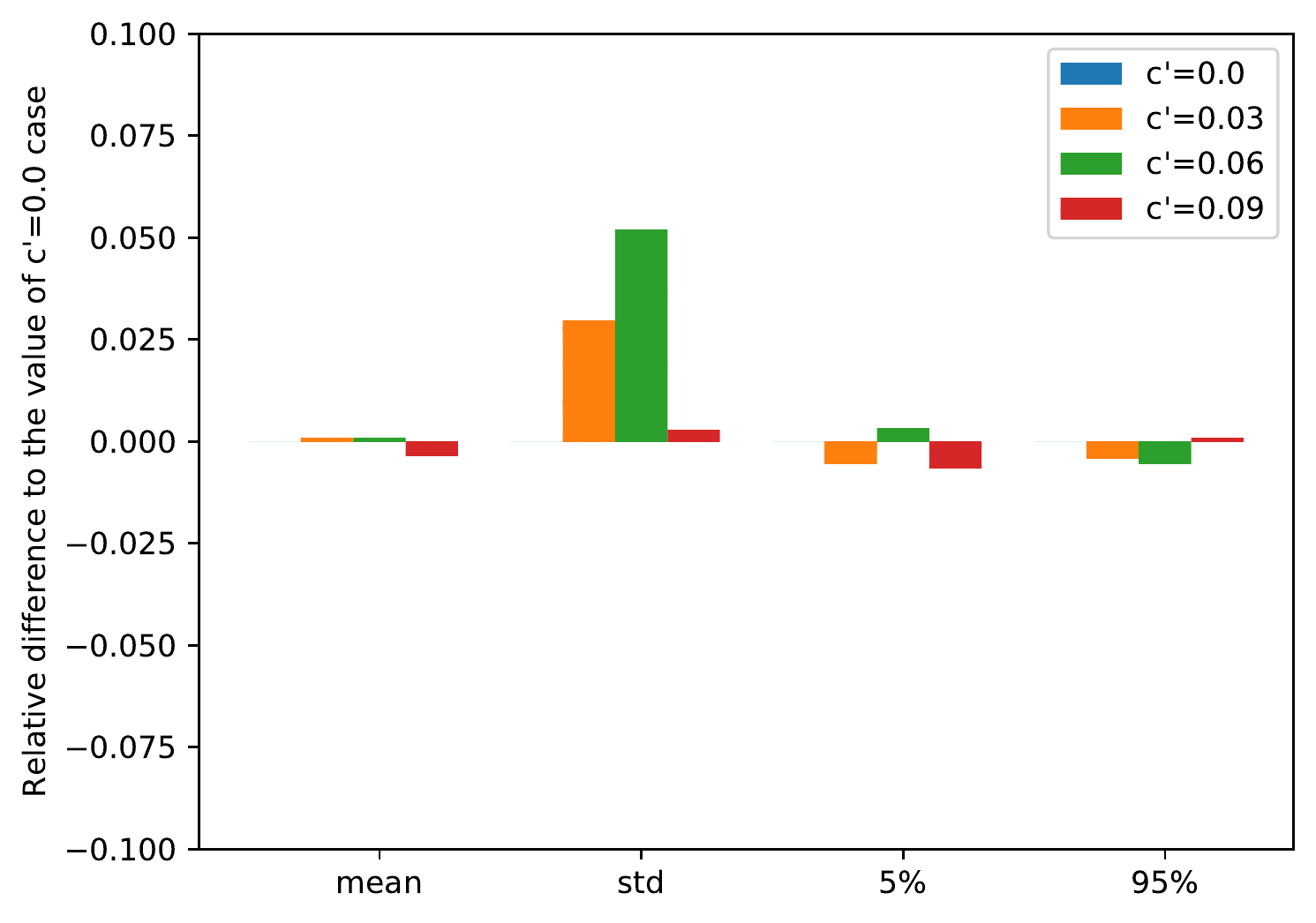}
   \caption{DR-POMDP ($c=0.06$)}
   \label{fig:Ic3}
\end{subfigure}

\begin{subfigure}[b]{0.6\textwidth}
   \includegraphics[width=\textwidth]{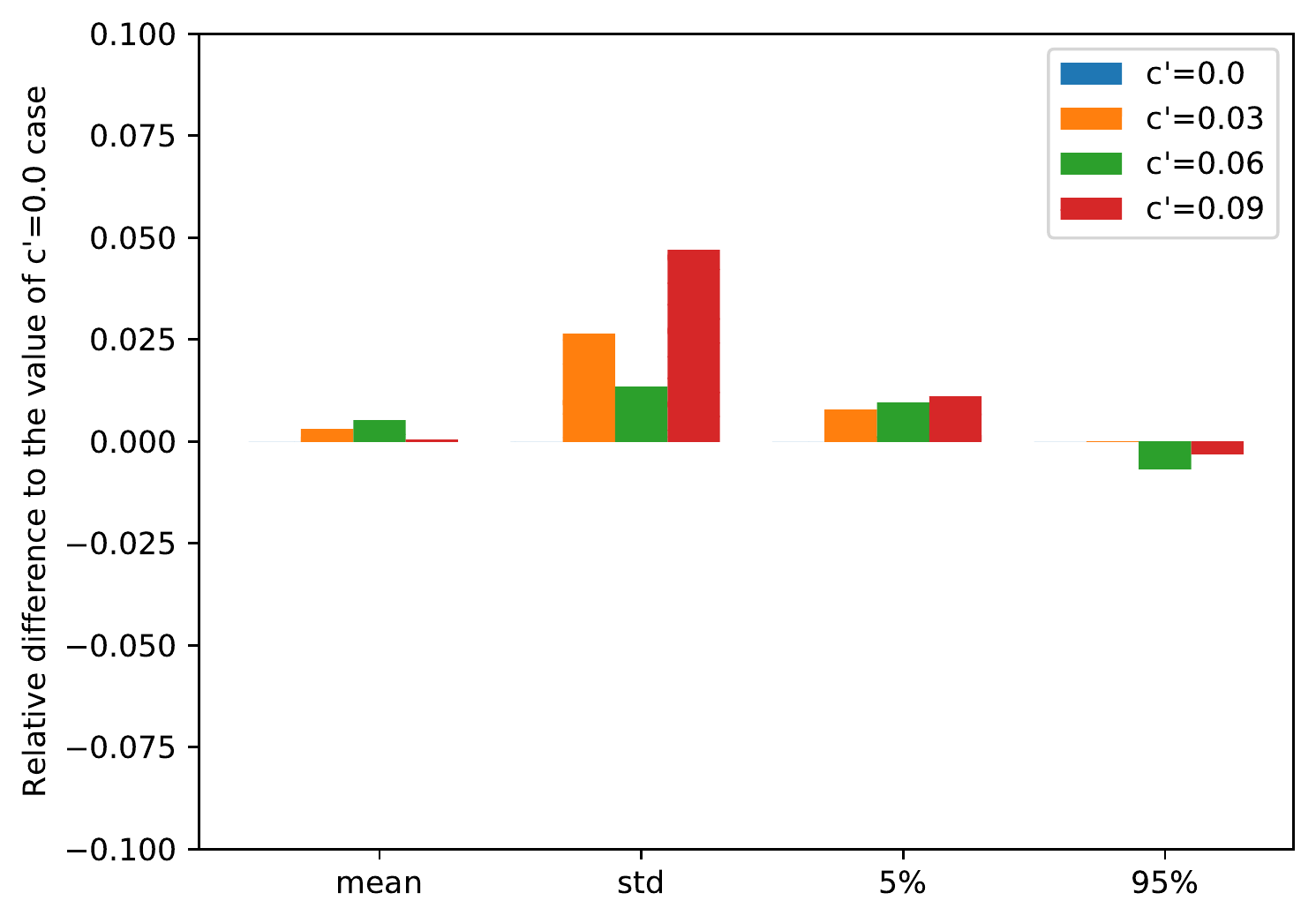}
   \caption{DR-POMDP ($c=0.09$)}
   \label{fig:Ic6}
\end{subfigure}

\caption{Statistics of the reward (mean, standard deviation, 5-percentile, 95-percentile) obtained by implementing DR-POMDP policies in in-sample tests under different ambiguity sets used by the nature.}
\label{fig:insample1}
\end{figure}

Figure \ref{fig:insample1} presents the statistics of the reward, including mean, standard deviation, 5-percentile and 95-percentile values, by implementing the DR-POMDP policies in in-sample tests when the nature uses different sizes of ambiguity sets to choose the worst-case distribution for the transition-observation probabilities. We observe that DR-POMDP policies are robust and not sensitive to the ambiguity set size change, especially in the mean, worst and best reward values.

\subsubsection{Solution Sensitivity under Noise Added to the Realized Transition-Observation Probabilities}
\label{sec:sensitivity2}

We argue that our assumption about the true transition-observation probabilities being accessible at the end of each decision period is relatively weak, by testing the DR-POMDP policies in out-of-sample scenarios while adding noise to the $\p$-value obtained at the end of each period. Specifically, when the DM takes Level 0 action, the transition probability of switching from an epidemic state to a non-epidemic state follows $p_{0}(N|E)=0.99+e\cdot x$, where $e\in\{0.0,0.1,0.2,0.3\}$, and $x$ follows a standard Normal distribution. (We ensure that $0\leq p_{0}(N|E) \leq 1$ and re-sample if not.) 

\begin{figure}[htbp]
\centering
\begin{subfigure}[b]{0.6\textwidth}
   \includegraphics[width=\textwidth]{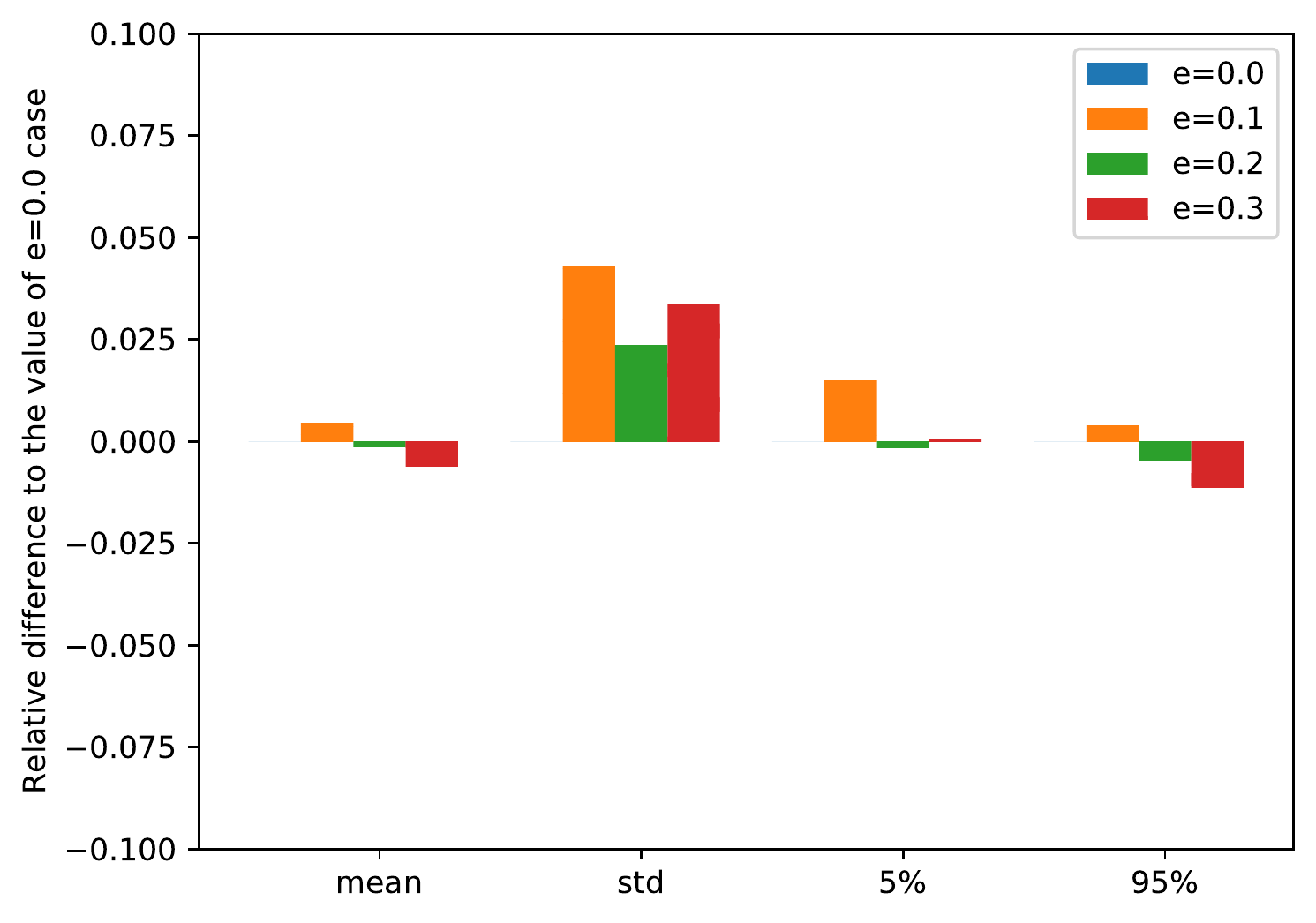}
   \caption{DR-POMDP ($c=0.03$)}
   \label{fig:Oc0} 
\end{subfigure}

\begin{subfigure}[b]{0.6\textwidth}
   \includegraphics[width=\textwidth]{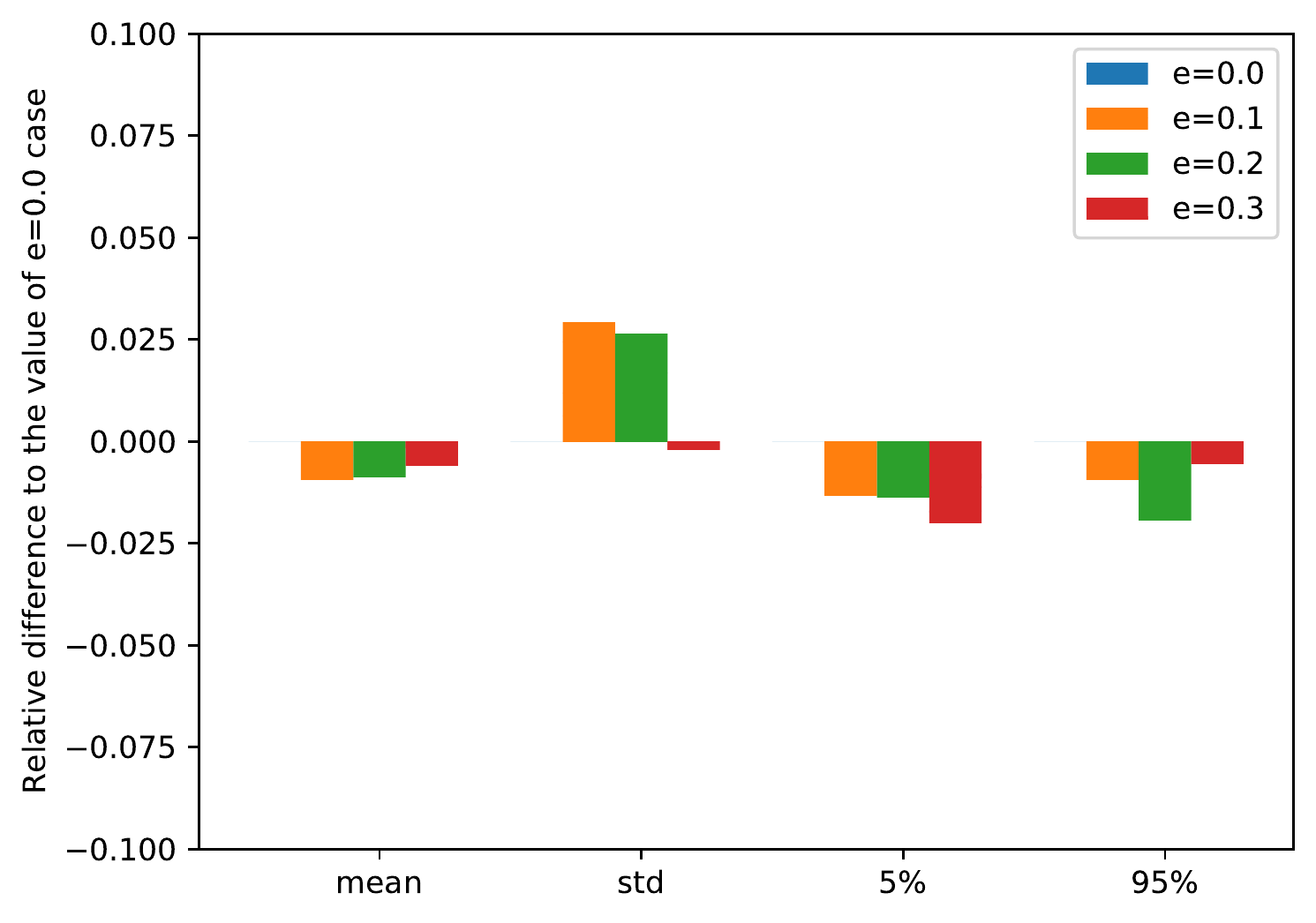}
   \caption{DR-POMDP ($c=0.06$)}
   \label{fig:Oc3}
\end{subfigure}

\begin{subfigure}[b]{0.6\textwidth}
   \includegraphics[width=\textwidth]{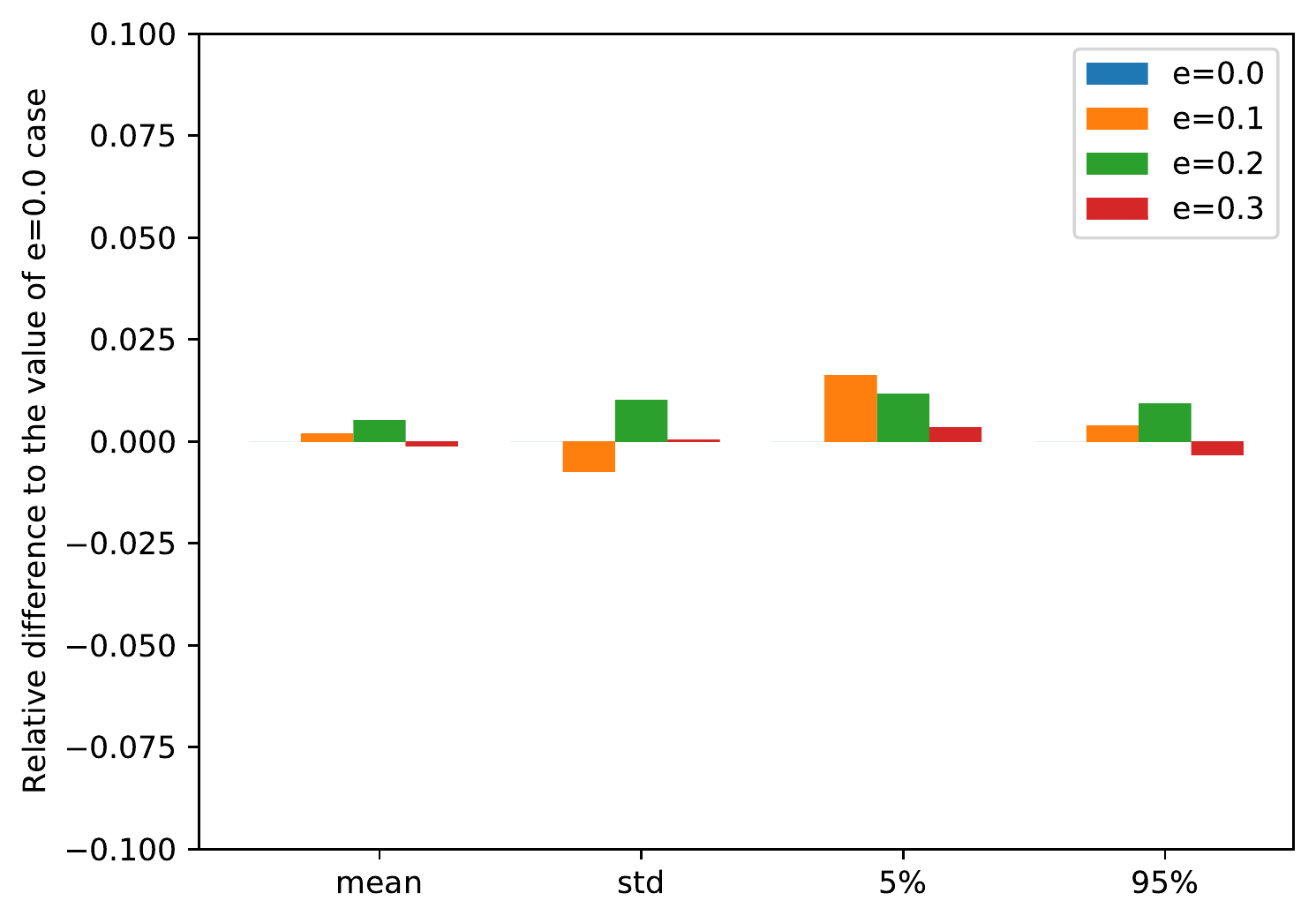}
   \caption{DR-POMDP ($c=0.09$)}
   \label{fig:Oc6}
\end{subfigure}
\caption{Statistics of the reward (mean, standard deviation, 5-percentile, 95-percentile) obtained by performing DR-POMDP policies in out-of-sample tests with noisy $\p$-values.}
\label{fig:outsample1}
\end{figure}

Figure \ref{fig:outsample1} presents the statistics of the reward, including mean, standard deviation, 5-percentile and 95-percentile values, by implementing the DR-POMDP policies in out-of-sample scenarios under varying $\p$-values obtained at the end of each decision period. 
Similar to the previous section, we compare the reward statistics with the case when $e=0.0$, i.e., the case when the DM can fully access the true $\p$-value at the end of each period.  For different ambiguity sets ($c=0.03, 0.06, 0.09$), the DR-POMDP solutions are not sensitive to the perturbation of $\p$-values obtained at the end of each period as we increase the noise. Moreover, all the statistics are within less than $2.5\%$ differences from the results of $e=0.0$, indicating that our assumption  about the necessity of using side information to obtain the true $\p$-value at the end of each period is not strong. 

\subsection{Large-scale Dynamic Epidemic Control Problem}\label{subsec:dynamic}
We demonstrate the algorithmic convergence and compare the computational-time difference for larger-sized instances when applying the HSVI algorithm. We increase the problem size and instance diversity by extending the previous two-state model. Specifically, we consider people who are susceptible to infection and people who have recovered, so that we can model the variation and dynamics in the infection rate. We utilize the SIR compartmental model in epidemiology  \citep[see][]{hethcote2000mathematics, harko2014exact}, where $S$, $I$, $R$ represent the susceptible, infected and recovered population ratios, respectively. These quantities can be modeled using differential equations: 
\begin{align*}
    \frac{dS(t)}{dt} &= -a_1I(t)S(t),\\
    \frac{dI(t)}{dt} &= a_1I(t)S(t)-a_0I(t),\\
    \frac{dR(t)}{dt} &= a_0I(t),
\end{align*}
where $a_0$ is the rate of recovery, and $a_1$ is the average number of contacts per person per time. In this problem setting, we assume that these quantities can be controlled by the DM. %To use discrete time and discrete state space, 
We discretize the time horizon and consider discretized states $\tilde{S}$, $\tilde{I}$, $\tilde{R}$. Furthermore, we take a first-order approximation and define the transition probabilities such that they satisfy
\begin{align*}
    \E\left[\tilde{S}^{t+1}|\tilde{S}^t\right]&=\tilde{S}^t-a_1\tilde{I}^t\tilde{S}^t dt,\\
    \E\left[\tilde{I}^{t+1}|\tilde{I}^t\right]&=\tilde{I}^t+a_1\tilde{I}^t\tilde{S}^t dt-a_0\tilde{I}^t dt,\\
    \E\left[\tilde{R}^{t+1}|\tilde{R}^t\right]&=\tilde{R}^t+a_0\tilde{I}^t dt.
\end{align*}
We further assume that the states can only transition to its neighboring states, and the quantity of $\tilde{S}$ cannot increase. (Similarly, the quantity of $\tilde{R}$ cannot decrease.) We assume $dt=1$ in the subsequent discussion.

The DM is able to make an imperfect observation of the state $\tilde{I}^t$. The outcome of the observation is typically less than or equal to the true state $\hat{I}$, and the accuracy depends on the quality of the test. We assume that the observation outcome follows a Normal distribution with mean $a_2\times\hat{I}$ (with $a_2$ being a parameter that the DM can control) and standard deviation $0.25\times\hat{I}$, and is further discretized by allocating the probability mass to the closest discrete observation outcome. 

Moreover, the DM can implement certain epidemic control policies to vary $a_1\in[0.1,1.0]$ and $a_2\in[0,1]$, and we fix $a_0 = 0.25$. Choosing a low value of $a_1$ results in high cost due to its economic impact for a strict measure, and choosing a high value of $a_2$ results in high cost due to operating an expensive test process. We set the goal to minimize the number of infected people and preventing it from exceeding the treatment capacity, which is set as 0.2\% of the overall population. Each percentage of population being infected will result in 10 units of cost, while 15 units of cost is incurred when the total infection is more than treatment capacity. Varying one unit of the $a_1$- and $a_2$-values costs 10 and 3 units, respectively. Additionally, when the total infection is more  than 0.5\% of the population, a reward $=20$ will be given for performing the most strict measure in $a_1$. Therefore, 
\begin{align*}
    r_{as} &= 
    \begin{cases}
    - 1000\times\tilde{I} - 10\times(1.0-a_1) - 3\times a_2, & \mbox{if } \hat{I}<0.002\\
    - 2500\times\tilde{I} - 10\times(1.0-a_1) - 3\times a_2, & \mbox{if } \hat{I}\geq0.002,
    \end{cases}\\
    &\quad + 20 \mbox{ if $\hat{I}\geq0.005$ and $a_1$ is the lowest value.}
\end{align*}
where $a \in \{a_1,a_2\}$ and $s \in \{\tilde{S},\tilde{I},\tilde{R}\}$.

\subsubsection{Computational Time for Varying Numbers of States}\label{sec:state-time}
Let $\tilde{I}=0.001$ and $0.005$, representing the `Non-epidemic' state and `Epidemic' state, respectively. We consider the following discretization schemes for the states $\tilde{S}$: $\{0.90,0.95\}$, $\{0.50,0.70,0.90,0.95\}$, and $\{0.30,0.40,0.50,0.60,0.70,0.80,0.90,0.95\}$.

In the numerical experiment, we only consider ambiguities in the action $a_1=1.0$, corresponding to implementing the least strict control policy for reducing the infection rate. We set the radius of the ambiguity set as $c=0.02$. Thus, the different problem sizes are $(s4,a4,z3,u8)$, $(s8,a4,z3,u16)$, and $(s16,a4,z3,u32)$. We set the initial belief to be totally in the non-epidemic state, and allow a tolerance $\epsilon=1.0$. The computational time limit is 3600 seconds. 
\begin{figure}[htbp]
\centering
\begin{subfigure}[b]{0.49\textwidth}
   \includegraphics[width=\textwidth]{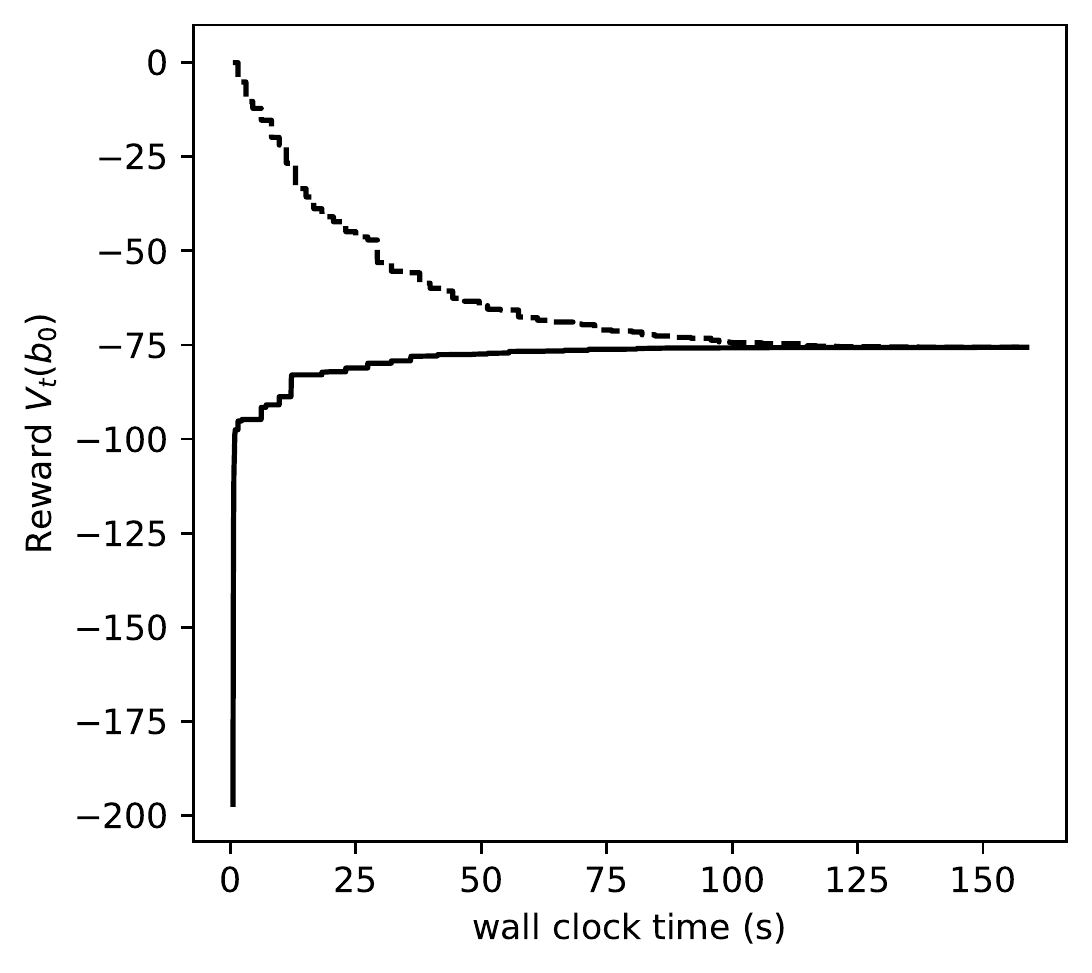}
   \caption{POMDP ($c=0.00$)}
\end{subfigure}
\hfill
\begin{subfigure}[b]{0.49\textwidth}
   \includegraphics[width=\textwidth]{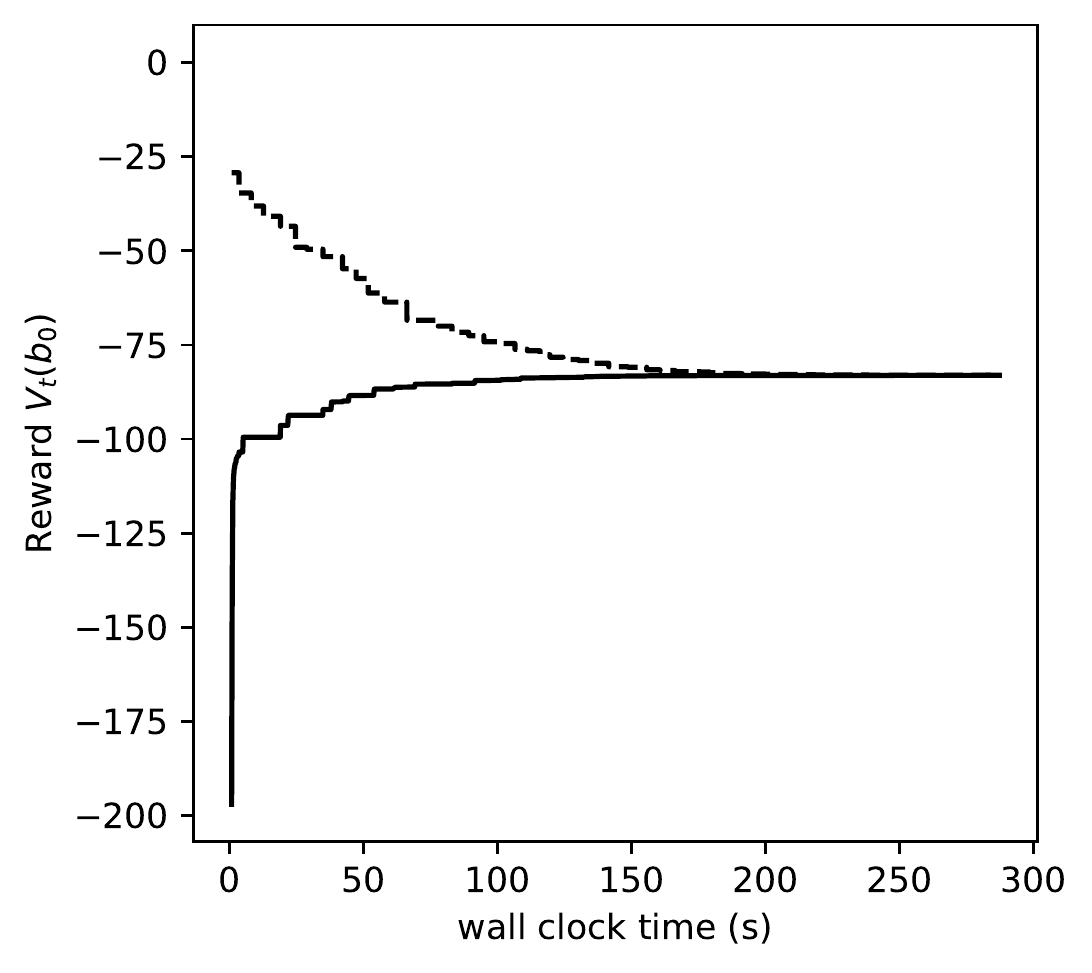}
   \caption{DR-POMDP ($c=0.02$)}
\end{subfigure}
\caption{Dynamic epidemic control problem instance $(s4,a4,z3,u8)$. Solid line: lower bound, dashed line: upper bound}
\label{fig:n4}
\end{figure}

\begin{figure}[htbp]
\centering
\begin{subfigure}[b]{0.49\textwidth}
   \includegraphics[width=\textwidth]{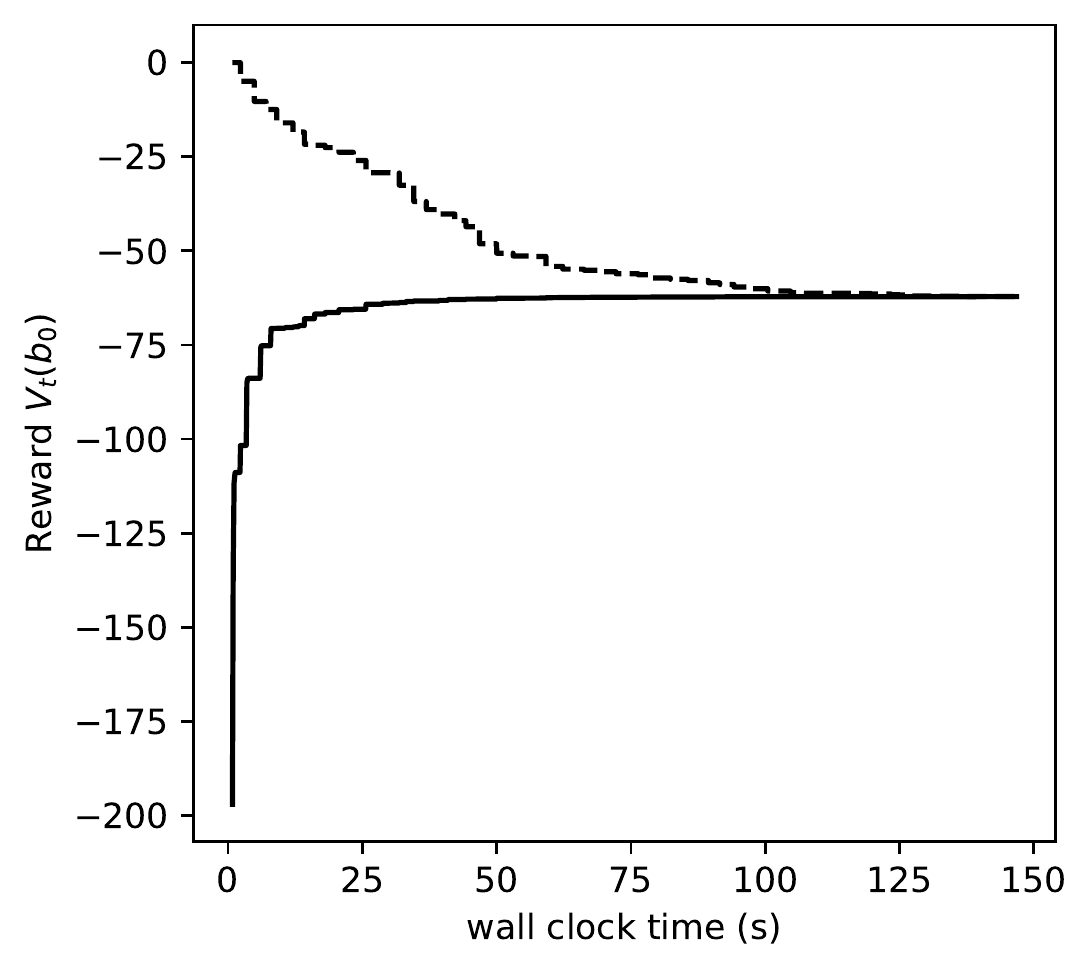}
   \caption{POMDP ($c=0.00$)}
\end{subfigure}
\hfill
\begin{subfigure}[b]{0.49\textwidth}
   \includegraphics[width=\textwidth]{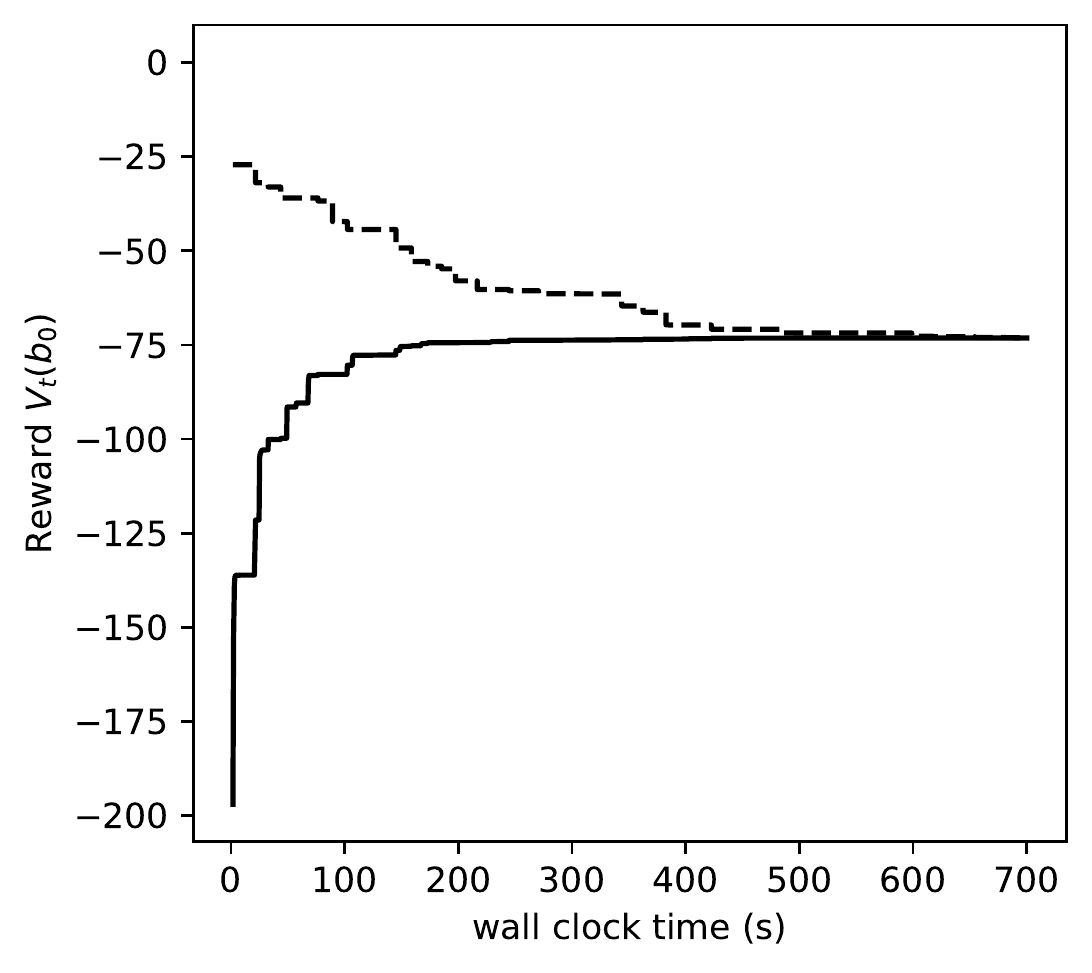}
   \caption{DR-POMDP ($c=0.02$)}
\end{subfigure}
\caption{Dynamic epidemic control problem instance $(s8,a4,z3,u16)$. Solid line: lower bound, dashed line: upper bound}
\label{fig:n8}
\end{figure}

\begin{figure}[htbp]
\centering
\begin{subfigure}[b]{0.49\textwidth}
   \includegraphics[width=\textwidth]{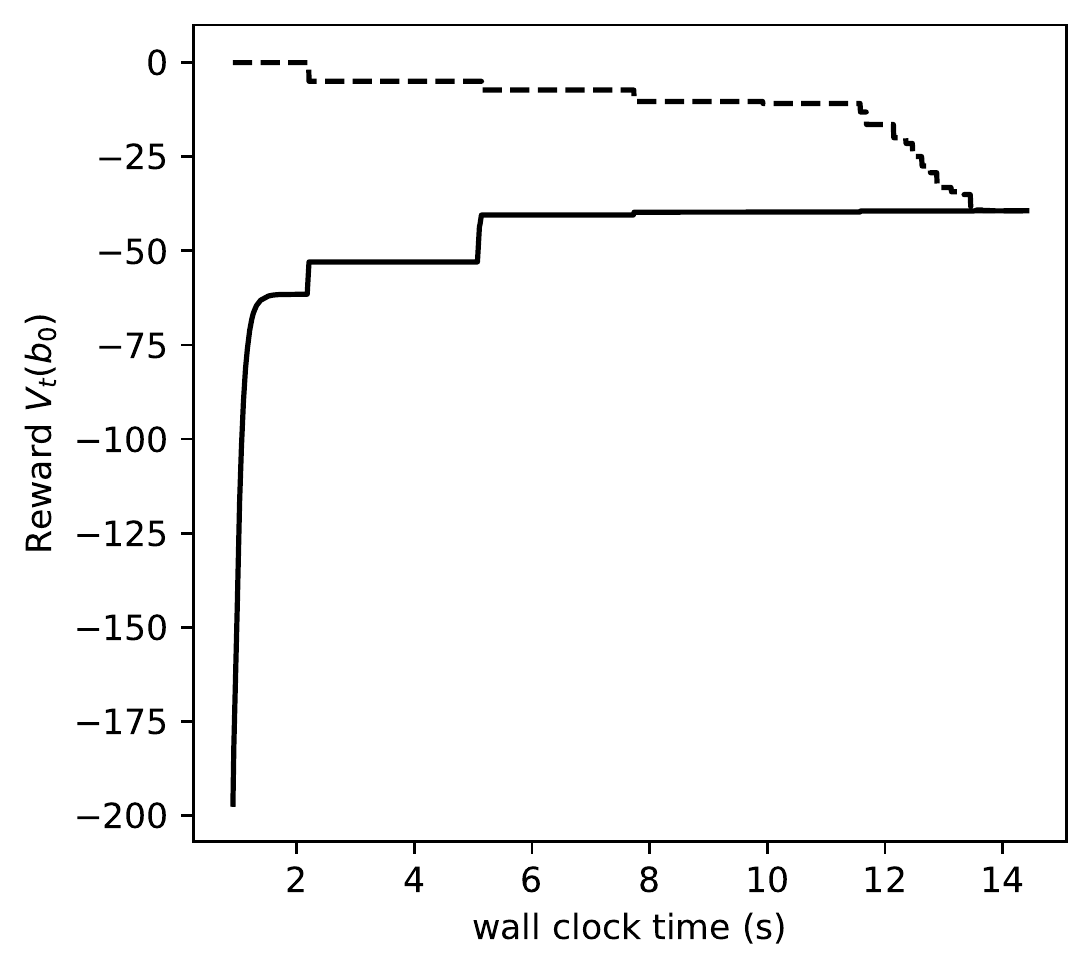}
   \caption{POMDP ($c=0.00$)}
\end{subfigure}
\hfill
\begin{subfigure}[b]{0.49\textwidth}
   \includegraphics[width=\textwidth]{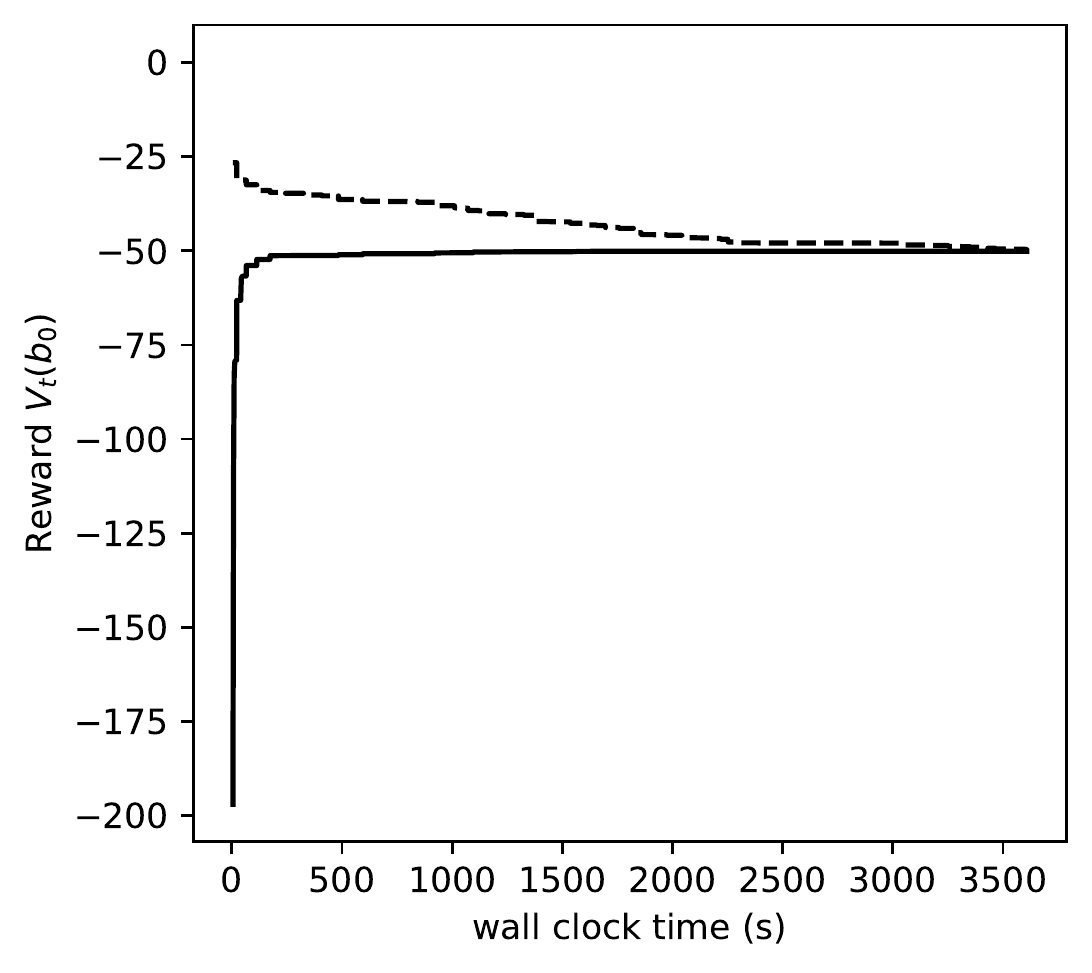}
   \caption{DR-POMDP ($c=0.02$)}
\end{subfigure}
\caption{Dynamic epidemic control problem instance $(s16,a4,z3,u32)$. Solid line: lower bound, dashed line: upper bound}
\label{fig:n16}
\end{figure}

In Figures \ref{fig:n4}, \ref{fig:n8}, \ref{fig:n16}, we depict how the upper bound and lower bound of POMDP ($c=0.00$) and DR-POMDP ($c=0.02$) policies converge as functions of time for the above three problem sizes, respectively. We observe that the computational time for POMDP does not correlate with the number of states. When the number of states are 4 and 8, the corresponding instances take about 150 seconds to converge, as compared to the instances having 16 states take about 14 seconds to converge. On the other hand, the computational time for DR-POMDP increases as the number of states and ambiguity sets increase. We also point out that the value function for DR-POMDP evaluated at $b_0$ is lower than that of POMDP, which is expected since DR-POMDP is more conservative. 

\subsubsection{Computation Time for Varying Uncertainty Sizes}\label{sec:uncertainty-time}
We change the number of ambiguity sets and compare their solutions and computation time. The states are $\tilde{S}\in\{0.50,0.70,0.90,0.95\}$ and $\tilde{I}\in\{0.001,0.005\}$, and actions are $(a_1,a_2)\in\{(0.1,0.1),(0.1,1.0),(1.0,0.1),(1.0,1.0)\}$. We increase the number of actions that are associated with ambiguity sets from 1 to 4. Since there are 8 states in total, the number of ambiguity sets are 8, 16, 32, and 64, respectively. The results are shown in Figure \ref{fig:u}. The solution time are 614, 625, 1012, 1497 seconds, respectively and increase as the number of ambiguity sets increases. The optimal objective values are $-62.64$, $-64.58$, $-71.71$, $-72.99$, respectively, and decrease monotonically.
\begin{figure}[htbp]
\centering
\begin{subfigure}[b]{0.49\textwidth}
   \includegraphics[width=\textwidth]{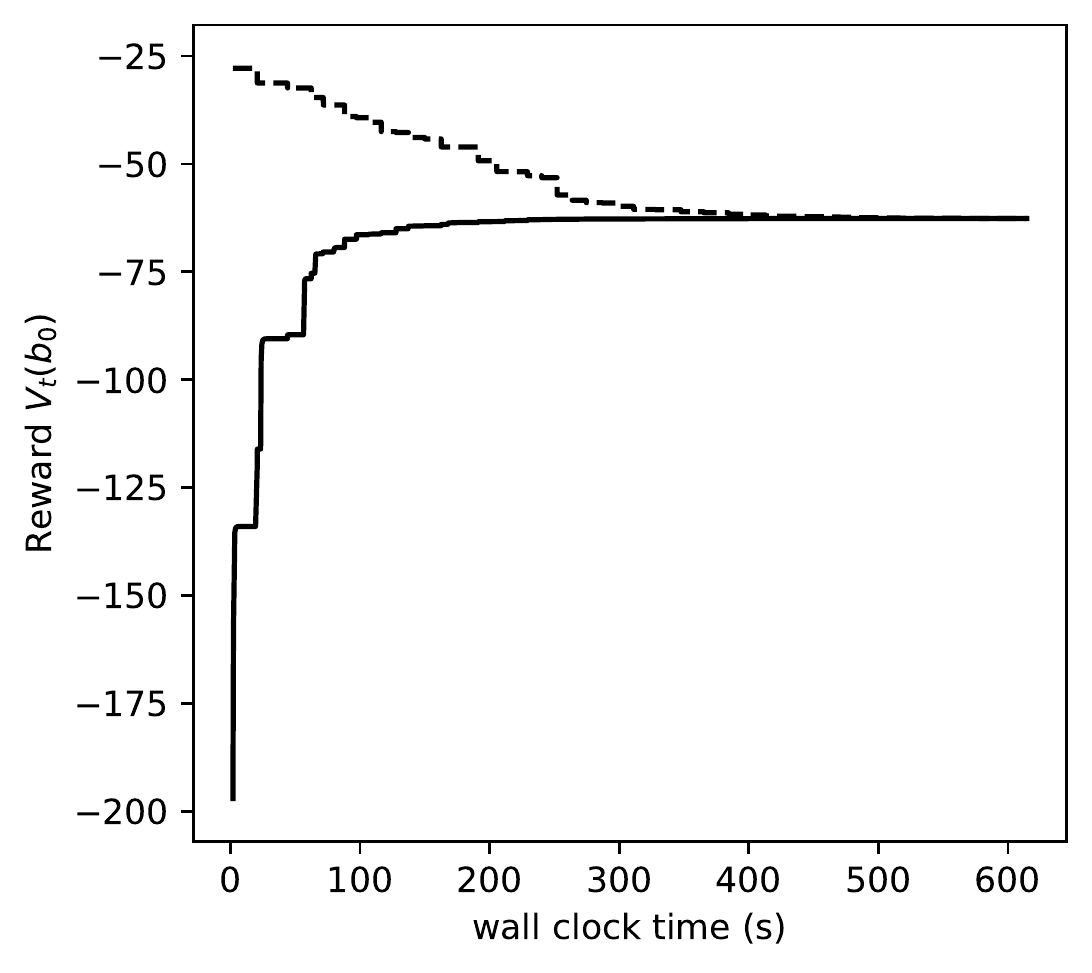}
   \caption{DR-POMDP $(s8,a4,z3,u8)$}
\end{subfigure}
\hfill
\begin{subfigure}[b]{0.49\textwidth}
   \includegraphics[width=\textwidth]{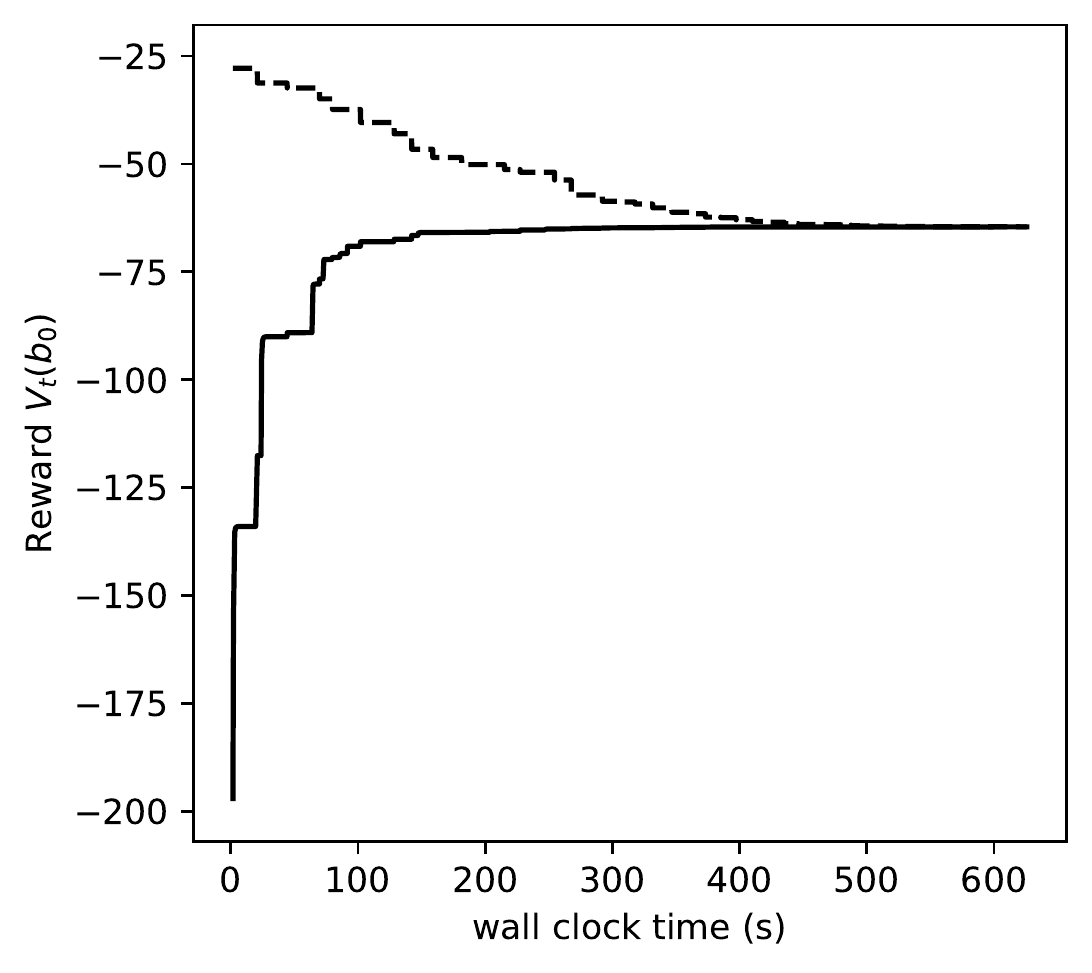}
   \caption{DR-POMDP $(s8,a4,z3,u16)$}
\end{subfigure}
\\
\begin{subfigure}[b]{0.49\textwidth}
   \includegraphics[width=\textwidth]{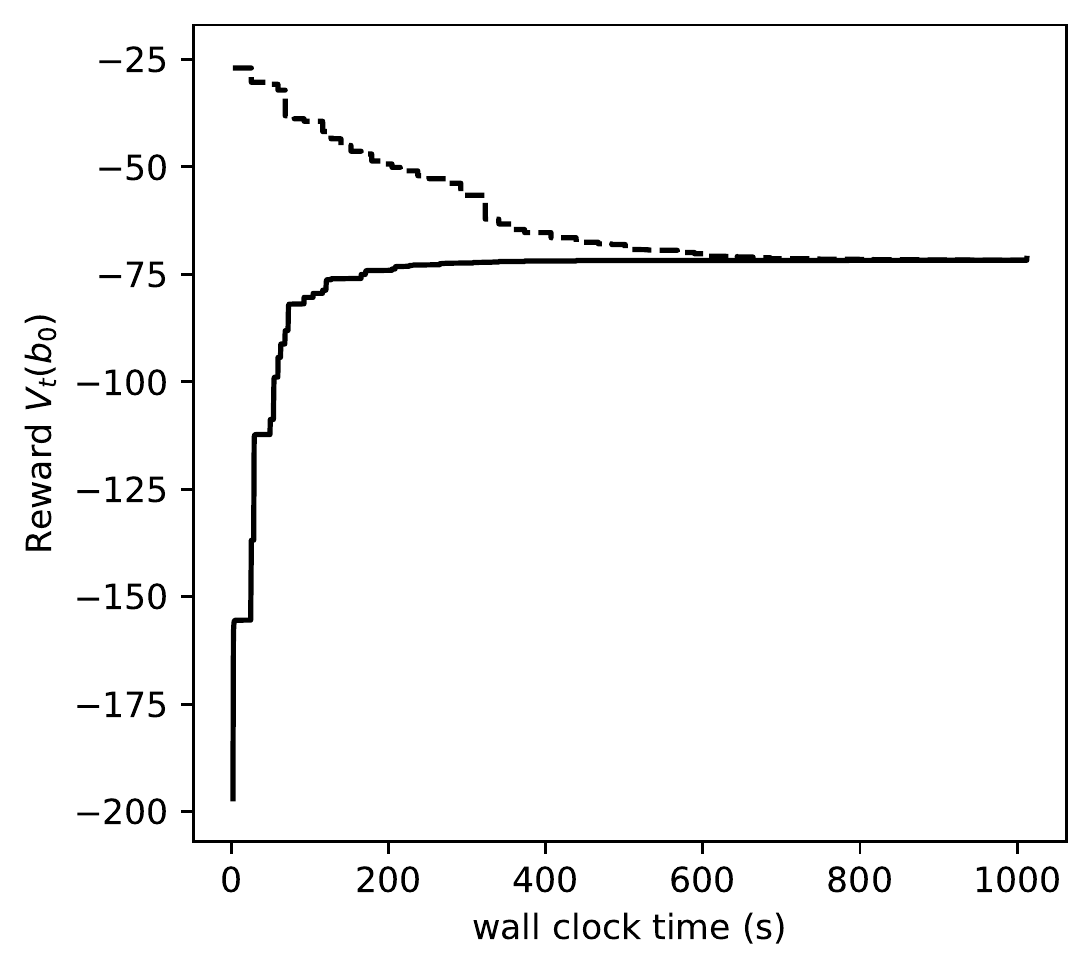}
   \caption{POMDP $(s8,a4,z3,u32)$}
\end{subfigure}
\hfill
\begin{subfigure}[b]{0.49\textwidth}
   \includegraphics[width=\textwidth]{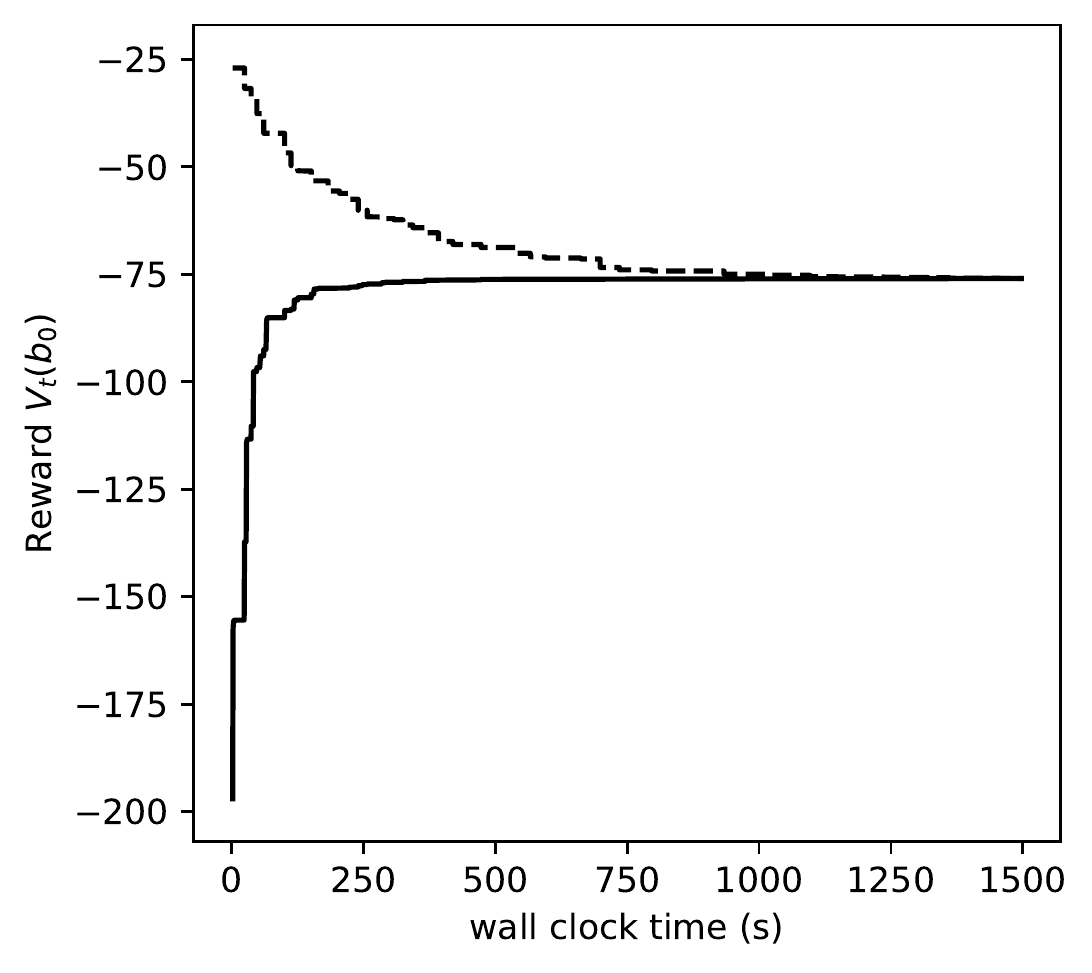}
   \caption{DR-POMDP $(s8,a4,z3,u64)$}
\end{subfigure}
\caption{Dynamic epidemic control problem instances with varying number of ambiguity sets. Solid line: lower bound, dashed line: upper bound}
\label{fig:u}
\end{figure}

\section{Conclusion}
\label{sec:conclusion}
In this paper, we developed new models and algorithms for POMDP when the transition probability and the observation probability are uncertain, and the probability distribution is not perfectly known. We presented a scalable approximation algorithm and numerically compared DR-POMDP optimal policies with the ones of  the standard POMDP and robust POMDP, in both in-sample and out-of-sample tests. Although due to the more complicated model and problem settings, DR-POMDP is much harder to solve, it produces more conservative and robust results than POMDP. It is also not sensitive to the misspecified ambiguity set and true transition-observation probability values obtained at the end of each decision period. 

In the future research, we aim to solve DR-POMDP when the outcomes of the transition-observation probabilities are not observable to the DM at the end of each time. In such a case, the value function is dependent on a set of belief states, where the characterization of the value function becomes much more challenging. We are also interested in designing randomized policy or time-dependent policy for DR-POMDP when we relax the condition that the nature is able to perfectly observe the DM's action, or when the nature is not completely adversarial. We will compare the performance of different types of policies on diverse instances.

\section*{Acknowledgments}
The authors thank the referees and the Associate Editor for their constructive comments and helpful suggestions. The authors gratefully acknowledge the support from the U.S. Department of Engineering (DoE) grant \# DE-SC0018018 and National Science Foundation (NSF) grant \# CMMI-1727618. 

%\bibliographystyle{apalike}
%\bibliography{ref}

\appendix
\section{Relaxation of $a$-rectangularity}
\label{sec:arectangular}

In this section, we investigate a variant of DR-POMDP where we relax the rectangularity condition of the ambiguity set in the actions. So far, we have only considered the setting where the ambiguity set is rectangular in terms of the states in $\mathcal{S}$ and the actions in $\mathcal{A}$. This is known as $(s,a)$-rectangular set in the literature of \cite{wiesemann2013robust}, who defined the term in the context of robust MDP. Ref.\ \cite{wiesemann2013robust} also considered $s$-rectangular set in robust POMDP, which is only rectangular in terms of the states $\mathcal{S}$. This setting has randomized policy as the optimal policy. We take a similar approach and formulate the Bellman equation:

\begin{small}
\begin{align}\label{eq:vf-v1}
V^t(\bm{b})=\max_{\bm{\phi}\in\Delta(\mathcal{A})}\min_{\mu\in\mathcal{D}}\E_{P\sim\mu}\left[\sum_{a\in\mathcal{A}}\phi_a\sum_{s\in\mathcal{S}}b_s\left(r_{as}+\beta\sum_{z\in\mathcal{Z}}J_z\p_{as} V^{t+1}\left(\bm{f}\left(\bm{b},a,\p_a,z\right)\right)\right)\right],
\end{align}
\end{small}
where $\phi_a$ is the probability for selecting action $a$. We define the ambiguity set to be
\begin{align}
\tilde{\mathcal{D}}_{s}=\left\{\tilde{\mu}_{s}
\begin{pmatrix}
\p_{s} \\ \rb_{s} \\ \ut_{s}
\end{pmatrix}
\middle\vert
\begin{array}{ll}
\E_{(\p_{s},\rb_{s},\ut_{s})\sim\tilde{\mu}_{s}}\left[F_{s}\p_{s}+G_{s}\rb_{s}+H_{s}\ut_{s}\right]=\bm{c}_{s},\\
\tilde{\mu}_{s}\left(\mathcal{X}_{s}\right)=1
\end{array}
\right\},
\end{align}
where $\ut_{s}\in\mathbb{R}^{Q}$ is a vector of auxiliary variables, and 
\begin{align}
\mathcal{X}_{s}=\left\{
\begin{pmatrix}
\p_{s} \\ \rb_{s} \\ \ut_{s}
\end{pmatrix}\in
\begin{matrix}
\mathbb{R}^{|\mathcal{A}|\times|\mathcal{S}|\times|\mathcal{Z}|}\\
\mathbb{R}^{|\mathcal{A}|}\\
\mathbb{R}^{L}
\end{matrix}
\middle\vert\ 
B_{s}\p_{s}+C_{s}r_{s}+E_{s}\ut_{s}\preceq_{K_{s}}\bm{d}_{s}
\right\}.
\end{align}
Here, $F_{s} \in\mathbb{R}^{k\times(|\mathcal{A}|\times|\mathcal{S}|\times|\mathcal{Z}|)}$, $G_{s}\in\mathbb{R}^{k\times |\mathcal{A}|}$, $H_{s}\in\mathbb{R}^{k\times L}$, $\bm{c}_{s}\in\mathbb{R}^{k}$, $B_{s}\in\mathbb{R}^{\ell\times(|\mathcal{A}|\times|\mathcal{S}|\times|\mathcal{Z}|)}$, $C_{s}\in\mathbb{R}^{\ell\times |\mathcal{A}|}$, $E_{s}\in\mathbb{R}^{\ell\times L}$, and $\bm{d}_{s}\in\mathbb{R}^{\ell}$. 

The value function is also convex in the form \eqref{eq:pwlc2}, since for $t<T$,
\begin{align*}
V^t(\bm{b})=\max_{\bm{\phi}\in\Delta(\mathcal{A})}\max_{\substack{\bm{\alpha}_{az}\in\mbox{Conv}\left(\Lambda^{t+1}\right)\\ \forall a\in\mathcal{A},\ z\in\mathcal{Z}}}\sum_{s\in\mathcal{S}}b_s\min_{\left(\hat{\p}_{s},\hat{\rb}_{s},\hat{\ut}_{s}\right)}\quad&\mathrlap{\bm{\phi}^{\top}\left(\beta\sum_{z\in\mathcal{Z}}\left[\left(\bm{\alpha}_{az}^{\top}J_{az}\right)^{\top},\ a\in\mathcal{A}\right]^{\top}\hat{\p}_{s}+\hat{\rb}_{s}\right)}\\
\mbox{s.t.}\quad& F_{s}\hat{\p}_{s}+G_{s}\hat{\rb}_{s}+H_{s}\hat{\ut}_{s}=\bm{c}_{s},&&\forall s\in\mathcal{S}\\
&B_{s}\hat{\p}_{s}+C_{s}\hat{\rb}_{s}+E_{s}\hat{\ut}_{s}\preceq_{K_{s}}\bm{d}_{s},&&\forall s\in\mathcal{S}
\end{align*}
where $J_{az}\in\mathbb{R}^{|\mathcal{S}|\times(|\mathcal{A}|\times|\mathcal{S}|\times|\mathcal{Z}|)}$ is a matrix of zeros and ones that maps $\p_s$ to $\p_{asz}$. For an exact algorithm, we solve the inner minimization problem for all $\phi\in\Delta(\mathcal{A})$, $\bm{\alpha}_{az}\in\mbox{Conv}(\Lambda^{t+1}),\ \forall z\in\mathcal{Z},\ a\in\mathcal{A}$. The optimal objective is used for constructing the set $\Lambda^{t}$, at each time step $t$.

\section{General Ambiguity Set}
\label{app:generalAS}
In this section, we provide a general form of the ambiguity set where the mean values are on an affine manifold, and the supports are conic representable. For all $a\in\mathcal{A}$ and $s\in\mathcal{S}$, we define a non-empty ambiguity set
\begin{align}\label{eq:ax_ambiguityset}
\tilde{\mathcal{D}}_{as}=\left\{\tilde{\mu}_{as}
\begin{pmatrix}
\p_{as} \\ r_{as} \\ \ut_{as}
\end{pmatrix}
\middle\vert
\begin{array}{ll}
\E_{(\p_{as},r_{as},\ut_{as})\sim\tilde{\mu}_{as}}\left[F_{as}\p_{as}+G_{as}r_{as}+H_{as}\ut_{as}\right]=\bm{c}_{as},\\
\tilde{\mu}_{as}\left(\mathcal{X}_{as}\right)=1
\end{array}
\right\},
\end{align}
where $\ut_{as}\in\mathbb{R}^{L}$ is a vector of auxiliary variables, and a support with a non-empty relative interior
\begin{align}\label{eq:ax_support}
\mathcal{X}_{as}=\left\{
\begin{pmatrix}
\p_{as} \\ r_{as} \\ \ut_{as}
\end{pmatrix}\in
\begin{matrix}
\mathbb{R}^{|\mathcal{S}|\times|\mathcal{Z}|}\\
\mathbb{R}\\
\mathbb{R}^{L}
\end{matrix}
\middle\vert\ 
B_{as}\p_{as}+C_{as}r_{as}+E_{as}\ut_{as}\preceq_{K_{as}}\bm{d}_{as}
\right\}.
\end{align}
Here, $F_{as} \in\mathbb{R}^{k\times(|\mathcal{S}|\times|\mathcal{Z}|)}$, $G_{as}\in\mathbb{R}^{k\times 1}$, $H_{as}\in\mathbb{R}^{k\times L}$, $\bm{c}_{as}\in\mathbb{R}^{k}$, $B_{as}\in\mathbb{R}^{\ell\times(|\mathcal{S}|\times|\mathcal{Z}|)}$, $C_{as}\in\mathbb{R}^{\ell\times 1}$, $E_{as}\in\mathbb{R}^{\ell\times L}$, and $\bm{d}_{as}\in\mathbb{R}^{\ell}$. The symbol $\preceq_{K_{as}}$ represents a generalized inequality with respect to a proper cone $K_{as}$. We denote the marginal distribution by $\mu_{as}=\prod_{(\p_{as},r_{as})}\tilde{\mu}_{as}$, and also extend the definition to the ambiguity set so that $\mathcal{D}_{as}=\prod_{(\p_{as},r_{as})}\tilde{\mathcal{D}}_{as}=\bigcup_{\tilde{\mu}_{as}\in\tilde{D}_{as}}\prod_{(\p_{as},r_{as})}\tilde{\mu}_{as}$. The auxiliary variables  $\ut_{as}$ are used for ``lifting" techniques, enabling the representation of nonlinear constraints to linear ones.

%%%%%%%% ALL PROOFS
%%%%% PROOF OF THEOREM 4.3 %%%%%%%%%%%%%%%%%

\section{Proofs of Theorems \ref{thm:pwl} and \ref{thm:fixedpoint2}}
\label{sec:proofs}
~\\
First, we provide a detailed proof for Theorem \ref{thm:pwl} below. 
\begin{proof}
We show the result by induction. When $t=T$, $V^T(\bm{b})=0$ satisfies \eqref{eq:pwlc2}.
For $t<T$, the inner problem $Q^t(\bm{b},a)$ described in \eqref{eq:Q} becomes \begin{subequations}\label{prob:original}
\begin{align}
\min_{\tilde{\mu}_a\in\mathcal{P}\left({\tilde{\mathcal{X}}}_a\right)}\quad&{\mathrlap{\E_{\left(\p_a,\ut_a\right)\sim\tilde{\mu}_{a}}\left[\sum_{s\in\mathcal{S}}b_s\left(r_{as}+\beta\sum_{z\in\mathcal{Z}}\bm{1}^{\top}{\bm{J}_z}\p_{as}V^{t+1}\left(\bm{f}(\bm{b},a,\p_a,z)\right)\right)\right]}}\\
\label{eq:constraint1}\mbox{s.t.}\quad&{\E_{\left(\p_a,\ut_a\right)\sim\tilde{\mu}_{a}}\left[\ut_{as}\right]=\bm{c}_{as}}, && \forall s\in\mathcal{S}\\
\label{eq:constraint2}&{\E_{\left(\p_a,\ut_a\right)\sim\tilde{\mu}_{a}}\left[I\left(\left(\p_{as},\ut_{as}\right)\in\tilde{\mathcal{X}}_{as}\right)\right]}=1, && \forall s\in\mathcal{S}
\end{align}
\end{subequations}
for all $a\in\mathcal{A}$. Here $I(\cdot)$ is an indicator function, such that if event $\cdot$ is true, it returns value 1 and 0 otherwise. Associating the dual variables $\bm{\rho}_{as}$ and $\omega_{as}$ with constraints \eqref{eq:constraint1} and \eqref{eq:constraint2}, respectively, we formulate the dual of \eqref{prob:original} as
\footnotesize
\begin{subequations}
\begin{align}
\label{eq:dualmu}\max_{\bm{\rho}_a,\bm{\omega}_a}\quad&\sum_{s\in\mathcal{S}}\bm{c}_{as}^{\top}\bm{\rho}_{as}+\sum_{s\in\mathcal{S}}\omega_{as}\\
\label{eq:dualmu1}\mbox{s.t.}\quad&\mathrlap{\sum_{s\in\mathcal{S}}{\ut_{as}}^{\top}\bm{\rho}_{as}+\sum_{s\in\mathcal{S}}\omega_{as}}\\
&\quad \leq \sum_{s\in\mathcal{S}}b_s\left(r_{as}+\beta\sum_{z\in\mathcal{Z}}\bm{1}^{\top}{\bm{J}_z}\p_{as}V^{t+1}\left(\bm{f}(\bm{b},a,\p_a,z)\right)\right) &&\forall {\left(\p_{a},\ut_{a}\right)\in\tilde{\mathcal{X}}_{a}}\nonumber\\
&\bm{\rho}_{as}\in\mathbb{R}^{{|\mathcal{S}|\times|\mathcal{Z}|}},\ \omega_{as}\in\mathbb{R}&&\forall s\in\mathcal{S}.
\end{align}
\end{subequations}
\normalsize
Constraints \eqref{eq:dualmu1} are further equivalent to the following inequality with a minimization problem on the right-hand side (RHS).  
\small
\begin{subequations}\label{eq:mindualmu}
\begin{align}
\sum_{s\in\mathcal{S}}\omega_{as}\leq \\
\min_{\left(\p_a,\ut_a\right)}\quad&\sum_{s\in\mathcal{S}}b_s\left(r_{as}+\beta\sum_{z\in\mathcal{Z}}\bm{1}^{\top}{\bm{J}_z}\p_{as}V^{t+1}\left(\bm{f}(\bm{b},a,\p_a,z)\right)\right) \mathrlap{-\sum_{s\in\mathcal{S}}\ut_{as}^{\top}\bm{\rho}_{as}}\nonumber\\
\label{eq:supportconstraints1}\mbox{s.t.}\quad&
\ut_{as}\geq\p_{as}-\bar{\p}_{as}&&\forall s\in\mathcal{S}\\
\label{eq:supportconstraints2}&\ut_{as}\geq\bar{\p}_{as}-\p_{as}&&\forall s\in\mathcal{S}\\
\label{eq:supportconstraints3}&\bm{1}^{\top}\p_{as}=1&&\forall s\in\mathcal{S}\\
\label{eq:supportconstraints4}&\p_{as}\geq 0 &&\forall s\in\mathcal{S}.
\end{align}
\end{subequations}

\normalsize
Substituting \eqref{eq:pwlc2} for $V^{t+1}$ and \eqref{eq:bayesian} for $\bm{f}(\bm{b},a,\p_a,z)$, we obtain 
\small
%\begin{subequations}
\begin{align}
\label{eq:mindualmu2}
\mbox{RHS of }\eqref{eq:mindualmu}=\min_{{\left(\p_a,\ut_a\right)}}\quad&\sum_{s\in\mathcal{S}}b_s r_{as}+\beta\sum_{z\in\mathcal{Z}}\max_{\bm{\alpha}_{az}\in\Lambda^{t+1}}\left[\bm{\alpha}_{az}^{\top}\sum_{s\in\mathcal{S}}{\bm{J}_z}\p_{as}b_{s}\right] \mathrlap{-\sum_{s\in\mathcal{S}}{\ut_{as}}^{\top}\bm{\rho}_{as}}\\
\mbox{s.t.}\quad&{\eqref{eq:supportconstraints1}\mbox{--}\eqref{eq:supportconstraints4}}. \nonumber
\end{align}
%\end{subequations}
\normalsize
Since the objective of the maximization problem is linear in terms of $\bm{\alpha}_{az}, \forall z\in\mathcal{Z}$, the optimal objective value does not change by taking the convex hull of $\Lambda^{t+1}$, denoted as $\mbox{Conv}\left(\Lambda^{t+1}\right)$. Bringing the maximization to the front, we have
\small
%\begin{subequations}
\begin{align}
\label{eq:mindualmu3}
\eqref{eq:mindualmu2}=\min_{{\left(\p_a,\ut_a\right)}}\quad&\max_{\substack{\bm{\alpha}_{az}\in\mbox{Conv}\left(\Lambda^{t+1}\right) \\ \forall z\in\mathcal{Z}}}\Biggl[\sum_{s\in\mathcal{S}}b_s r_{as}+\beta\sum_{z\in\mathcal{Z}}\bm{\alpha}_{az}^{\top}\sum_{s\in\mathcal{S}}{\bm{J}_z}\p_{as}b_{s} \mathrlap{-\sum_{s\in\mathcal{S}}{\ut_{as}}^{\top}\bm{\rho}_{as}\Biggr]}\\
\mbox{s.t.}\quad&{\eqref{eq:supportconstraints1}\mbox{--}\eqref{eq:supportconstraints4}}\nonumber
\end{align}
%\end{subequations}
\normalsize
The expression in the bracket is convex (linear) in $\left(\p_a,\ut_a\right)$ for fixed $\bm{\alpha}_{az},\ z\in\mathcal{Z}$, and concave (affine) in $\bm{\alpha}_{az},\ z\in\mathcal{Z}$ given fixed values of $\left(\p_a,\ut_a\right)$. Moreover, \eqref{eq:supportconstraints1}\mbox{--}\eqref{eq:supportconstraints4} and $\mbox{Conv}\left(\Lambda^{t+1}\right)$ are convex sets. The minimax theorem (see, e.g., \cite{osogami2015robust}, \cite{du2013minimax}) ensures that the problem is equivalent to
\small
%\begin{subequations}
\begin{align}
\label{eq:mindualmu4}
\eqref{eq:mindualmu3}=\max_{\substack{\bm{\alpha}_{az}\in\mbox{Conv}\left(\Lambda^{t+1}\right) \\ \forall z\in\mathcal{Z}}}\min_{{\left(\p_a,\ut_a\right)}}\quad&\sum_{s\in\mathcal{S}}b_s r_{as}+\beta\sum_{z\in\mathcal{Z}}\bm{\alpha}_{az}^{\top}\sum_{s\in\mathcal{S}}{\bm{J}_z}\p_{as}b_{s} -\sum_{s\in\mathcal{S}}{\ut_{as}}^{\top}\bm{\rho}_{as}\\
\mbox{s.t.}\quad&\eqref{eq:supportconstraints1}\mbox{--}\eqref{eq:supportconstraints4}\nonumber
\end{align}
%\end{subequations}
\normalsize
{We take the dual of the inner minimization by associating dual variables $\bm{\kappa}_{as}^1$, $\bm{\kappa}_{as}^2$, $\sigma_{as}$ with constraints \eqref{eq:supportconstraints1}--\eqref{eq:supportconstraints3}, respectively. We thus have the following equivalence: 
\footnotesize
\begin{subequations}\label{eq:mindualmu6}
\begin{align}
\eqref{eq:mindualmu4}=\max_{\substack{\bm{\alpha}_{az}\in\mbox{Conv}\left(\Lambda^{t+1}\right)\\ \forall z\in\mathcal{Z}}}\max_{\bm{\kappa}_{a}^1,\bm{\kappa}_{a}^2,\bm{\sigma}_{a}}\quad&\mathrlap{\sum_{s\in\mathcal{S}}b_s r_{as}+\sum_{s\in\mathcal{S}}\left(-\bar{p}_{as}^\top\bm{\kappa}_{as}^1+\bar{p}_{as}^\top\bm{\kappa}_{as}^2+\sigma_{as}\right)}\\
\label{eq:dualp}\mbox{s.t.}\quad&\beta b_{s}\sum_{z\in\mathcal{Z}}{\bm{J}_z}^{\top}\bm{\alpha}_{az}+\bm{\kappa}_{as}^1-\bm{\kappa}_{as}^2-\bm{1}\sigma_{as}\geq0,&&\forall s\in\mathcal{S}\\
\label{eq:dualut}&\bm{\kappa}_{as}^1+\bm{\kappa}_{as}^2+\bm{\rho}_{as}=0,&& \forall s\in\mathcal{S}\\
\label{eq:dualvar}&\bm{\kappa}_{as}^1,\bm{\kappa}_{as}^2\in \mathbb{R}_+^{|\mathcal{S}|\times|\mathcal{Z}|},\sigma_{as}\in\mathbb{R},&& \forall s\in\mathcal{S},
\end{align}
\end{subequations}
\normalsize
Due to \eqref{eq:mindualmu}, we substitute $\sum_{s\in\mathcal{S}}\omega_{as}$ in the objective function \eqref{eq:dualmu} with  \eqref{eq:mindualmu6}.} As a result, the value function \eqref{eq:vf2} is equivalent to
\footnotesize
\begin{subequations}\label{eq:maxproblem}
\begin{align}
V^t(\bm{b})=\max_{a\in\mathcal{A}}\max_{\substack{\bm{\alpha}_{az}\in\mbox{Conv}\left(\Lambda^{t+1}\right)\\ \forall z\in\mathcal{Z}}}&\\
\nonumber\max_{\bm{\rho}_a, \bm{\kappa}_{a}^1,\bm{\kappa}_{a}^2,\bm{\sigma}_{a}}\quad&\sum_{s\in\mathcal{S}}\bm{c}_{as}^{\top}\bm{\rho}_{as}+\sum_{s\in\mathcal{S}}b_s r_{as}+\sum_{s\in\mathcal{S}}\left(-\bar{p}_{as}^\top\bm{\kappa}_{as}^1+\bar{p}_{as}^\top\bm{\kappa}_{as}^2+\sigma_{as}\right)\\
\nonumber\mbox{s.t.}\quad&\eqref{eq:dualp}\mbox{--}\eqref{eq:dualvar}\\
\label{eq:dualrho}&\bm{\rho}_{as}\in\mathbb{R}^{|\mathcal{S}|\times|\mathcal{Z}|}\ \forall s\in\mathcal{S},
\end{align}
\end{subequations}
\normalsize
and after taking the dual of the most inner maximization problem, we have 
\begin{align}
\label{eq:valuefunc}V^t(\bm{b})=\max_{a\in\mathcal{A}}\max_{\substack{\bm{\alpha}_{az}\in\mbox{Conv}\left(\Lambda^{t+1}\right)\\ \forall z\in\mathcal{Z}}}\sum_{s\in\mathcal{S}}b_s\times\Xi(a,\bm{\alpha}_{az}\ \forall z\in\mathcal{Z},s),
\end{align}
where 
\begin{subequations}
\begin{align}
\Xi(a,\bm{\alpha}_{az}\ \forall z\in\mathcal{Z},s)=\min_{\left(\p_{as},\ut_{as}\right)}\quad&\beta\sum_{z\in\mathcal{Z}}\bm{\alpha}_{az}^{\top} \bm{J}_z \p_{as}+r_{as}\\
\mbox{s.t.}\quad&
\bm{c}_{as}\geq\p_{as}-\bar{\p}_{as}\\
&\bm{c}_{as}\geq\bar{\p}_{as}-\p_{as}\\
&\bm{1}^{\top}\p_{as}=1\\
&\p_{as}\geq 0.
\end{align}
\end{subequations}
Defining set $\Lambda^t$ as
\begin{align*}
\left\{\ \left(
\Xi(a,\bm{\alpha}_{az}\ \forall z\in\mathcal{Z},s),\ s\in\mathcal{S}
\right)^{\top}
\middle\vert\ 
\begin{array}{c}
\forall a\in\mathcal{A},\\
\forall \bm{\alpha}_{az}\in\mbox{Conv}\left(\Lambda^{t+1}\right),\ \forall z\in\mathcal{Z}
\end{array}
\right\},
\end{align*}
it follows that the above value function in \eqref{eq:valuefunc} is of the form \eqref{eq:pwlc2}. Furthermore, by induction, this is true for all $t$. This completes the proof. %\qed
\end{proof}

%%%%%%%%%% PROOF OF THEOREM 4.4

The proof of Theorem \ref{thm:fixedpoint2} is given as follows. 
\begin{proof}
Consider two arbitrary value functions $V_1$ and $V_2$. Given belief state $\bm{b}$, let
\begin{align*}
a_i^{\star}=\argmax_{a\in\mathcal{A}}\min_{\mu_{a}\in\tilde{\mathcal{D}}_{a}}&\E_{(\p_a,\rb_a)\sim\mu_{a}}\left[\sum_{s\in\mathcal{S}}b_s\left(r_{as}+\beta\sum_{z\in\mathcal{Z}}\bm{1}^{\top}{\bm{J}_z}\p_{as}V_i\left(\bm{f}(\bm{b},a,\p_a,z)\right)\right)\right],
\end{align*}
for $i=1, 2$, and for all actions $a\in\mathcal{A}$, denote 
\begin{align*}
\mu_{a,i}^{\star}=\argmin_{\mu_{a}\in\tilde{\mathcal{D}}_{a}}&\E_{(\p_a,\rb_a)\sim\mu_{a}}\left[\sum_{s\in\mathcal{S}}b_s\left(r_{as}+\beta\sum_{z\in\mathcal{Z}}\bm{1}^{\top}{\bm{J}_z}\p_{as}V_i\left(\bm{f}(\bm{b},a,\p_a,z)\right)\right)\right]
\end{align*}
for $i=1,2$. First, suppose that $\mathcal{L}V_1(\bm{b})\geq\mathcal{L}V_2(\bm{b})$. Then,
\scriptsize
\begin{align}
0&\leq\mathcal{L}V_1(\bm{b})-\mathcal{L}V_2(\bm{b})\nonumber\\
&=\E_{(\p_{a_1^{\star}},\rb_{a_1^{\star}})\sim\mu_{a_1^{\star},1}^{\star}}\left[\sum_{s\in\mathcal{S}}b_s\left(r_{a_1^{\star}s}+\beta\sum_{z\in\mathcal{Z}}\bm{1}^{\top}{\bm{J}_z}\p_{a_1^{\star}s}V_1\left(\bm{f}(\bm{b},a_1^{\star},\p_{a_1^{\star}},z)\right)\right)\right]\nonumber\\
&\quad-\E_{(\p_{a_2^{\star}},\rb_{a_2^{\star}})\sim\mu_{a_2^{\star},2}^{\star}}\left[\sum_{s\in\mathcal{S}}b_s\left(r_{a_2^{\star}s}+\beta\sum_{z\in\mathcal{Z}}\bm{1}^{\top}{\bm{J}_z}\p_{a_2^{\star}s}V_2\left(\bm{f}(\bm{b},a_2^{\star},\p_{a_2^{\star}},z)\right)\right)\right]\nonumber\\
&\leq\E_{(\p_{a_1^{\star}},\rb_{a_1^{\star}})\sim\mu_{a_1^{\star},2}^{\star}}\left[\sum_{s\in\mathcal{S}}b_s\left(r_{a_1^{\star}s}+\beta\sum_{z\in\mathcal{Z}}\bm{1}^{\top}{\bm{J}_z}\p_{a_1^{\star}s}V_1\left(\bm{f}(\bm{b},a_1^{\star},\p_{a_1^{\star}},z)\right)\right)\right]\nonumber\\
&\quad-\E_{(\p_{a_1^{\star}},\rb_{a_1^{\star}})\sim\mu_{a_1^{\star},2}^{\star}}\left[\sum_{s\in\mathcal{S}}b_s\left(r_{a_1^{\star}s}+\beta\sum_{z\in\mathcal{Z}}\bm{1}^{\top}{\bm{J}_z}\p_{a_1^{\star}s}V_2\left(\bm{f}(\bm{b},a_1^{\star},\p_{a_1^{\star}},z)\right)\right)\right]\nonumber\\
\label{eq:contraction1}&=\beta\E_{(\p_{a_1^{\star}},\rb_{a_1^{\star}})\sim\mu_{a_1^{\star},2}^{\star}}\Biggl[\sum_{s\in\mathcal{S}}b_s\sum_{z\in\mathcal{Z}}\bm{1}^{\top}{\bm{J}_z}\p_{a_1^{\star}s} \times\left(V_1\left(\bm{f}(\bm{b},a_1^{\star},z,\p_{a_1^{\star}})\right)-V_2\left(\bm{f}(\bm{b},a_1^{\star},\p_{a_1^{\star}},z)\right)\right)\Biggr].
\end{align}
\normalsize
The inequality follows that we replace the nature's optimal decision $\mu_{a_1^{\star},1}^{\star}$ for $V_1$ by $\mu_{a_1^{\star},2}^{\star}$, and replace the DM's optimal solution $a_2^{\star}$ for $V_2$ by $a_1^{\star}$. Then, by changing the difference between $V_1$ and $V_2$ to the absolute value of the difference, we have 
\scriptsize
\begin{align*}
\eqref{eq:contraction1}&\leq\beta\E_{(\p_{a_1^{\star}},\rb_{a_1^{\star}})\sim\mu_{a_1^{\star},2}^{\star}}\Biggl[\sum_{s\in\mathcal{S}}b_s\sum_{z\in\mathcal{Z}}\bm{1}^{\top}{\bm{J}_z}\p_{a_1^{\star}s}\times\left|V_1\left(\bm{f}(\bm{b},a_1^{\star},\p_{a_1^{\star}},z)\right)-V_2\left(\bm{f}(\bm{b},a_1^{\star},z,\p_{a_1^{\star}})\right)\right|\Biggr]\\
&\leq\beta\E_{(\p_{a_1^{\star}},\rb_{a_1^{\star}})\sim\mu_{a_1^{\star},2}^{\star}}\left[\sum_{s\in\mathcal{S}}b_s\sum_{z\in\mathcal{Z}}\bm{1}^{\top}{\bm{J}_z}\p_{a_1^{\star}s}\sup_{\bm{b}' \in\Delta(\mathcal{S})}\left|V_1(\bm{b}')-V_2(\bm{b}')\right|\right]\\
&=\beta\sup_{\bm{b}' \in\Delta(\mathcal{S})}\left|V_1(\bm{b}')-V_2(\bm{b}')\right|.
\end{align*}
\normalsize
The second inequality follows that we take the supremum for all belief states $\bm{b}'\in\Delta(\mathcal{S})$, and the last equality is because $\E_{(\p_{a_1^{\star}},\rb_{a_1^{\star}})\sim\mu_{a_1^{\star},2}^{\star}}\left[\sum_{s\in\mathcal{S}}b_s\sum_{z\in\mathcal{Z}}\bm{1}^{\top}{\bm{J}_z}\p_{a_1^{\star}s}\right]=1$.

The same result holds for the case where $\mathcal{L}V_1(\bm{b})<\mathcal{L}V_2(\bm{b})$. Thus, for any belief state value $\bm{b}$, it follows that
\begin{align*}
\left|\mathcal{L}V_1(\bm{b})-\mathcal{L}V_2(\bm{b})\right|\leq\beta\sup_{\bm{b}' \in\Delta(\mathcal{S})}\left|V_1(\bm{b}')-V_2(\bm{b}')\right|,
\end{align*}
and therefore,
\begin{align*}
\sup_{\bm{b} \in\Delta(\mathcal{S})}\left|\mathcal{L}V_1(\bm{b})-\mathcal{L}V_2(\bm{b})\right|\leq\beta\sup_{\bm{b}' \in\Delta(\mathcal{S})}\left|V_1(\bm{b}')-V_2(\bm{b}')\right|,
\end{align*}
yielding that $\mathcal{L}$ is a contraction under $0<\beta<1$. This completes the proof. %\qed
\end{proof}

\end{document}